\title{Brief Article}
\newtheorem{thm}{Theorem}[section]
\newtheorem{cor}[thm]{Corollary}
\newtheorem{lem}[thm]{Lemma}
\newtheorem{prop}[thm]{Proposition}
\newtheorem{theorem}{Theorem}
\theoremstyle{definition}
\newtheorem{definition}[thm]{Definition}
\newtheorem{conj}[thm]{Conjecture}
\newtheorem{example}[thm]{Example}
\newtheorem{rem}[thm]{Remark}
\numberwithin{equation}{section}
\DeclareMathOperator{\Iso}{Isom}
\DeclareMathOperator{\curv}{curv}
\DeclareMathOperator{\rank}{rank}
\DeclareMathOperator{\im}{Im}
\DeclareMathOperator{\Kdim}{Krdim} 
\DeclareMathOperator{\depth}{depth}
\DeclareMathOperator{\trdeg}{trdeg}
\DeclareMathOperator{\Susp}{Susp}
\DeclareMathOperator{\diag}{diag}
\DeclareMathOperator{\grad}{grad}
\DeclareMathOperator{\Ann}{Ann}
\DeclareMathOperator{\Supp}{Supp}
\newcommand{\QQ}{\mathbb{Q}}
\newcommand{\SU}{\mathrm{SU}}
\newcommand{\Sp}{\mathrm{Sp}}
\newcommand{\ZZ}{\mathbb{Z}}
\newcommand{\B}{{\mathbf{B}}}                                      
\newcommand{\E}{{\mathbf{E}}}                                      
\renewcommand{\colon}{\nobreak\mskip2mu\mathpunct{}\nonscript
  \mkern-\thinmuskip{:}\mskip6muplus1mu\relax}
  \subjclass[2010]{55N91(Primary), 57S10 (Secondary)}
\keywords{\noindent Alexandrov space, cohomogeneity one, Cohen--Macaulay module, equivariant cohomology}
\begin{document}



\title[Equivariant cohomology of Alexandrov spaces]{On the equivariant cohomology of\\ cohomogeneity one  Alexandrov spaces}

\author[M.~ Amann]{Manuel Amann}
\author[M.~Zarei]{Masoumeh Zarei}

\date{\today}





\begin{abstract}
We give a characterization of those Alexandrov spaces admitting a cohomogeneity one action of a compact connected Lie group $G$ for which the action is Cohen--Macaulay. This generalizes a similar result for manifolds to the singular setting of Alexandrov spaces where, in contrast to the manifold case, we find several actions which are not Cohen--Macaulay.  In fact, we present results in a slightly more general context.

We extend the methods in this field by a conceptual approach on equivariant cohomology via rational homotopy theory using an explicit rational model for a double mapping cylinder.
\end{abstract}

\maketitle
\setcounter{tocdepth}{1}




\section{Introduction}

It is the goal of this article to bring together Riemannian and metric geometry (in the form of cohomogeneity one Alexandrov spaces) with the concept of Cohen--Macaulay modules 
from commutative algebra. The connection will be established by considering the equivariant cohomology algebras of the cohomogeneity one actions and by analyzing their algebraic features. For this we present some new approach to the field by transcribing equivariant cohomology to the realm of rational homotopy theory and by using algebraic models for cohomogeneity one spaces. This finally permits concrete computations. As an outcome we shall characterize those cohomogeneity one $G$-actions on Alexandrov spaces whose equivariant cohomology is Cohen--Macaulay.

\vspace{5mm}

Equivariant cohomology $H_G^*(X;\QQ)$ is an elaborate tool to study transformation groups on CW-complexes $X$. Recall that it can be defined as the cohomology of the Borel construction $X_G:=X\times_G \E G $, i.e.~$H_G^*(X;\QQ)=H^*(X\times_G \E G)$. It is of special interest when it happens to be particularly ``simple''. Various notions of such ``simplicity'' can be found in the literature, maybe most prominently featuring the term ``equivariant formality''. The content of the latter is that as an $H^*(\B G;\mathbb{Q})$-module the equivariant cohomology $H_G^*(X;\mathbb{Q})$ splits as a product of the cohomology of $X$ and the classifying space of $G$, i.e.~$H_G^*(X;\mathbb{Q})\cong H^*(X;\mathbb{Q})\otimes H^*(\B G;\mathbb{Q})$. Equivalently, the Leray--Serre spectral sequence of the Borel fibration $X\hookrightarrow{} X_G \to \B G$ degenerates at the $E_2$-term. This arises in several situations, including geometrically relevant contexts like simply-connected compact K\"ahler manifolds or Hamiltonian torus actions. Moreover, by Chang--Skjelbred and Atiyah--Bredon this property in the case of torus actions allows to reconstruct equivariant cohomology from lower dimensional orbit strata.

However, by standard localisation results an equivariantly formal torus action necessarily comes with fixed points, and clearly, therefore excludes free actions. There have been several attempts in the literature to provide variations to this notion. One, presented in \cite{GoRo}, is the notion of a \emph{Cohen--Macaulay $G$-action}. Recall that a module over a ring is said to be Cohen--Macaulay if its dimension equals its depth (see Section~\ref{S:CM} for further details). This is a prominent and abundant concept in Algebraic Geometry which, for example, can be used to suitably generalize regular schemes, etc. In the graded context applied to the equivariant cohomology algebra $H_G^*(X;\mathbb{Q})$ considered as an $H^*(\B G;\mathbb{Q})$-module, it yields the definition of a Cohen--Macaulay $G$-action. Note that, in particular, this does comprise free $G$-actions on the one hand, and is readily implied by equivariant formality on the other hand.

In \cite{GoMa14, GoMa18} it was shown that homogeneous spaces and cohomogeneity one manifolds are Cohen--Macaulay. For this recall that a manifold is of cohomogeneity one (obviously generalising transitive $G$-actions) if it permits a smooth $G$-action with an orbit of codimension one.

We recall that due to the existence of a biinvariant metric on a compact Lie group together with a theorem by O'Neill homogeneous spaces are the prime examples of manifolds with non-negative sectional curvature. Moreover, under certain restrictions such non-negatively curved metrics were also found on large classes of cohomogeneity one manifolds, and these spaces recently have led to the discovery of new positively curved examples.

We remark that there has always been an intriguing and deep interplay between Riemannian manifolds with such lower curvature bounds and the existence of symmetries upon them. On many known examples isometry groups are rather large. Hence it seems reasonable to speculate about the topological nature of such actions of compact Lie group actions. Let us propose one conjecture in this direction.
\begin{conj}\label{conj}
Suppose a compact Lie group $G$ acts isometrically on the simply-connected manifold $(M,g)$ of positive sectional curvature. Then the action is equivariantly formal.
\end{conj}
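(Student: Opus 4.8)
The plan is to approach this via a reduction to torus actions followed by the geometry of positive curvature. For a maximal torus $T\subseteq G$ one has $H_G^*(M;\QQ)\cong H_T^*(M;\QQ)^{W}$ with $W$ the Weyl group, and rational equivariant formality of the $G$-action is equivalent to that of the $T$-action; so I would first replace $G$ by $T$. For a torus, equivariant formality amounts to the collapse of the Leray--Serre spectral sequence of $M\hookrightarrow M_T\to \B T$, which by localisation is equivalent to the numerical identity
\[
\dim_{\QQ} H^*(M^{T};\QQ)=\dim_{\QQ} H^*(M;\QQ)
\]
of total Betti numbers. Since the inequality $\le$ always holds, the entire problem reduces to producing enough fixed-point cohomology, i.e.\ to showing that none of the rational cohomology of $M$ is lost upon passing to $M^{T}$.

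The geometric input comes from the fact that each component $N$ of $M^{T}$ is a totally geodesic, and hence itself positively curved, submanifold of even codimension. The tools I would marshal are Berger's theorem, which in even dimensions already guarantees $M^{T}\ne\emptyset$; Wilking's connectedness principle, which forces the inclusions $N\hookrightarrow M$ to be highly connected and thereby identifies the cohomology of $M$ with that of the fixed components in a range of degrees; and Frankel's theorem, controlling the intersection pattern of distinct components. One would then try to assemble $H^*(M)$ from the fixed-point data of the circle subgroups of $T$ in the spirit of the Chang--Skjelbred/Atiyah--Bredon description recalled above, running an induction on codimension to establish the missing inequality $\ge$. A complementary and more tractable route is to stratify by symmetry rank: in high rank the positively curved manifolds $M$ are classified (spheres, projective spaces and the like), for which equivariant formality can be checked directly, leaving only the low-rank cases to the connectedness machinery.

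The main obstacle is that the conjecture is genuinely open and sits close to notoriously hard problems on positive curvature and symmetry, in particular the Hopf conjecture on the positivity of the Euler characteristic, to which the existence and size of fixed-point sets is tightly linked. A second, more elementary but essential caveat is that the statement cannot hold verbatim in odd dimensions: there Berger's theorem fails and free isometric circle actions occur, the Hopf action on the round $\sphere^3$ being the prototype --- its Borel construction has the rational cohomology of $\sphere^2$, so this action is \emph{not} equivariantly formal. Hence the real content of the conjecture lies in the even-dimensional case (or under the standing assumption of a fixed point), and a serious attempt should begin there; the crux is controlling the topology of the fixed-point set in positive curvature without the crutch of a large symmetry rank.
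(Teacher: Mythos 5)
There is no proof in the paper to compare against: the statement you were given is Conjecture~\ref{conj}, which the authors explicitly propose as an open problem. The paper offers only motivation: (i) equivariant formality of the $G$-action is equivalent to that of the restriction to a maximal torus $T\subseteq G$; (ii) \emph{conditionally} on the Hopf conjecture ($\chi(M)>0$ in even dimensions) and the Bott--Grove--Halperin conjecture (rational ellipticity), $H^*(M;\QQ)$ would be concentrated in even degrees, so the Leray--Serre spectral sequence of the Borel fibration would collapse for lacunary reasons; and (iii) the fixed-point obstruction would be removed by the Weinstein fixed-point theorem applied to a topological generator of $T$. Your proposal is likewise not a proof, as you say yourself, and the gap is exactly the open problem: the inequality $\dim_\QQ H^*(M^T;\QQ)\geq \dim_\QQ H^*(M;\QQ)$ to which your plan reduces the conjecture is precisely what nobody knows how to establish in general. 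Wilking's connectedness principle, Frankel's theorem and the symmetry-rank classifications yield it only in special situations (high rank, restricted cohomology), and no induction of the kind you sketch is known to close the general case, so this part of your text should be read as a research outline rather than an argument.

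Two further comments. First, your reduction differs from the paper's heuristic: you pass through the localization criterion on total Betti numbers of the fixed-point set, while the paper trades the geometric difficulty for two famous conjectures and a lacunary degeneration argument; both reductions are correct, and yours has the merit of being unconditional in formulation, but neither produces a proof. Second, and more importantly, your odd-dimensional caveat is correct and is sharper than the paper's own statement: the Hopf action of $S^1$ on the round $S^3$ is free and isometric, the manifold is simply connected and positively curved, and $H^*_{S^1}(S^3;\QQ)\cong H^*(S^2;\QQ)$ is a torsion $H^*(\B S^1;\QQ)$-module rather than a free module of rank $\dim H^*(S^3;\QQ)=2$, so this action is not equivariantly formal. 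This is a genuine counterexample to the conjecture as literally stated. The paper implicitly concedes the point---its Weinstein-theorem motivation only functions in even dimensions, and both the Remark following the conjecture and Proposition~\ref{prop:CM_ef} carry the proviso $\chi(X)\neq 0$ in the odd-dimensional case---but the conjecture itself is stated without any such restriction. Your conclusion is therefore the right one: the honest formulation requires an even-dimensionality (equivalently, fixed-point) hypothesis, and the genuine content of the problem lies there.
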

As one motivation for the conjecture recall that the action is equivariantly formal if and only if so is the induced action by the maximal torus $T\subseteq G$. Note further that if $M$ is even-dimensional, the Hopf conjecture speculates that $\chi(M)>0$. A confirmation of the Bott--Grove--Halperin conjecture would yield that $M$ is rationally elliptic, whence, both taken together, would imply that $H^*(M;\QQ)=H^\textrm{even}(M;\QQ)$ by the structure theory of such positively elliptic spaces. Since also $H^*(\B G, \QQ)$ is concentrated in even degrees, the spectral sequence of the Borel fibration then would degenerate at the $E_2$-term for lacunary reasons. Note further that the ``fixed-point-obstruction'' to equivariant formality in positive curvature is removed by the Weinstein fixed-point theorem applied to a topological generator of $T$.

We recall that a Cohen--Macaulay action with a fixed-point is known to be equivariantly formal whence the concepts agree for even dimensional positively curved manifolds by the Weinstein fixed-point theorem.

\vspace{5mm}

Alexandrov spaces are metric spaces generalising manifolds with a lower curvature bound basically using the purely metric Toponogov characterization of sectional curvature bounds as a definition. They are of particular importance as they close the category of manifolds with lower curvature bounds under Gromov--Hausdorff convergence or quotients of compact Lie group actions. Both their geometry and topology have undergone intense studies. Investigating their equivariant cohomology, however, still seems to be a rather new and interesting field. In particular, we may consider cohomogeneity one Alexandrov spaces defined in complete analogy to the manifold setting.

Several properties from non-negatively/positively curved manifolds can be transferred to Alexandrov spaces. Hence it is natural to think about the above conjecture within the setting of positively curved Alexandrov spaces. Here, however, one finds an immediate easy counterexample provided by the standard cohomogeneity one action of $\mathbf{SU}(3)$ on the spherical suspension $\Susp(W^7_{1, 1})$ of $W^7_{1, 1}$. It is easy to see that this action is actually not even Cohen--Macaulay (see Page \pageref{PageEx}).

We can motivate Conjecture \ref{conj} further by actually showing that in order to prove equivariant formality we only need to prove the Cohen--Macaulay property.
\begin{rem}
If $X$ is a positively curved cohomogeneity one Alexandrov space, then $X$ is Cohen--Macaulay if and only if it is equivariantly formal provided that $\chi(X)\neq 0$ in the case when $\dim X$ is odd---see Proposition \ref{prop:CM_ef}.
\end{rem}

This further provides good motivation to analyze which cohomogeneity one Alexandrov spaces actually are Cohen--Macaulay.  Here, let us point out that we can actually do this for a larger class of spaces which contains cohomogeneity one Alexandrov spaces. Before stating the theorem (which then can directly be applied to Alexandrov spaces) in this larger context  let us establish this more general framework of \emph{generalized cohomogeneity one spaces}.


In order to define such a ``generalized cohomogeneity one space'' let $(G, H, K^-, K^+)$ be a quadruple of compact Lie groups such that $G$ is connected and $H\subseteq K^{\pm}\subseteq G$. Let 

\begin{align}\label{Eq:decomposition_}
X = G\times_{K^-}C(K^-/H)\bigcup_{G/H} G\times_{K^+}  C(K^+/H),
\end{align}
where $C(K^{\pm}/H)$ is a closed cone over $K^{\pm}/H$. Then there is an action of $G$ on $X$ with  three orbit types $G/K^-$, $G/H$ and $G/K^+$, and such that the  orbit space is a  closed interval. The orbits of types $G/K^-$ and $G/K^+$ are mapped to the boundary of the orbit space  under the natural projection map, and the orbits of type $G/H$ are mapped to the interior points of the orbit space. We call the $G$-space $X$ a \emph{generalized cohomogeneity one space with group diagram $(G, H, K^-, K^+)$}. 

Note that due to the known double mapping cylinder decomposition of closed simply-connected cohomogeneity one Alexandrov spaces, we obtain Splitting \eqref{Eq:decomposition_} with $G$ the group acting by cohomogeneity one, $K^\pm$ the singular isotropy groups, and $H$ the principal isotropy group.

In this situation the fact that $K^\pm/H$ are positively curved homogeneous spaces--- in view of the classification of the latter (see \cite{WZ})---yields $\rank K^\pm -\rank H\leq 1$. It follows that the next theorem is directly applicable to cohomogeneity one Alexandrov spaces $X$.


\begin{theorem}\label{T:Main_THM_A}
Let $X$ be a generalized cohomogeneity one space with group diagram \linebreak[4]$(G, H, K^-, K^+)$, where $K^{\pm}/H$ are connected homogeneous spaces of positive dimension and    $$\max \{\rank K^-, \rank K^+\}\leq \rank H+1.$$  Moreover, suppose that the classifying spaces of the isotropy groups  $H$, $K^-$, and $K^+$ are Sullivan  spaces.
Then $H^*_G(X, \QQ)$ is a Cohen--Macaulay $H^*(\B G, \QQ)$-module  if and only if one of
the following statements holds.
\begin{enumerate}
\item  $\rank H=\rank K^-=\rank K^+$.
\item  $\rank H< \max \{\rank K^-, \rank K^+\}$ and $\im H^*(\B\iota^{-})+\im H^*(\B\iota^{+})=H^{*}(\B H, \QQ)$.
\end{enumerate}
\end{theorem}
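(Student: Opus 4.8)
**The plan is to compute $H^*_G(X,\QQ)$ explicitly via a rational model for the double mapping cylinder and then read off the Cohen--Macaulay condition from the module structure over $H^*(\B G,\QQ)$.** Applying the Borel construction to Splitting \eqref{Eq:decomposition_} turns the double mapping cylinder into a homotopy pushout of Borel constructions, so that $X_G$ is the double mapping cylinder of $\B K^- \leftarrow \B H \to \B K^+$, where the maps are induced by the inclusions $\iota^{\pm}\colon H\hookrightarrow K^{\pm}$. (Here the cone factors $C(K^{\pm}/H)$ contract $G$-equivariantly onto the singular orbits $G/K^{\pm}$, and $G/H$ Borel-constructs to $\B H$.) The first step is therefore to make precise the Mayer--Vietoris / homotopy-pushout argument and to invoke the explicit rational model for a double mapping cylinder promised in the abstract, producing a cochain model for $H^*_G(X,\QQ)$ whose cohomology sits in the Mayer--Vietoris long exact sequence
\begin{align}\label{Eq:MV_plan}
\cdots \to H^*_G(X) \to H^*(\B K^-)\oplus H^*(\B K^+) \xrightarrow{\;\Delta\;} H^*(\B H) \to \cdots,
\end{align}
where $\Delta(a,b)=H^*(\B\iota^-)(a)-H^*(\B\iota^+)(b)$. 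All these cohomologies are $H^*(\B G,\QQ)$-modules via the classifying maps $\B K^{\pm}\to\B G$ and $\B H\to\B G$, and \eqref{Eq:MV_plan} is a sequence of $H^*(\B G,\QQ)$-modules.

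\emph{Second step: identify the relevant commutative-algebra invariants.} Because the classifying spaces $\B H,\B K^{\pm}$ are assumed to be Sullivan spaces, their rational cohomologies are polynomial rings, so $H^*(\B K^{\pm})$ and $H^*(\B H)$ are free (hence Cohen--Macaulay) modules over $H^*(\B G,\QQ)$ of dimension (Krull dimension equals depth) equal to $\rank K^{\pm}$ and $\rank H$ respectively, viewed through the restriction maps. The rank hypothesis $\max\{\rank K^-,\rank K^+\}\le \rank H+1$ controls these dimensions tightly. From \eqref{Eq:MV_plan} one extracts two short exact sequences of $H^*(\B G,\QQ)$-modules: one expressing $\operatorname{coker}\Delta$ as a submodule of $H^*(\B H)$ (the part of $H^*(\B H)$ not hit), and one expressing $\ker\Delta$, with $H^*_G(X)$ sitting in an extension built from $\ker\Delta$ and $\operatorname{coker}\Delta$ shifted by one degree. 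The Cohen--Macaulay property of $H^*_G(X)$ must then be decided by the depth/dimension arithmetic of these pieces, using the standard depth lemma (the depth of the middle term of a short exact sequence is controlled by the depths of the outer terms) together with the fact that $\dim H^*_G(X)=\dim H^*(\B G,\QQ)$-support $=\max\{\rank K^-,\rank K^+\}$.

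\emph{Third step: the case split.} In case (1), $\rank H=\rank K^-=\rank K^+$, all three modules have the same dimension over $H^*(\B G,\QQ)$, the map $\Delta$ has the expected generic behaviour, and a direct depth computation should show $H^*_G(X)$ is Cohen--Macaulay of full dimension. In case (2), the strict inequality $\rank H<\max\{\rank K^-,\rank K^+\}$ forces a dimension drop, and here the surjectivity condition $\im H^*(\B\iota^-)+\im H^*(\B\iota^+)=H^*(\B H,\QQ)$ is exactly what makes $\Delta$ surjective, killing $\operatorname{coker}\Delta$ and eliminating the low-dimensional torsion that would otherwise destroy maximal depth; conversely, failure of surjectivity produces a summand of strictly smaller dimension inside $H^*_G(X)$, so that $\dim>\depth$ and the module is not Cohen--Macaulay. \textbf{The main obstacle I expect is the homological bookkeeping in Step two and three:} correctly tracking depth through the connecting homomorphism of \eqref{Eq:MV_plan}, since $H^*_G(X)$ is not simply a sub- or quotient-module but an extension, and the degree shift in the Mayer--Vietoris sequence means one must argue that $\ker\Delta$ and $\operatorname{coker}\Delta$ contribute compatibly. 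Verifying that the surjectivity condition in case (2) is both necessary and sufficient---rather than merely sufficient---will require a careful converse argument showing that any nontrivial cokernel forces an associated prime of coheight $\ge 1$, thereby dropping the depth below the dimension.
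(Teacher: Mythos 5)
Your plan has a genuine gap in the ``if'' direction: the Mayer--Vietoris-plus-depth-lemma bookkeeping you propose cannot establish the Cohen--Macaulay property in either case of the theorem. In case (1) your map $\Delta$ need not be surjective: for $\Susp(S^2)$ with diagram $(\SO(3),\SO(2),\SO(3),\SO(3))$ both restriction maps are $p\mapsto t^2$, so $\operatorname{coker}\Delta\neq 0$; then $H^*_G(X)$ is an extension of $\ker\Delta$ by a shift of $\operatorname{coker}\Delta$, a quotient of $H^*(\B H,\QQ)$ over whose depth you have no control, and no depth count closes---yet the theorem asserts Cohen--Macaulayness here, and the paper proves it by invoking \cite[Corollary~4.3]{GoRo} (actions all of whose isotropy groups have equal rank are Cohen--Macaulay), a substantive external theorem resting on entirely different arguments. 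In case (2) with mixed ranks, say $\rank K^-=\rank H+1$ and $\rank K^+=\rank H$, surjectivity of $\Delta$ does give
\begin{equation*}
0\to H^*_G(X)\to H^*(\B K^-,\QQ)\oplus H^*(\B K^+,\QQ)\to H^*(\B H,\QQ)\to 0,
\end{equation*}
but the depth lemma only yields $\depth H^*_G(X)\geq\min\{\rank H,\rank H+1\}=\rank H$, one short of the required dimension $\rank H+1$, because the summand $H^*(\B K^+,\QQ)$ caps the depth of the middle term at $\rank H$; regrouping (e.g.\ $0\to\ker H^*(\B\iota^-)\to H^*_G(X)\to H^*(\B K^+,\QQ)\to 0$ when $H^*(\B\iota^-)$ is onto) hits the same wall. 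Closing this subcase needs the structural input the paper assembles: injectivity of $H^*(\B\iota^+)$ at equal rank (Lemma~\ref{L:Equal_rank_injectivity}), evenness of $H^*(K^+/H,\QQ)$ (Proposition~\ref{P:Cohomology_Homogeneous_Sp}), and the finer fiber-product analysis adapted from \cite[Theorem~1.1]{GoMa14}. Separately, your claim that $H^*(\B K^\pm,\QQ)$ and $H^*(\B H,\QQ)$ are \emph{free} $H^*(\B G,\QQ)$-modules is false (take $H=\{1\}\subset G=\SU(2)$); what the Sullivan hypothesis actually buys (Corollary~\ref{C_Sullivan_Model_of_BH}) is that they are polynomial rings, hence Cohen--Macaulay modules by change of rings (Lemma~\ref{L: Changing_the_rings}).

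By contrast, your ``only if'' half is essentially sound and follows a route genuinely different from the paper's. Since the $H^*(\B G,\QQ)$-module structure on $H^*(\B K^\pm,\QQ)$ factors through the restriction $H^*_G(X)\to H^*_G(G/K^\pm)=H^*(\B K^\pm,\QQ)$, annihilators are nested and $\Kdim H^*_G(X)\geq\max\{\rank K^\pm\}$; if $\rank H<\max\{\rank K^\pm\}$ and $\im H^*(\B\iota^-)+\im H^*(\B\iota^+)\neq H^*(\B H,\QQ)$, then the image of the connecting homomorphism is a nonzero \emph{submodule} (not ``summand'') of $H^*_G(X)$ isomorphic to a degree shift of $\operatorname{coker}\Delta$, hence of dimension at most $\rank H$; unmixedness of Cohen--Macaulay modules (every associated prime has coheight equal to the dimension) then gives the contradiction outright, with no need for your ``careful converse argument'' beyond this. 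This replaces the paper's cochain-level proof---explicit even-degree zero divisors in $H(\tilde D)$ (Lemma~\ref{L:Zero-divisor_in_even_degree_2}) together with the comparison of regular sequences between $H(\tilde D)$ and $H^{\textrm{even}}(\tilde D)$ via integral extensions---by a shorter module-theoretic one, but it salvages only half of the theorem.
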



\bigskip

For the definition of ``Sullivan spaces'' see Definition \ref{D:Sullivan_Space}. Note that the classifying spaces are automatically Sullivan if the respective Lie groups are connected.

\begin{rem}
The result is sharp according to the following two ways of looking at it: 
\begin{itemize}
\item
First, the condition on rank is necessary, as the example of the generalized cohomogeneity one Alexandrov space given by the group diagram $(T^n, 1,S^1,T^n)$  with $\rank K^+ -\rank H=n\geq 2$ shows (see Example \ref{E:Corank}).
%
%
\item Second, the condition $\im H^*(\B\iota^{-})+\im H^*(\B\iota^{+})=H^{*}(\B H, \QQ)$ can occur even if none of the morphisms $H^*(\B\iota^{\pm})$ is surjective as an example with group diagram $(Sp(1)\times Sp(1)\times Sp(1), S^1\times S^1, S^1\times Sp(1)\times Sp(1), Sp(1) \times S^1\times Sp(1))$ shows (see Example \ref{E:Surjectivity}).
\end{itemize}
\end{rem}

Already in the category of Alexandrov spaces we can realize the failure of the Cohen--Macaulay property for most normal fibres $K^\pm/H$:
\begin{theorem}\label{T:Main_THM_B}
Choose  positively curved homogeneous spaces $F^+$ and $ F^-$  such that $F^\pm$ can be written as a quotient of compact Lie groups whose classifying  spaces are Sullivan spaces. Then there exists a cohomogeneity one Alexandrov space with group diagram $(G, H, K^{-}, K^{+})$---such that the classifying spaces $\B H$, $\B K^\pm$ are  Sullivan spaces with  $F^\pm\approx K^{\pm}/H$---which is \emph{not} Cohen--Macaulay
 if and only if
\begin{itemize}
\item
$\rank H<\max\{\rank K^{-}, \rank K^{+}\}$, and
\item
$K^{\pm}/H\in \{W^{7}_{p, q}/\Gamma, B^{13}, M^\textrm{even}\}$,
\end{itemize}
where, $M^\textrm{even}$ is an even-dimensional positively curved homogeneous space, and $W^7_{p, q}/\Gamma$ is a positively curved homogeneous space whose universal covering  space is the   Aloff--Wallach space  $W^7_{p, q}$ and $B^{13}$ is the $13$-dimensional Berger space.
\end{theorem}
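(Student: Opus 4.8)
The plan is to derive Theorem~\ref{T:Main_THM_B} from the algebraic criterion of Theorem~\ref{T:Main_THM_A}. Any admissible diagram $(G,H,K^-,K^+)$ realizing $F^\pm\approx K^\pm/H$ automatically meets the rank bound $\max\{\rank K^-,\rank K^+\}\le \rank H+1$, so Theorem~\ref{T:Main_THM_A} tells us that such a realization fails to be Cohen--Macaulay exactly when $\rank H<\max\{\rank K^-,\rank K^+\}$ and $\im H^*(\B\iota^-)+\im H^*(\B\iota^+)\subsetneq H^*(\B H,\QQ)$. Since the latter can only hold if \emph{both} restriction maps $H^*(\B K^\pm,\QQ)\to H^*(\B H,\QQ)$ are non-surjective, my task is to decide for which pairs $(F^+,F^-)$ one can choose a common isotropy group $H$ and overgroups $K^\pm$ with $F^\pm\approx K^\pm/H$ realizing both features simultaneously.

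First I would record the Euler-characteristic dichotomy: a positively curved homogeneous space $K/H$ (with the above rank bound) is even-dimensional iff $\chi>0$ iff $\rank K=\rank H$, and odd-dimensional iff $\chi=0$ iff $\rank K=\rank H+1$. Hence the corank-one condition of the first bullet is equivalent to at least one $F^\pm$ being odd-dimensional, while even fibres enter as the class $M^{\mathrm{even}}$. To control the odd, corank-one fibres I would use the Borel fibration $K/H\to \B H\xrightarrow{\B\iota}\B K$: as $H$ is connected, $H^*(\B H,\QQ)$ is polynomial and concentrated in even degrees, so whenever $H^{\mathrm{even}}(K/H,\QQ)=\QQ$ the only even-degree survivors of the spectral sequence sit in the base row, forcing $\B\iota^*$ to be onto. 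This at once discards odd spheres and the Berger space $B^7=\SO(5)/\SO(3)$ — all rational homology spheres, the latter having $\SO(3)$ irreducibly embedded so that its degree-$4$ generator is hit — and shows each such fibre imposes the Cohen--Macaulay property on its own.

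The only odd positively curved homogeneous spaces with $H^{\mathrm{even}}(\cdot,\QQ)\neq\QQ$ are then the Aloff--Wallach spaces $W^7_{p,q}=\SU(3)/S^1$ (and their quotients $W^7_{p,q}/\Gamma$) and the Berger space $B^{13}=\SU(5)/(\Sp(2)\cdot S^1)$; for these I would compute $\B\iota^*$ directly and observe that, the overgroup being semisimple while $H$ carries a central circle, the image omits precisely the degree-$2$ class $H^2(\B H,\QQ)=\QQ$ of that circle. For the ``only if'' direction this closes the argument: failure of the Cohen--Macaulay property forces corank one and non-surjectivity of both restriction maps, so the odd fibres lie in $\{W^7_{p,q}/\Gamma,B^{13}\}$ and the even fibre, if present, is an arbitrary $M^{\mathrm{even}}$. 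For the ``if'' direction I would build, for each admissible pair, an explicit diagram realizing the failure: keep the odd fibre with its semisimple overgroup and choose the opposite $K^\mp$ so that $H^*(\B K^\mp,\QQ)\to H^*(\B H,\QQ)$ likewise misses the central degree-$2$ class — e.g.\ $K^\mp$ semisimple with the circle of $H$ non-central in $K^\mp$, as in the pair $W^7_{p,q}=\SU(3)/S^1$ against $S^2=\SU(2)/S^1$. One then verifies that such diagrams yield genuine cohomogeneity one Alexandrov spaces, that the classifying spaces are Sullivan (automatic for connected groups, and hypothesized to accommodate the finite quotients recorded by $\Gamma$), and that the combined image omits the degree-$2$ class, so that Theorem~\ref{T:Main_THM_A} certifies non-Cohen--Macaulayness.

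The principal obstacle is twofold. Computationally, it lies in pinning down the restriction maps $H^*(\B K,\QQ)\to H^*(\B H,\QQ)$ for the exceptional spaces — in particular the non-standard embeddings $\SO(3)\hookrightarrow\SO(5)$ and $\Sp(2)\cdot S^1\hookrightarrow\SU(5)$, together with the rational cohomology of $B^{13}$ — and in confirming that the missing central direction cannot be recovered from the opposite side. Structurally, the difficulty is the bookkeeping of the shared isotropy group $H$: a given $M^{\mathrm{even}}$ appears in a non-Cohen--Macaulay example only if it shares its isotropy group with an admissible odd partner, and it is exactly the conjunction of the two bullets applied to both fibres that excises the incompatible cases — for instance the Cayley plane $\mathrm{F}_4/\mathrm{Spin}(9)$, whose sole corank-one partner over $H=\mathrm{Spin}(9)$ is the sphere $S^9\notin\{W^7_{p,q}/\Gamma,B^{13},M^{\mathrm{even}}\}$, so that the second bullet fails and no non-Cohen--Macaulay realization can exist.
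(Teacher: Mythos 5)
Your ``only if'' half is essentially the paper's argument: by Theorem~\ref{T:Main_THM_A}, a non-Cohen--Macaulay realization forces $\rank H<\max\{\rank K^-,\rank K^+\}$ together with $\im H^*(\B\iota^-)+\im H^*(\B\iota^+)\subsetneq H^*(\B H,\QQ)$, hence both restriction maps are individually non-surjective; the surjectivity criterion (the paper's Lemma~\ref{L:rational_sphere_surjectivity}, which you would reprove by a spectral sequence/Gysin argument) combined with the classification of positively curved homogeneous spaces then puts each fibre in $\{W^7_{p,q}/\Gamma, B^{13}, M^{\textrm{even}}\}$. That half is sound.

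The ``if'' (existence) half has a genuine gap, and your closing paragraph shows the approach as conceived cannot prove the stated theorem. The theorem asserts that for \emph{every} pair $(F^-,F^+)$ from the list with at least one odd-dimensional member there \emph{exists} a non-Cohen--Macaulay realization; you must therefore exhibit, for each such pair, a common $H$, overgroups $K^\pm$ with $K^\pm/H\approx F^\pm$, and an ambient $G$, and then certify that the sum of images is not surjective. You defer this ``bookkeeping'' and, worse, you deduce from it that the Cayley plane $\mathrm{F}_4/\mathrm{Spin}(9)$ admits no non-Cohen--Macaulay realization at all --- which contradicts the very statement you are proving (take $F^-$ the Cayley plane and $F^+=W^7_{p,q}$: both bullets hold). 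Your error is to insist that $H$ be the isotropy group of the standard presentation of $M^{\textrm{even}}$. The key idea you are missing is the paper's spherical join: by Proposition~\ref{P:Join_Action}, $X=G_1/H_1\ast G_2/H_2$ is a cohomogeneity one Alexandrov space with diagram $(G_1\times G_2,\,H_1\times H_2,\,G_1\times H_2,\,H_1\times G_2)$, so the shared isotropy group is the \emph{product} $H_1\times H_2$ (for the Cayley plane paired with $W^7_{p,q}$ one takes $H=\mathrm{Spin}(9)\times S^1$), and every admissible pair is realized uniformly. Moreover, even granted a realization, your certificate of non-surjectivity is wrong in this setting: in the join diagram the sum of images contains $H^2(\B H_1,\QQ)\otimes 1+1\otimes H^2(\B H_2,\QQ)$, i.e.\ \emph{all} of degree $2$, so no degree-$2$ class can be the obstruction. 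Since two non-surjective maps can perfectly well have surjective sum (the paper's Example~\ref{E:Surjectivity}), non-surjectivity of the sum genuinely requires the paper's Proposition~\ref{P:Join}: choosing $t_\beta$ and $s_\alpha$ outside the respective images and of \emph{minimal degree}, a word-length argument shows that the product $s_\alpha t_\beta$ (of degree at least $4$) escapes $\im H^*(\B\iota^-)+\im H^*(\B\iota^+)$. Without the join construction and this argument, the existence direction of the theorem remains unproved.
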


\begin{rem}
We remark that in order to prove this result we do extend known models for homogeneous spaces $G/H$ of compact Lie groups to the case that $G$ is connected and $\B H$ is a Sullivan space (yet $H$ not necessarily connected). In particular, we prove that odd-dimensional positively curved homogeneous spaces are rationally nilpotent (see Proposition \ref{P:non-simply-connected_homogeneous_space_Rationally_Nilpotent}).
\end{rem}

From the latter observation on odd-dimensional positively curved homogeneous spaces one directly derives that they are all Sullivan spaces, in particular. We do not know, however, if this implies that in Theorems \ref{T:Main_THM_A}  and \ref{T:Main_THM_B} we can drop the condition that $\B H$, $\B K^-$, and $\B K^+$ be Sullivan  spaces.

As a consequence of Theorem \ref{T:Main_THM_A} we can generalize the positive result that cohomogeneity one actions on manifolds are Cohen--Macaulay to orbifolds.

\begin{theorem}\label{T:Cohom_1_Orbifolds}
Let $X$ be a closed simply-connected  smooth orbifold and let $G$ be a compact connected Lie group
which acts on   $X$ by cohomogeneity one with  a group diagram $(G, H, K^-, K^+)$, where the classifying spaces of the isotropy groups  $H$, $K^-$, and $K^+$ are Sullivan  spaces. Then $H^*_G(X, \QQ)$ is a Cohen--Macaulay $H^*(\B G)$-module.
\end{theorem}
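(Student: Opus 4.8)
The plan is to deduce this directly from Theorem~\ref{T:Main_THM_A}. Since $X$ is a closed simply-connected cohomogeneity one orbifold, its orbit space is a closed interval (a circle would force a surjection $\pi_1(X)\twoheadrightarrow\ZZ$), and the orbifold slice theorem yields exactly the double mapping cylinder decomposition \eqref{Eq:decomposition_} with group diagram $(G,H,K^-,K^+)$, the cones $C(K^\pm/H)$ now being orbifold cones over the normal fibres. The decisive structural input is that, in contrast to the manifold case where $K^\pm/H$ are genuine spheres, for a smooth orbifold the normal fibres $K^\pm/H$ are \emph{spherical space forms}, i.e.\ quotients $S^{l_\pm}/\Gamma^\pm$ of round spheres by finite groups acting freely by isometries. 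In particular each $K^\pm/H$ is a connected positively curved homogeneous space, so the hypotheses of Theorem~\ref{T:Main_THM_A} on the $K^\pm/H$ are met, and, exactly as for Alexandrov spaces, positive curvature forces $\rank K^\pm-\rank H\le 1$, so that the rank hypothesis of Theorem~\ref{T:Main_THM_A} holds automatically. (The degenerate case of an exceptional orbit, where the normal fibre collapses to a finite set, is harmless: there $\rank K^\pm=\rank H$ and $H^*(\B\iota^\pm)$ is a rational isomorphism, so it fits either alternative below.)

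First I would record the rational consequence of being a spherical space form: a free isometric quotient satisfies $H^*(S^l/\Gamma;\QQ)\cong H^*(S^l;\QQ)^{\Gamma}$. For odd $l$ the free action is orientation-preserving, so $K^\pm/H$ is a rational cohomology sphere with $\chi(K^\pm/H)=0$; for even $l$ the quotient is $S^{l}$ or $\RP^{l}$, in either case with $\chi(K^\pm/H)>0$. Combining this with the equal-rank characterization of positive Euler characteristic for homogeneous spaces gives a clean dichotomy for each side:
\begin{itemize}
\item if $K^\pm/H$ is even-dimensional, then $\chi(K^\pm/H)>0$ and $\rank K^\pm=\rank H$;
\item if $K^\pm/H$ is odd-dimensional, then $\chi(K^\pm/H)=0$ and $\rank K^\pm=\rank H+1$.
\end{itemize}

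The second key step is to show that in the odd case the restriction $H^*(\B\iota^\pm)\colon H^*(\B K^\pm,\QQ)\to H^*(\B H,\QQ)$ is \emph{surjective}. For this I would run the Serre spectral sequence of the fibration $K^\pm/H\to \B H\to \B K^\pm$. Since $K^\pm/H\simeq_\QQ S^{2n+1}$, the only possibly nonzero differential transgresses the fundamental class to an element $\tau\in H^{2n+2}(\B K^\pm,\QQ)$. If $\tau=0$ the spectral sequence collapses at $E_2$ and $H^*(\B H,\QQ)$ is free of rank two over $H^*(\B K^\pm,\QQ)$, so its Krull dimension would equal $\rank K^\pm$, contradicting $\rank H=\rank K^\pm-1$; hence $\tau\neq0$. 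As $H^*(\B K^\pm,\QQ)$ is an integral domain (a ring of invariants of a polynomial ring under the finite group $\pi_0(K^\pm)$), $\tau$ is a non-zero-divisor, the spectral sequence collapses at the next page, and the edge homomorphism identifies $H^*(\B H,\QQ)\cong H^*(\B K^\pm,\QQ)/(\tau)$. In particular $H^*(\B\iota^\pm)$ is onto, so $\im H^*(\B\iota^\pm)=H^*(\B H,\QQ)$.

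It then remains to combine these facts with Theorem~\ref{T:Main_THM_A}. If both normal fibres are even-dimensional we are in alternative~(1), namely $\rank H=\rank K^-=\rank K^+$, and $X$ is Cohen--Macaulay. Otherwise at least one side, say $K^+/H$, is odd-dimensional, whence $\max\{\rank K^-,\rank K^+\}=\rank H+1>\rank H$ and, by the surjectivity above, $\im H^*(\B\iota^+)=H^*(\B H,\QQ)$; a fortiori $\im H^*(\B\iota^-)+\im H^*(\B\iota^+)=H^*(\B H,\QQ)$, which is alternative~(2). In either case Theorem~\ref{T:Main_THM_A} yields that $H^*_G(X,\QQ)$ is a Cohen--Macaulay $H^*(\B G)$-module, as claimed. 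I expect the main obstacle to be the first structural step---rigorously extracting from the orbifold slice data that the normal fibres $K^\pm/H$ are spherical space forms, hence rational cohomology spheres---together with the care needed in the surjectivity argument when the isotropy groups $K^\pm$ are disconnected, where the local coefficient system on $H^{2n+1}(K^\pm/H;\QQ)$ and the transgression must be controlled; this is precisely where the Sullivan hypotheses on $\B K^\pm$ and $\B H$ and the paper's rational model of the double mapping cylinder do the work.
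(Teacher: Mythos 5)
Your proposal is correct, and its skeleton is exactly the paper's: invoke the orbifold structure theorem of \cite{Gonzalez} to see that the normal fibres $K^\pm/H$ are spherical space forms, hence positively curved homogeneous spaces with $\rank K^\pm-\rank H\le 1$, and then feed the resulting rank dichotomy into Theorem~\ref{T:Main_THM_A}, using alternative (1) when both fibres are even-dimensional and alternative (2) otherwise. The genuine difference lies in how you establish surjectivity of $H^*(\B\iota^\pm)$ when a fibre is odd-dimensional. The paper quotes Proposition~\ref{P:non-simply-connected_homogeneous_space_Rationally_Nilpotent} (odd-dimensional space forms are rationally odd spheres, via the same Lefschetz orientation argument you sketch) and then Lemma~\ref{L:rational_sphere_surjectivity}, whose Sullivan-model induction gives surjectivity with no rank hypothesis at all; you instead re-prove this special case by a classical Borel-type transgression argument in the Serre spectral sequence of $K^\pm/H\to \B H\to \B K^\pm$: if the transgression $\tau$ vanished, $H^*(\B H,\QQ)$ would be a rank-two free module over $H^*(\B K^\pm,\QQ)$, forcing $\rank H=\rank K^\pm$ and contradicting corank one, and since $H^*(\B K^\pm,\QQ)$ is a domain the sequence degenerates afterwards with $H^*(\B H,\QQ)\cong H^*(\B K^\pm,\QQ)/(\tau)$. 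This buys a self-contained, more elementary argument at the price of (a) using the corank-one hypothesis, which the paper's lemma does not need, and (b) the local-coefficient issue for disconnected $K^\pm$, which you flag but defer; it is settled precisely by the reduction the paper itself uses, namely Corollary~\ref{C_Sullivan_Model_of_BH}, replacing $(K^\pm,H)$ by $(K_0^\pm, K_0^\pm\cap H)$ without changing the fibre $K^\pm/H$, so that the base of your fibration becomes simply connected and the coefficients are constant. One small caution: Theorem~\ref{T:Main_THM_A} is stated for \emph{connected} fibres of positive dimension, so your parenthetical dismissal of exceptional orbits (finite $K^\pm/H$) is not literally an application of it; the cleaner remark is that for a closed simply-connected $X$ this case cannot occur, which is also what the paper tacitly relies on.
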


This allows us to provide the following schematic diagram of cohomogeneity one spaces. On the upper half we specify the cohomogeneity one space, on the lower half the fibres $K^\pm/H$. For the inner two shells the cohomogeneity one action is known to be Cohen--Macaulay due to \cite{GoMa14} and \cite{GoMa18}. Under mild technical assumptions we hence provide this for orbifolds, and first examples of non-Cohen--Macaulay actions can be found in the outer shell of Alexandrov spaces.
\begin{center}
\includegraphics[width=1.0\textwidth]{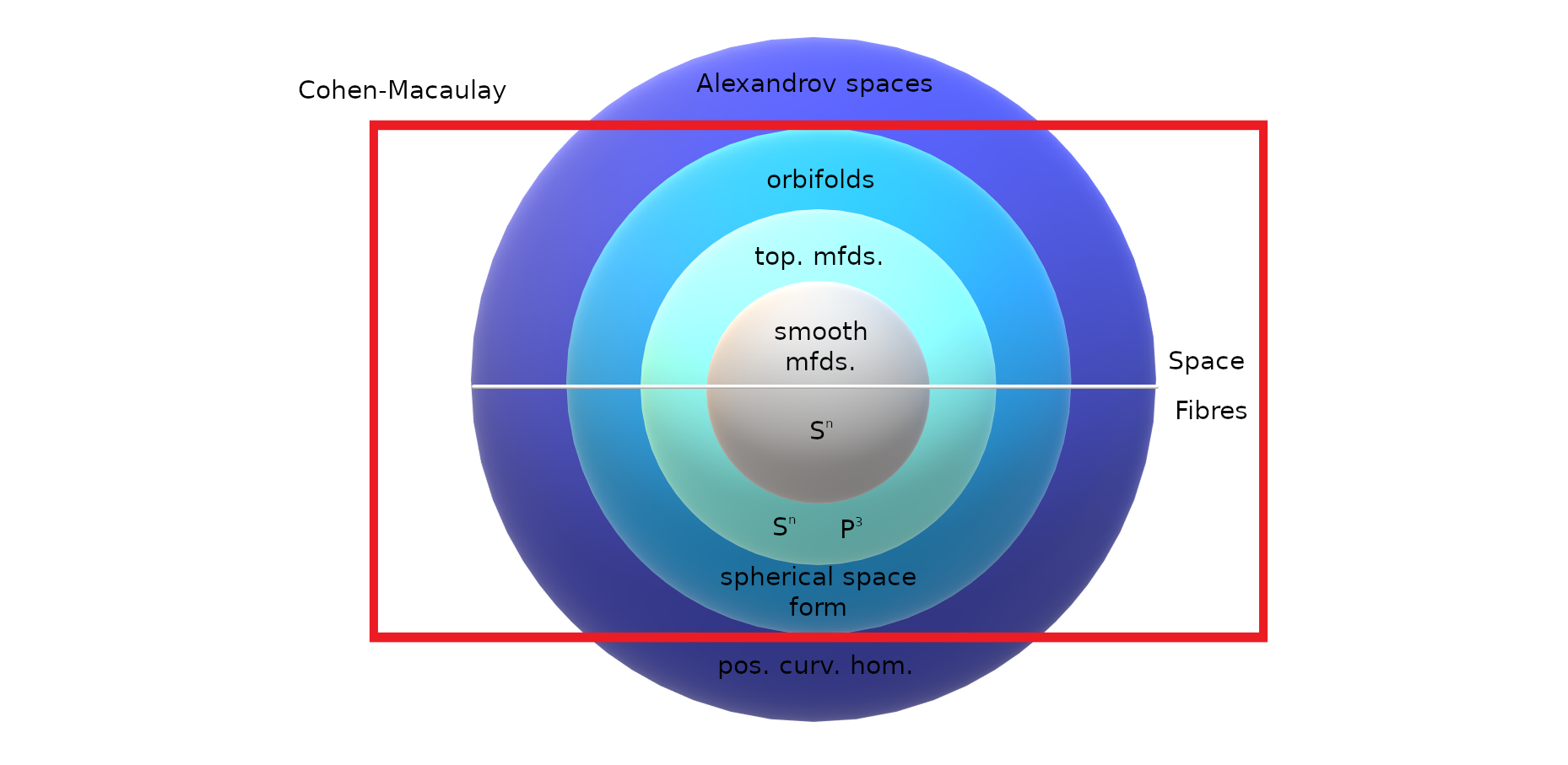}
\end{center}

Note that in the Riemannian setting cohomogeneity one actions are special cases of \emph{hyperpolar} actions. In \cite{GoHaghMa} the authors generalize the results to hyperpolar actions on symmetric spaces of compact type. More generally, in \cite{CaGoHeMa} the authors provide a formula to compute
the  equivariant cohomology ring of a double mapping cylinder. Then they apply the formula to compute the equivariant cohomology ring of a cohomogeneity one action of a compact Lie group $G$ on a manifold $M$ with a group diagram $(G, H, K^{-}, K^{+})$ imposing an orientability  condition on the fibrations $BK^{\pm}\to BH$.

\vspace{5mm}

Let us quickly sketch the principal ideas for proving our main  Theorems. 
 In analogy to the manifold case, a cohomogeneity one Alexandrov space  with group diagram $(G,H, K^-,K^+)$ admits a decomposition as a double mapping cylinder, i.e.~as two mapping cylinders over the singular orbits which we glue at the common principal orbit $G/H$.   The double mapping cylinder decomposition gives us the platform to carry out the proof. It is worth mentioning that for the  group diagram $(G,H, K^-,K^+)$ the spaces  $K^-/H$ and $K^+/H$ are positively curved homogeneous Alexandrov spaces and hence positively curved homogeneous spaces (see Theorem \ref{T:ALEX_STRUCTURE}). For their classification upon which our proof builds see \cite{WZ} . In particular, this classification yields that $\dim K^\pm/H \mod 2= \rank K^\pm-\rank H$. This places the Alexandrov spaces in the depicted setting of generalized cohomogeneity one spaces with rank restrictions in Theorem \ref{T:Main_THM_A}.

The upshot of the proof is to identify when the induced morphism \linebreak[4]$H^*(\B K^ \pm,\QQ) \to H^*(\B H,\QQ)$ is injective respectively surjective, and, in particular, to connect this, respectively the property when their sum is surjective,  to being Cohen--Macaulay.
On the one hand, we draw on the known techniques,  which we adapt to the setting of generalized cohomogeneity one respectively Alexandrov spaces, in order to present sufficient conditions for equivariant cohomology to be Cohen--Macaulay. On the other hand, we draw new ideas and techniques from rational homotopy theory which provide us with a concrete understanding of the Cohen--Macaulay property. This then allows us to give tailored arguments by which we can decide whether this property holds or not.

More precisely, the double mapping cylinder decomposition of $X$ gives a rational model for equivariant cohomology. It enables us to get control on the structure of equivariant cohomology, in particular, to rule out certain homogeneous spaces as candidates for the normal fibers of a generalized cohomogeneity one space. We point out that it is one neat feature of the proof that it is then indeed possible to see explicitly how regular sequences and prime ideals are related and interact in these situations.

As for the proof of Theorem \ref{T:Main_THM_B} we additionally draw on the join construction for positively curved Alexandrov spaces in order to construct the depicted examples.

We hope that this new explicit description of equivariant cohomology will come in handy for many further problems in this area and may constitute an additional helpful toolset.

\vspace{3mm}

\noindent\textbf{Structure of the article.}
In Section \ref{S:Alexandrov_spaces}  we recall the definition of generalized cohomogeneity one spaces and of Alexandrov spaces. We further review some basic facts about cohomogeneity one Alexandrov spaces. In Section \ref{S:Sullivan_spaces} we collect the relevant information about rationally nilpotent and Sullivan spaces needed for our arguments. The algebraic model for the equivariant cohomology of a generalized cohomogeneity one space, which is the cornerstone of the proof of Theorem~\ref{T:Main_THM_A}, is presented in Section~\ref{S_ Sullivan_model_for_double_mapping_cylinder}. In Section~\ref{S:CM}  we recall the definitions and basic facts of Cohen--Macaulay modules. Section~\ref{S:Proof} is  devoted to the proof of the main theorems. In Section~\ref{S:PC} we turn our attention to positively curved cohomogeneity one Alexandrov spaces and show that when the Euler characteristic of the space in nonzero, equivariant formality and the Cohen--Macaulay property agree.

\vspace{3mm}

\noindent\textbf{Acknowledgements.}
The authors want to express their gratitude to Steve Halperin for helpful discussions on rational nilpotence.

The first named author was supported both by a Heisenberg grant and his research grant AM 342/4-1 of the German Research Foundation. The second named author was also funded by DFG research grant AM 342/4-1. Both authors are 
associated to the DFG Priority Programme 2026, ``Geometry at Infinity''.

\section{Cohomogeneity one  actions
}
\label{S:Alexandrov_spaces}

In this section we first recall some basics about cohomogeneity one actions and, in a second step, focus on Alexandrov spaces equipped with such actions.


We first recall the definition of a generalized cohomogeneity one action. Let $(G, H, K^-, K^+)$ be a quadruple of compact Lie groups such that $G$ is connected and $H\subseteq K^{\pm}\subseteq G$. Let 

\begin{align}\label{Eq:decomposition_2}
X = G\times_{K^-}C(K^-/H)\bigcup_{G/H} G\times_{K^+}  C(K^+/H),
\end{align}
where $C(K^{\pm}/H)$ is a closed cone over the respective homogeneous space $K^{\pm}/H$. Then there is an action of $G$ on $X$ with  three orbit types $G/K^-$, $G/H$ and $G/K^+$, and such that the  orbit space is a  closed interval. The orbits of types $G/K^-$ and $G/K^+$ are mapped to the boundary of the orbit space  under the natural projection map, and the orbits of type $G/H$ are mapped to the interior points of the orbit space. We call the $G$-space $X$ a \emph{generalized cohomogeneity one space with group diagram $(G, H, K^-, K^+)$}.
Note that by \eqref{Eq:decomposition_2}, the space $X$ can be written as a double mapping cylinder of the following maps 

\begin{align*}
K^{\pm}/H\to G/H\to G/K^{\pm}.
\end{align*}


\begin{definition}
A length space $(X, d)$ of finite (Hausdorff) dimension has \emph{curvature bounded from below} by $k$, denoted by $\curv(X)\geq k$, if every point $x\in X$ has a (sufficiently small) neighborhood $U$ such that, for any collection
of four different points $(x_0, x_1, x_2, x_3)$ in $U$, the following condition holds:
\begin{equation*}
\tilde{\angle}_k{x_1x_0x_2} + \tilde{\angle} _k{x_2x_0x_3} + \tilde{\angle} _k{x_3x_0x_1}  \leq 2\pi.
\end{equation*}
Here, $\tilde{\angle} _k{x_ix_0x_j} $, called the \textit{comparison angle}, is the angle at $\tilde{x}_0$ in the geodesic triangle in $M^2_k$, the simply-connected Riemannian $2$-manifold with constant curvature $k$, with  vertices \linebreak $(\tilde{x}_0, \tilde{x}_i, \tilde{x}_j )$ such that  $d(x_0, x_i)=d(\tilde{x}_0, \tilde{x}_i)$, $d(x_0, x_j)=d(\tilde{x}_0, \tilde{x}_j)$ and $d(x_j, x_i)=d(\tilde{x}_j, \tilde{x}_i)$.
An \emph{Alexandrov space} is a complete length space with finite (Hausdorff) dimension and curvature bounded from below by $k$ for some $k\in \mathbb{R}$.
\end{definition}
Note that in this case Hausdorff dimension  is actually a non-negative integer.

If $(M, g)$ is a complete Riemannian manifold with sectional curvature bounded from below by $k$, then it follows from Toponogov's Theorem that $(M, d)$ is an Alexandrov space with $\curv((M,g))\geq k$, where $d$ is the distance function on $M$ induced by $g$. Therefore, Alexandrov spaces are a synthetic generalization of complete Riemannian manifolds with lower sectional curvature bounds.

The \emph{space of directions} of a general Alexandrov space $X^n$ of dimension $n$ at a point $x$ is, by definition, the completion of the space of \emph{geodesic directions} at $x$. Recall that two geodesics emanating from $x$ define the same geodesic direction if the angle between them is zero (see \cite[Page~100]{Burago}).
We will denote it by $\Sigma_xX^n$. It is a compact Alexandrov space of dimension $n-1$ with curvature bounded from below by $1$.

For an $n$-dimensional Alexandrov space $X$, Fukaya and Yamaguchi proved in \cite[Theorem 1.1]{FY} that $\Iso(X)$, the isometry group of $X$, is a Lie group. Moreover, if $X$ is compact and connected, then $\Iso(X)$ is compact  (see \cite[Page~370, Satz I]{DW}).
As in the Riemannian case, the maximal dimension of $\Iso(X)$ is $n(n+1)/2$ and, if equality holds, $X$ must be isometric to a Riemannian manifold (see \cite[Theorems 3.1 and 4.1]{GGG}).


In analogy to locally smooth actions (see \cite[Ch.~IV, Section 3]{Bredon}) for an isometric action of a compact Lie group $G$ on an Alexandrov space $X$ there also exists a maximal orbit type $G/H$  (see \cite[Theorem 2.2]{GGG}). This orbit type is the \emph{principal orbit type} and orbits of this type are called \emph{principal orbits}. A non-principal orbit is \emph{exceptional} if it has the same dimension as a principal orbit. If it has strictly lower dimension, it is called \emph{singular}.


\vspace{5mm}

Now we collect some basic facts on cohomogeneity one Alexandrov spaces. For more details we refer the reader to \cite{GS} or \cite{GGZAlex}.


\begin{definition}
Let $G$ be a compact connected Lie group which acts isometrically on an Alexandrov space  $X$. Let $G(x)$ be an orbit of $X$. We define the \emph{normal space of directions} to $G(x)$, denoted by $S_{x}^{\perp}$ as follows
\begin{align*}
S_{x}^{\perp} =\{ v \in \Sigma_x X \mid d(v,w) = \pi/2\,\,\,  \text{for all } w\in S_{x}\},
\end{align*}
where $S_{x}$ is the unit tangent space to the orbit $G(x)$.
\end{definition}

Now we turn our attention to cohomogeneity one Alexandrov spaces and recall their structure. 

Recall that the orbit space $X/G$ of an Alexandrov space $X$ by an isometric action of a group $G$ with closed orbits is again an Alexandrov space (see \cite[Proposition 10.2.4]{Burago}).
\begin{definition}
Let $X$ be a connected $n$-dimensional Alexandrov space with an isometric action of a compact connected Lie group  $G$. The action is of \emph{cohomogeneity one} if the orbit space is one-dimensional or, equivalently, if the principal orbit is of dimension $n-1$. We call a connected Alexandrov space with an isometric action of cohomogeneity one a \emph{cohomogeneity one Alexandrov space}.
\end{definition}

Since one-dimensional Alexandrov spaces are topological manifolds, the orbit space of a cohomogeneity one Alexandrov space is homeomorphic to a connected $1$-manifold (possibly with boundary). If $X$ is a closed Alexandrov space, i.e.~compact without boundary, it  must be either a circle or a closed interval. When the orbit space is homeomorphic to $[-1,1]$, there are three types of isotropy groups:
by the Isotropy Lemma (see \cite[Lemma 2.1]{GGG}) and the fact that principal orbits are open and dense the orbits corresponding to the interior of $[-1,1]$ are all of the form $G/H$ up to conjugation of $H$, the \emph{principal isotropy group}. The non-principal orbits corresponding to $\pm 1$ are of the form $G/K^{\pm}$ with exceptional respectively singular isotropy groups $K^{\pm}$. It follows that $K^-\supseteq H\subseteq K^+$ and the \emph{group diagram} $(G,H,K^-,K^+)$ formed by groups and inclusions then---whenever it can be realized as a cohomogeneity one Alexandrov space---uniquely determines its homeomorphism type. (See Theorem \ref{T:ALEX_STRUCTURE} for a characterization of when it can be realized). Note further that $(G,H,K^-,K^+)$ and $(G,H,K^+,K^-)$ are $G$-equivariantly homeomorphic.

Note further that $H$ is indeed a proper subgroup of $K^\pm$. For this it suffices to observe that if $\dim K^\pm=\dim H$, then $K^\pm\neq H$. Actually, in this case, the normal space of directions $S^\perp$ satisfies $S^\perp = S^0$ with a transitive action of $K^\pm$ with isotropy $H$. Hence $K^\pm/H = S^0$, which shows that $K^\pm \neq H$.

The following theorem determines the structure of closed cohomogeneity one Alexandrov spaces with orbit space an interval.


\begin{thm}[\protect{\cite[Theorem A]{GS}}]
\label{T:ALEX_STRUCTURE}
Let $X$ be a closed Alexandrov space with an effective isometric $G$-action of cohomogeneity one with principal isotropy $H$ and orbit space homeomorphic to $[-1,1]$. Then $X$ is the union of two
fiber bundles over the two singular orbits whose fibers are closed cones over positively curved homogeneous  spaces, that is,
\begin{align}\label{Eq:decomposition}
X = G\times_{K^-}C(K^-/H)\bigcup_{G/H} G\times_{K^+}  C(K^+/H).
\end{align}
 The group diagram of the action is given by $(G, H,  K^-, K^+)$, where $K^\pm/H$ are positively curved homogeneous spaces.
 Conversely, a group diagram $(G, H,  K^-, K^+)$, where $K^\pm/H$ are positively curved homogeneous spaces, determines a cohomogeneity one Alexandrov space (uniquely determined up to homeomorphism).
\end{thm}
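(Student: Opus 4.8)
The plan is to reduce the global structure of $X$ to the local structure around the two singular orbits, which in turn is governed by the slice theorem for isometric actions on Alexandrov spaces. Let $\pi\colon X\to X/G\cong[-1,1]$ be the orbit projection. By the isotropy lemma (and the density of principal orbits) the interior points of $[-1,1]$ all correspond to principal orbits $G/H$, while the two endpoints correspond to the non-principal orbits $G/K^{\pm}$. The first task is to describe a $G$-invariant tubular neighbourhood of each singular orbit; the second is to glue the two neighbourhoods along a common principal orbit.

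First I would invoke the slice theorem: for $x\in X$ with isotropy $G_x$, a $G$-invariant neighbourhood of the orbit $G(x)$ is equivariantly homeomorphic to the associated bundle $G\times_{G_x}C(S_x^{\perp})$, where $S_x^{\perp}$ is the normal space of directions and $C(\cdot)$ the closed cone. The crucial point is to identify $S_x^{\perp}$ at a singular point $x\in G/K^{\pm}$. Here one uses the join splitting $\Sigma_xX = S_x * S_x^{\perp}$ of the space of directions into the unit tangent sphere $S_x$ of the orbit and the normal part $S_x^{\perp}$; since spaces of directions carry $\curv\geq 1$, the factor $S_x^{\perp}$ inherits $\curv\geq 1$. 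The residual action of $K^{\pm}=G_x$ on $S_x^{\perp}$ has a single orbit, because the slice $C(S_x^{\perp})$ maps onto a half-open interval in $X/G$ with each concentric sphere $r\cdot S_x^{\perp}$ collapsing to one orbit-space point; hence $K^{\pm}$ acts transitively. Identifying the stabiliser of a direction pointing towards a nearby principal orbit $G/H$ with $K^{\pm}\cap H = H$, I obtain $S_x^{\perp}\approx K^{\pm}/H$. Being a homogeneous Alexandrov space with $\curv\geq 1$, this is a Riemannian manifold, i.e.\ a positively curved homogeneous space.

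With this in hand the global decomposition is immediate: $\pi^{-1}([-1,0])$ is a $G$-invariant tube around $G/K^{-}$, hence equals $G\times_{K^{-}}C(K^{-}/H)$; symmetrically $\pi^{-1}([0,1])=G\times_{K^{+}}C(K^{+}/H)$; and the two tubes meet exactly along $\pi^{-1}(0)=G/H$. This yields the double mapping cylinder \eqref{Eq:decomposition}, and since the bundle data are canonically read off the diagram, $(G,H,K^{-},K^{+})$ determines the equivariant homeomorphism type. For the converse I would run the construction backwards: given positively curved homogeneous $K^{\pm}/H$, form the two cone bundles, equip each with a $G$-invariant metric of curvature bounded below---using that the Euclidean cone over a space with $\curv\geq 1$ has $\curv\geq 0$ together with an O'Neill/connection-metric argument on the associated bundle---and glue them along the isometric common boundary $G/H$. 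The Alexandrov property of the result then follows from Perelman's gluing theorem, and by construction the orbit space is $[-1,1]$, so the action is of cohomogeneity one.

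The main obstacle I anticipate lies in the converse: producing a genuine $G$-invariant Alexandrov metric with a \emph{uniform} lower curvature bound on the glued space, where one must simultaneously control the invariant metrics on the two cone bundles and check that the boundary identifications meet the hypotheses of the gluing theorem. On the forward side the delicate step is the clean identification of $S_x^{\perp}$ with the homogeneous space $K^{\pm}/H$, which rests on the join splitting of spaces of directions and on the fact that a homogeneous Alexandrov space is a manifold.
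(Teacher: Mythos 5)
This theorem is not proved in the paper at all: it is imported verbatim from \cite[Theorem A]{GS}, so the only proof your attempt can be measured against is the one of Galaz-Garcia and Searle.

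Your forward direction is essentially their argument in its modern, streamlined form, and the key steps are sound: the orbit space of the slice $C(S_x^{\perp})$ is the cone over $S_x^{\perp}/K^{\pm}$ and is a half-open interval with the apex at the closed end, which indeed forces $S_x^{\perp}/K^{\pm}$ to be a point; the stabiliser of a normal direction is principal; and a homogeneous Alexandrov space with $\curv\geq 1$ is a positively curved Riemannian homogeneous space (this is Berestovskii's theorem, which you use without naming). Two caveats. First, the equivariant Slice Theorem you invoke for Alexandrov spaces postdates \cite{GS} (it is due to Harvey and Searle); in \cite{GS} the local cone-bundle structure near the singular orbits is precisely what must be, and is, established by hand, so for cohomogeneity one actions the Slice Theorem is most of the content of the statement rather than an off-the-shelf tool. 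Second, passing from the small slice tube to the whole half $\pi^{-1}([-1,0])$ silently uses that the principal part is a trivial $G/H$-bundle over an interval; standard, but a step.

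The genuine gap is in the converse, exactly where you flag it yourself. The two halves $G\times_{K^{\pm}}C(K^{\pm}/H)$, metrized as quotients of $(G,Q)\times C(K^{\pm}/H,g_{\pm})$ with $Q$ biinvariant and $g_{\pm}$ invariant of $\curv\geq 1$, do carry lower curvature bounds --- by the quotient theorem for Alexandrov spaces \cite[Proposition~10.2.4]{Burago}, not by an O'Neill argument, since these spaces are not manifolds --- but their boundaries, though both $G$-homeomorphic to $G/H$, carry \emph{different} Cheeger-type submersion metrics. There is no reason for these to be isometric, so the gluing theorem (Petrunin's, in the non-doubling case; Perelman proved doubling) cannot be applied ``along the isometric common boundary'': its hypothesis fails for the natural metrics, and arranging it is the real work of the converse. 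The repair is to use the cone/quotient metric only on a small invariant neighbourhood of each singular orbit (where the non-manifold points sit and where the quotient construction supplies the bound), observe that the rest of each half is a smooth $G$-manifold $G/H\times I$ on which the $G$-invariant Riemannian metrics form a convex set, and interpolate on each half to a common product collar $(G/H,h)\times[0,\epsilon]$ for one fixed invariant metric $h$; compactness bounds the curvature below on the smooth part, globalization gives the Alexandrov condition on each half, and the two halves then glue along genuinely isometric (indeed product) collars. Without an argument of this kind the existence half of the theorem remains unproved.
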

We remark that this can equivalently be phrased using that a map $f\colon X\to Y$  between two topological spaces up to homotopy can be deformed to the inclusion $X\hookrightarrow{} M_f\simeq Y$ into the mapping cylinder at time $0$. The gluing above then corresponds to gluing the respective two mapping cylinders of the bundles $G/H \to G/K^\pm$ at time $1$ in order to obtain the so-called \emph{double mapping cylinder} (see \cite{GroveHalperin}).

We further remark that  we may refer to $K^{\pm}/H$ as the \emph{singular normal fibers}.

As Theorem \ref{T:ALEX_STRUCTURE} demonstrates,  cohomogeneity one Alexandrov spaces with group diagrams $(G, H,  K^-, K^+)$ are indeed  special instances of generalized cohomogeneity one spaces. 

\begin{rem}
As we stated already in the introduction, the theory of closed cohomogeneity one Alexandrov spaces extends the one of closed cohomogeneity one manifolds.
Indeed,  recall that the structure theorem \ref{T:ALEX_STRUCTURE} we presented for Alexandrov spaces has its original counterpart in the category of smooth manifolds (where group diagrams actually determine cohomogeneity one manifolds up to diffeomorphism). That is, given a group diagram with normal fibers smooth spheres we may realize it as a cohomogeneity one manifold. Furthermore, by choosing standard metrics we do this in such a way that the normal spheres are round whence they are positively curved. Thus, up to $G$-equivariant homeomorphism, such a closed cohomogeneity one manifold is a cohomogeneity one Alexandrov space by Decomposition \eqref{Eq:decomposition}.

The analog holds true when replacing manifolds by orbifolds (see \cite{Gonzalez}), i.e.~singular normal fibers are no longer necessarily smooth spheres but spherical space forms.
\end{rem}

\vspace{5mm}

Positively curved homogeneous spaces originally have been classified by Wallach and B\'erard--Bergery (see \cite{WZ} for a modern, self-contained, complete proof). In the simply-connected case these are classified to come out of the list of compact rank one symmetric spaces, flag manifolds $W^6$, $W^{12}$, $W^{24}$, Aloff--Wallach spaces $W^{7}_{p,q}$ and Berger spaces $B^7$ and $B^{13}$. Whilst $B^7$ is rationally a sphere, note that $W^7_{p,q}$ has the rational type of $S^2\times S^5$ and $B^{13}$ is rationally equivalent to $\mathbb{CP}^2\times S^9$ (see Section \label{S:Sullivan_spaces} for the definition of rational type).

We conclude this section by  special examples of  cohomogeneity one Alexandrov spaces that we need later in the article.


\begin{definition}[Suspension action]
Let $(X,d)$ be an Alexandrov space with curvature bounded from below by $1$. Define the \emph{spherical suspension} over $(X,d)$ by $\Susp(X):= \frac{X\times [0,\pi]}{\sim}$ (where $\sim$ contracts $X\times \{0\}$ and $X\times \{\pi\}$ to a point respectively) together with the positively curved suspension metric.

Let $G$ be a Lie group which acts isometrically on an  Alexandrov space $X$.  
We call the action of $G$ on  $\Susp(X)$ defined by 
\[
g\cdot [(x, t)]=[(gx, t)].
\]
the \textit{suspension action}.
\end{definition}


\begin{prop}\cite[Proposition~2.27]{GGZAlex}\label{P:Suspension-action}
Let  $G$ act transitively on a positively curved homogeneous space $M$ with isotropy group $H$.  Then the suspension action of  $G$ on  $\Susp(M)$ is of cohomogeneity one with diagram
$(G,  H,  G,  G)$. Conversely, a cohomogeneity one action of  $G$ with the above group diagram, and $G/H$  a positively curved  homogeneous space,  is  equivariantly homeomorphic to the  suspension action of  $G$ on $\Susp(G/H)$.
\end{prop}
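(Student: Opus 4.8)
The plan is to establish the forward direction by directly reading off the orbit structure of the suspension action, and then to obtain the converse from the uniqueness clause of the structure theorem (Theorem \ref{T:ALEX_STRUCTURE}). Throughout I use that $M=G/H$ is a \emph{positive-dimensional} positively curved homogeneous space, so that after rescaling we may assume $\curv(M)\geq 1$ and the spherical suspension $\Susp(M)$ is a positively curved Alexandrov space with $\curv\geq 1$. Since $G$ acts by isometries on $M$ and trivially on the $[0,\pi]$-coordinate, while the suspension metric is built solely from the metric of $M$ together with the standard structure on $[0,\pi]$, the suspension action $g\cdot[(x,t)]=[(gx,t)]$ is isometric and is therefore a genuine candidate for a cohomogeneity one action.

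For the forward direction I would analyze the orbits according to the value of the suspension parameter $t$. The two suspension points, i.e.~the images of $M\times\{0\}$ and $M\times\{\pi\}$, are fixed by $G$, so each is an orbit of type $G/G$; since $\dim M>0$ these are strictly lower dimensional than the generic orbits and are hence singular. For $t\in(0,\pi)$ a point is uniquely represented by $(x,t)$, and its orbit is $\{[(gx,t)]:g\in G\}=M\times\{t\}\cong G/H$, with isotropy the stabilizer $G_x$, a conjugate of $H$. Thus the interior orbits are principal with principal isotropy $H$. The $G$-invariant map $[(x,t)]\mapsto t$ has exactly the orbits as its fibers, so it descends to a homeomorphism $\Susp(M)/G\cong[0,\pi]$; in particular the orbit space is a closed interval and the action is of cohomogeneity one. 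Reading off the isotropy groups yields precisely the group diagram $(G,H,G,G)$, with singular normal fibers $K^\pm/H=G/H=M$ positively curved, as required.

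For the converse, suppose $G$ acts by cohomogeneity one with group diagram $(G,H,G,G)$ and $G/H$ positively curved. By Theorem \ref{T:ALEX_STRUCTURE} this diagram determines the cohomogeneity one Alexandrov space up to $G$-equivariant homeomorphism through the decomposition \eqref{Eq:decomposition}. Since the forward direction shows that the suspension action on $\Susp(G/H)$ realizes exactly this diagram---and indeed $G\times_G C(G/H)=C(G/H)$ makes \eqref{Eq:decomposition} literally the union of two cones $C(M)\cup_M C(M)$, which is the suspension---the two $G$-spaces are equivariantly homeomorphic.

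The only genuinely delicate points are, first, ensuring that the uniqueness supplied by Theorem \ref{T:ALEX_STRUCTURE} is $G$-equivariant rather than merely topological; this is built into the gluing description \eqref{Eq:decomposition}, which is manifestly $G$-equivariant and depends only on the diagram. Second, one must check that the poles are honestly singular, i.e.~that $\dim G/H>0$, so that the two fixed points are neither principal nor exceptional orbits; this is immediate under the standing hypothesis that $M$ is a positive-dimensional positively curved homogeneous space. The remaining verifications—isometry of the action and the homeomorphism of the orbit space onto $[0,\pi]$—are routine.
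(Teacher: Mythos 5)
Your proof is correct, but note that there is nothing in the paper to compare it against: the proposition is quoted verbatim from \cite[Proposition~2.27]{GGZAlex}, and no proof is reproduced in this article. Your route---reading off the orbit structure and the projection $[(x,t)]\mapsto t$ to identify the orbit space with $[0,\pi]$ for the forward direction, then invoking the structure theorem (Theorem \ref{T:ALEX_STRUCTURE}) for the converse---is exactly the standard argument, and is in substance how the cited reference proceeds. The one genuinely delicate point, upgrading the phrase ``uniquely determined up to homeomorphism'' in Theorem \ref{T:ALEX_STRUCTURE} to a $G$-equivariant identification, you handle correctly: the first half of that theorem identifies $X$ \emph{as a $G$-space} with the decomposition \eqref{Eq:decomposition}, which for $K^\pm=G$ collapses to $G\times_G C(G/H)\cup_{G/H}G\times_G C(G/H)=C(G/H)\cup_{G/H}C(G/H)=\Susp(G/H)$ carrying precisely the suspension action. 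Your side remark that $\dim G/H>0$ is needed for the poles to be singular (rather than exceptional) orbits is also the right caveat, since the curvature hypothesis alone does not exclude the degenerate zero-dimensional case.
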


\begin{definition}
Let $(X,d_X)$, $(Y,d_Y)$ be two Alexandrov spaces with curvature bounded from below by $1$.
The \emph{(topological) join} of  $X$ and $Y$ is the space
\[
X\ast Y = (X\times Y\times [0, \pi/2])/\sim,
\]
where $(x_1, y_1, t_1)\sim (x_2, y_2,  t_2)$, if and only if $t_1=t_2=0$ and $x_1=x_2$ or $t_1=t_2=\pi/2$ and $y_1=y_2$. We endow $X\ast Y$ with a metric defined by
\[
\cos (d([x_1, y_1, t_1], [x_2, y_2,  t_2]))=\cos t_1\cos t_2\cos d_{X}(x_1, x_2)+\sin t_1\sin t_2\cos d_{Y}(y_1, y_2).
\]
The space $(X\ast Y,d)$ is the \emph{spherical join} of $(X,d_X)$ and $(X,d_Y)$ and  is an Alexandrov space with $\curv\geq 1$.
\end{definition}


Let $G_1$ and $G_2$ be two Lie groups which act on  Alexandrov spaces $X_1$ and $X_2$, respectively. The  action of $G_1\times G_2$ on  $X_1\ast X_2$ is called \textit{join action}, if $G_1\times G_2$ acts on  $X_1\ast X_2$ naturally, i.e.
\[
(g_1, g_2)\cdot [(x, y, t)]=[(g_1x, g_2y, t)].
\]


\begin{prop}\cite[Proposition~2.28]{GGZAlex}\label{P:Join_Action}
If two Lie groups $G_1$ and $G_2$ act transitively on  positively curved homogeneous spaces $M_1$ and $M_2$ with isotropy groups $H_1$ and $H_2$, respectively, then the join action of  $G=G_1\times G_2$ on  $M_1\ast M_2$ is of cohomogeneity one with the following diagram:
\[
(G_1\times G_2,  H_1\times H_2,  G_1\times H_2,  H_1\times G_2).
\]
Conversely, a cohomogeneity one action of  $G_1\times G_2$ with the above group diagram, and $G_i/H_i$  positively curved  homogeneous spaces, for $i=1,2$, is  equivalent  to the  join action of  $G$ on $(G_1/H_1)\ast (G_2/H_2)$.
\end{prop}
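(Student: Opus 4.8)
The plan is to establish the forward direction by a direct analysis of the orbit structure of the join action, and then to deduce the converse from the uniqueness part of the structure theorem (Theorem~\ref{T:ALEX_STRUCTURE}). Write $M_i = G_i/H_i$ and fix base points $x_0\in M_1$, $y_0\in M_2$ whose isotropy groups are $H_1$ and $H_2$. First I would observe that the join action is isometric: the spherical join metric depends on the $M_i$-coordinates only through $d_{M_1}$ and $d_{M_2}$ and fixes the parameter $t$, and since each $G_i$ acts by isometries on $M_i$ the distance formula is preserved by every $(g_1,g_2)$. By the definition of the spherical join, $M_1\ast M_2$ is an Alexandrov space with $\curv\geq 1$, so we are genuinely in the setting of cohomogeneity one Alexandrov spaces.

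The crux is then the following computation. Consider the $G$-invariant map $\pi\colon M_1\ast M_2\to[0,\pi/2]$, $[(x,y,t)]\mapsto t$; it is well defined because the gluing identifies only points of equal $t$. Transitivity of each $G_i$ on $M_i$ shows that every level set $\pi^{-1}(t)$ is a single $G$-orbit, so $\pi$ descends to a homeomorphism of the (compact Hausdorff) orbit space onto $[0,\pi/2]$; in particular the action is of cohomogeneity one with orbit space a closed interval. It remains to read off the isotropy groups. For $t\in(0,\pi/2)$ no gluing occurs, so $(g_1,g_2)$ fixes $[(x_0,y_0,t)]$ exactly when $g_1x_0=x_0$ and $g_2y_0=y_0$; hence the principal isotropy is $H_1\times H_2$ and the principal orbit is $M_1\times M_2$. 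At $t=0$ the whole factor $M_2$ is collapsed, so $[(x_0,y_0,0)]$ is fixed precisely when $g_1x_0=x_0$ with $g_2$ arbitrary, giving singular isotropy $H_1\times G_2$ and singular orbit $G/(H_1\times G_2)\cong M_1$; symmetrically, at $t=\pi/2$ the factor $M_1$ is collapsed, giving isotropy $G_1\times H_2$ and orbit $M_2$. This is exactly the group diagram $(G_1\times G_2, H_1\times H_2, G_1\times H_2, H_1\times G_2)$, the labeling of the two singular isotropy groups being immaterial up to $G$-equivariant homeomorphism. As a consistency check the normal fibers are $(G_1\times H_2)/(H_1\times H_2)\cong M_1$ and $(H_1\times G_2)/(H_1\times H_2)\cong M_2$, which are positively curved by hypothesis, matching decomposition \eqref{Eq:decomposition}.

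For the converse I would invoke Theorem~\ref{T:ALEX_STRUCTURE}: since $M_1$ and $M_2$ are positively curved homogeneous spaces, so are the normal fibers $K^\pm/H$, whence the group diagram $(G_1\times G_2, H_1\times H_2, G_1\times H_2, H_1\times G_2)$ determines a cohomogeneity one Alexandrov space uniquely up to $G$-equivariant homeomorphism. The forward direction exhibits the join action as one realization of this diagram, so any cohomogeneity one action of $G_1\times G_2$ carrying the same diagram is equivalent to it. I expect the main obstacle to be the endpoint bookkeeping in the isotropy computation---correctly tracking which join factor collapses at $t=0$ and at $t=\pi/2$, and hence which of $G_1$ and $G_2$ enters each singular isotropy group. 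A secondary technical point is that Theorem~\ref{T:ALEX_STRUCTURE} presupposes an effective action, so one should first reduce to the (almost) effective case by dividing out the ineffective kernel, which acts trivially and alters neither the orbit space nor the quotient diagram.
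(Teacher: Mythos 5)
Your proposal is correct, but note that the paper itself contains no proof of Proposition~\ref{P:Join_Action}: the statement is imported verbatim from \cite[Proposition~2.28]{GGZAlex}, so there is no internal argument to compare against. Your direct proof---verifying that the join action is isometric on the Alexandrov space $M_1\ast M_2$ of $\curv\geq 1$, showing via the $G$-invariant parameter map $[(x,y,t)]\mapsto t$ that the orbit space is homeomorphic to $[0,\pi/2]$, and reading off the isotropy groups $H_1\times H_2$ over interior points and $H_1\times G_2$, $G_1\times H_2$ over the endpoints---is the standard route and is essentially the argument of the cited source. Three small points deserve attention. First, your computation produces the singular groups in the opposite order from the statement ($K^-=H_1\times G_2$ at $t=0$, $K^+=G_1\times H_2$ at $t=\pi/2$); as you observe, this is harmless since the paper records that $(G,H,K^-,K^+)$ and $(G,H,K^+,K^-)$ yield $G$-equivariantly homeomorphic spaces. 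Second, and more substantively, your converse requires uniqueness of the realization up to \emph{$G$-equivariant} homeomorphism (since ``equivalent'' means equivalence of actions), whereas Theorem~\ref{T:ALEX_STRUCTURE} as phrased in this paper asserts uniqueness only ``up to homeomorphism''; the equivariant form is what is actually established in \cite{GS}, so your invocation is legitimate but should appeal to that stronger statement explicitly. Third, your effectiveness caveat is genuinely needed and can be closed cleanly: the kernel of any action with this diagram is the core of $H_1\times H_2$ in $G_1\times G_2$, which splits as the product of the kernels of the $G_i$-actions on $M_i$, so the given action and the join action descend to effective actions of the same quotient group with the same diagram, where the uniqueness theorem applies.
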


\section{Rationally nilpotent and Sullivan spaces}\label{S:Sullivan_spaces}

This section cannot provide an introduction to rational homotopy theory. For an elaborate discussion of the latter we refer the reader to \cite{RHT} and \cite{RHT_II}.

Here, let us merely recall some concepts relevant for this article.

The main objects in the version of rational homotopy theory elaborated by Sullivan are commutative differential graded algebras $(A,d)$ together with their morphisms respectively cochain algebras, i.e.~those which are concentrated in non--negative degrees. One special class of such morphisms are so-called \emph{quasi--isomorphisms}, i.e.~those which induce isomorphisms on cohomology. We call two commutative differential graded algebras $(A,d)$ and $(B,d)$ \emph{weakly equivalent} if there is a chain, a zig-zag, of quasi-isomorphisms
\begin{align*}
(A,d) \xrightarrow{\simeq} \ldots \xleftarrow{\simeq} \ldots \xleftarrow{\simeq} (B,d).
\end{align*}

\begin{definition}\cite[Section~12, Page 138]{RHT}
A \emph{Sullivan algebra} is a commutative cochain algebra of the form $(\Lambda V, d)$ where
\begin{itemize}
\item $ V = \{V^{p}\}_{p\geq 1} $, and $\Lambda V$ denotes the free graded commutative algebra on $V$;
\item  $V = \bigcup_{k=0}^{\infty} V(k)$, where $V(0)\subset V(1)\subset \ldots$ is an increasing sequence of
graded subspaces such that
$$d=0 \quad \text{in} \quad V(0) \quad  \text{and} \quad d\colon V(k)\to \Lambda V(k-1), \quad k\geq 1.$$
 \end{itemize}
\end{definition}

\begin{definition}\cite[Section~12, Page 138]{RHT}
\begin{itemize}
\item
A \emph{Sullivan model} for a commutative cochain algebra $(A, d) $ is a quasi--isomorphism
\[
m \colon (\Lambda V, d)  \to  (A,d)
\]
from a Sullivan algebra $(\Lambda V, d)$.
\item
A Sullivan algebra  $(\Lambda V, d)$ is called \emph{minimal} if
$$\im(d)\subseteq \Lambda ^{+} V \cdot \Lambda ^{+} V. $$
\end{itemize}
\end{definition}

 We now relate topological spaces to the theory of commutative cochain algebras. This transmission is carried out via \emph{Sullivan's functor} $A_{PL}$, which is defined based on a simplicial construction mimicking the algebra of differential forms on a smooth manifold. Throughout the article we freely  use this functor and refer the reader to  \cite[Section~10]{RHT} for more details.

 Recall that we refer to two (not necessarily nilpotent) spaces $X$, $Y$ as being \emph{rationally equivalent} or \emph{of the same rational homotopy type},  if the differential graded algebras of polynomial diffferential forms $\operatorname{A}_{\textrm{PL}}(X)$ and $\operatorname{A}_{\textrm{PL}}(Y)$ are weakly equivalent. (In order to avoid confusion, let us state here already that we shall only apply this definition to \emph{Sullivan spaces} such that it does become a reasonable definition in the sense that it preserves rational homotopy groups for example.)

\begin{definition}\cite[Section~12, Page 138]{RHT}
 If $X$ is a path connected topological space, then  a Sullivan model for $(A_{PL}(X), d)$
is called a Sullivan model for $X$.
\end{definition}

\begin{definition}\cite[Section~14, Page 181]{RHT}
A \emph{relative Sullivan algebra} is a commutative  cochain algebra of the form $(B\otimes \Lambda V,  d) $ where
\begin{itemize}
\item $(B, d)=(B\otimes 1, d)$ is a subcochain algebra and $H^{0}(B)=\QQ$;
\item $1\otimes V=V=\{V^{p}\}_{p\geq 1}$;
\item $V=\bigcup_{k=0}^{\infty}V(k)$, where $V(0)\subseteq V(1)\subseteq \ldots $ is and increasing sequence of graded subspaces  such that
$$d \colon V(0)\to B \quad  \text{and} \quad d \colon V(k)\to B\otimes \Lambda V(k-1), \quad k\geq 1.$$
 \end{itemize}
\end{definition}

Let $\varphi \colon (B, d)\to (C, d)$ be a morphism of commutative cochain algebras with $H^{0}(B)=\QQ$.  A Sullivan model for $\varphi$ is defined as follows:

\begin{definition}\cite[Section~14, Page 181]{RHT}
A Sullivan model for $\varphi$ is a quasi-isomorphism of cochain algebras
\[
m \colon (B\otimes \Lambda V,  d) \to  (C,d)
\]
such that $(B\otimes \Lambda V,  d) $ is a relative Sullivan algebra with base $(B, d)$ and $m\mid_{B}=\varphi$.
\end{definition}

If $f\colon X \to Y$ is a continuous map, then a Sullivan model for $A_{PL}(f)$ is called a Sullivan model for $f$.

In the case of the morphism $\QQ\to (A, d)$, $1\mapsto 1$, this definition reduces to the definition  of a Sullivan model of $(A, d)$.

Suppose
$$\varphi_{0}, \varphi_{1}\colon (B\otimes \Lambda V,  d)\to (A, d)$$
are two morphisms of commutative cochain algebras, in which $ (B\otimes \Lambda V,  d)$ is a relative Sullivan algebra and $\varphi_{0}$ and $\varphi_{1}$ restrict to the same morphism $\alpha\colon (B, d)\to (A, d)$.

\begin{definition}\cite[Secton~14, page 185]{RHT}
$\varphi_{0}$ and $\varphi_{1}$ are homotopic rel $B$ ($\varphi_{0}\sim\varphi_{1}$ rel $B$) if there is a morphism
$$\Phi\colon (B\otimes \Lambda V,  d)\to (A, d)\otimes (\Lambda (t\oplus dt), d)$$
such that $(id.\varepsilon_{0})\Phi=\varphi_{0}$, $(id.\varepsilon_{1})\Phi=\varphi_{1}$ and $\Phi(b)=\alpha(b)\otimes1$, $b\in B$, where the augmentations  $\varepsilon_{0}, \varepsilon\colon \Lambda(t, dt)\to \QQ$ are defined by  $\varepsilon_{0}(t)=0$, $\varepsilon_{1}(t)=1$, respectively.

If $B=\QQ$, then we merely say that $\varphi_{0}$ and $\varphi_{1}$ are homotopic.
\end{definition}

Suppose
\[
\alpha\colon (A, d)\to (A' , d)
\]
is an arbitrary morphism of commutative cochain algebras which satisfy $H^{0}(A) =\QQ=H^{0}(A')$. Let $m\colon (\Lambda V,d)\to (A,d)$ and $m' \colon (\Lambda V',d)\to (A',d)$ be Sullivan models. There is a unique homotopy class of morphisms
$$\varphi\colon (\Lambda V, d)\to (\Lambda V', d)$$
such that $m'\varphi\sim \alpha m$.

\begin{definition}\cite[Secton~14, page 154]{RHT}
A morphism $\varphi\colon (\Lambda V, d)\to (\Lambda V', d)$ such that $m'\varphi\sim \alpha m$ is called a \emph{Sullivan representative} for $\alpha$.  If  $f\colon X\to Y$ is a continuous map, then a Sullivan representative of $A_{PL}(f)$ is called a Sullivan representative of $f$.
\end{definition}

Let
$$\varphi \colon (B,d)\to (C,d)$$
be a morphism of commutative cochain algebras such that $H^{0}(B) =\QQ=H^{0}(C)$ and $H^{1}(\varphi)$ is injective.

\begin{thm}\cite[Theorem~14.12]{RHT}
The morphism $\varphi$  has a minimal Sullivan model
$$m \colon (B\otimes \Lambda V,  d) \xrightarrow{\simeq} (C, d).$$
If $m' \colon (B\otimes \Lambda V',  d) \xrightarrow{\simeq} (C, d)$ is a second minimal Sullivan model for $\varphi$, then
there is an isomorphism
$$\alpha \colon (B\otimes \Lambda V,  d)\xrightarrow{\cong} (B\otimes \Lambda V',  d)$$
 restricting to $ id\mid_{B}$, and such that $m'\alpha\sim m \,\,\text{rel} \,\,B.$
\end{thm}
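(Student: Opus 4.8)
The plan is to prove both assertions—existence of a minimal Sullivan model and its uniqueness up to isomorphism rel $B$—by the standard step-by-step construction of relative Sullivan algebras together with the rigidity properties of minimal models. Throughout, the hypotheses $H^0(B)=\QQ=H^0(C)$ and the injectivity of $H^1(\varphi)$ serve only to start the induction in the lowest degrees; everything thereafter is formal.

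For \emph{existence}, I would build $(B\otimes\Lambda V,d)$ together with the quasi-isomorphism $m$ by induction on degree, which simultaneously provides the increasing filtration $V=\bigcup_k V(k)$ demanded by the definition. Starting from $(B,d)$ itself (the case $V=0$, $m=\varphi$), I maintain the inductive hypothesis that after the $n$-th stage there is a relative Sullivan algebra $(B\otimes\Lambda V(n),d)$ and a morphism $m_n$ to $(C,d)$ with $m_n|_B=\varphi$ such that $H^i(m_n)$ is an isomorphism for $i\le n$ and injective for $i=n+1$. The base case is precisely the hypothesis, since $H^0(\varphi)$ is the identity on $\QQ$ and $H^1(\varphi)$ is injective. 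For the inductive step I would first adjoin closed generators in degree $n+1$ mapping under $m$ to cocycles representing a basis of $\operatorname{coker}H^{n+1}(m_n)$, thereby making $H^{n+1}$ surjective and hence bijective; and then adjoin generators $u$ in degree $n+1$ with $du$ a cocycle representing a class in $\ker H^{n+2}(m_n)$ and $m(u)$ a chosen primitive of its image in $C$, thereby annihilating that kernel. A routine check shows these attachments leave the lower-degree cohomology untouched and restore the inductive hypothesis one degree higher; passing to $V=\bigcup_n V(n)$ yields a quasi-isomorphism $m$. Minimality, i.e. that $\im d$ is decomposable in the appropriate relative sense, is arranged by always choosing the killing cocycles $du$ to be decomposable, which is possible by the inductive minimality of the part already constructed.

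For \emph{uniqueness}, let $m\colon(B\otimes\Lambda V,d)\to(C,d)$ and $m'\colon(B\otimes\Lambda V',d)\to(C,d)$ be two minimal Sullivan models of $\varphi$. Since a relative Sullivan algebra satisfies the lifting property up to homotopy against the quasi-isomorphism $m'$, the lifting lemma for cochain algebras produces a morphism $\alpha\colon(B\otimes\Lambda V,d)\to(B\otimes\Lambda V',d)$ with $\alpha|_B=\mathrm{id}_B$ and $m'\alpha\sim m$ rel $B$. Because $m$ and $m'$ are quasi-isomorphisms, $H(\alpha)$ is an isomorphism. The final step is the rigidity of minimal models: a morphism between minimal relative Sullivan algebras over $B$ that restricts to $\mathrm{id}_B$ and induces an isomorphism on cohomology is itself an isomorphism. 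Applying this to $\alpha$ yields the asserted isomorphism, with $m'\alpha\sim m$ rel $B$ holding by construction.

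The main obstacle is this rigidity step, and it is exactly where minimality and the filtration $V=\bigcup_k V(k)$ do the work. Because the differential of a minimal algebra carries no linear part, $\alpha$ has a well-defined linear part $V\to V'$; filtering both sides by word length and comparing the associated spectral sequences, the cohomology isomorphism forces this linear part to be an isomorphism, after which a filtration induction upgrades $\alpha$ to an honest isomorphism. Guaranteeing minimality during the existence construction—the decomposability of the killing cocycles—is the other delicate point, and both rest on the quadratic nature of $d$ built into the definition of a minimal Sullivan algebra.
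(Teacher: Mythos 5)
Your uniqueness argument (the lifting lemma to produce $\alpha$ with $\alpha|_B=\mathrm{id}_B$ and $m'\alpha\sim m$ rel $B$, then rigidity of minimal relative Sullivan algebras) is the standard one and agrees with the proof in \cite{RHT}, which is all the paper itself relies on, since it quotes this theorem without proof. The genuine gap is in your existence argument. You build the minimal model by induction on degree and claim that ``a routine check'' shows that adjoining $V^{n+1}$ restores the hypothesis that $H^i(m_{n+1})$ is an isomorphism for $i\le n+1$ and injective for $i=n+2$. This fails in exactly the generality of the theorem: the hypotheses only demand that $H^1(\varphi)$ be injective, so in general $\operatorname{coker}H^1(\varphi)\neq 0$ or $\ker H^2(\varphi)\neq 0$ and the construction must adjoin generators of degree $1$. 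Once $V^1\neq 0$, adjoining $V^{n+1}$ creates new degree-$(n+2)$ elements $v\cdot x$ with $v\in V^{n+1}$ and $x\in V^1$ closed; the class $[v\cdot x]$ is nonzero in the new algebra but can map to an exact element of $C$ (a product of non-exact cocycles can be exact), so it lies in $\ker H^{n+2}(m_{n+1})$ and was not killed at your stage. It cannot be killed by further degree-$(n+1)$ generators without producing yet more such kernel classes: this is a moving target, and no finite number of passes through a fixed degree terminates (already the case $n=0$, making $H^1(m)$ bijective and $H^2(m)$ injective using degree-$1$ generators, is an infinite iterative process). Incidentally, your concern about choosing killing cocycles to be decomposable is misplaced in the relative setting: relative minimality only requires $\im d\subseteq B^{+}\otimes \Lambda V+B\otimes\Lambda^{\geq 2}V$, and any cocycle of degree $n+2$ in $B\otimes\Lambda V^{\leq n}$ automatically has vanishing linear part, so minimality is not where the difficulty sits---convergence of the induction is.

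This is precisely why the cited proof is organized differently. In \cite{RHT} one first constructs a possibly non-minimal Sullivan model of $\varphi$ using the stage filtration $V(0)\subseteq V(1)\subseteq\cdots$ of the definition (each stage adjoins generators of \emph{all} degrees, every kernel class created at stage $k$ is killed at stage $k+1$, and only the union over all stages is a quasi-isomorphism), and then extracts a minimal model by the splitting theorem quoted in the paper as Theorem~\ref{T:Surjectivization}: the Sullivan model decomposes as $(B\otimes\Lambda W,d)\otimes(\Lambda(U\oplus dU),d)$ with $(B\otimes\Lambda W,d)$ minimal and the second factor contractible, so that composing the (quasi-isomorphic) inclusion of the minimal factor with $m$ gives the minimal Sullivan model; uniqueness then proceeds as you describe. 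To repair your proof, either adopt this two-step structure, or note that your direct degree-by-degree construction is valid only when no degree-$1$ generators are required (e.g.\ when $H^1(\varphi)$ is an isomorphism and $H^2(\varphi)$ is injective), which is strictly weaker than the statement.
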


\begin{definition}\cite[Section~14, Page 182]{RHT}
A relative Sullivan algebra $(B\otimes \Lambda V,  d) $ is \emph{minimal} if
$$\im(d)\subseteq B^{+}\otimes \Lambda V+B\otimes \Lambda ^{\geq 2}V.$$
A minimal Sullivan model for $ \varphi \colon (B,d) \to (C,d) $ is a Sullivan model $ (B\otimes \Lambda V,  d) \xrightarrow{\simeq}(C, d) $ such that $(B\otimes \Lambda V,  d) $ is minimal.
\end{definition}

 \begin{definition}
A Sullivan algebra of the form $( \Lambda ( U \otimes dU ), d )$,  where $d\colon U\rightarrow dU$ is an isomorphism, is called contractible.
 \end{definition}

 \begin{thm}\cite[Theorem~14.9]{RHT}\label{T:Surjectivization}
 Let $(B\otimes \Lambda V,  d)$ be a relative Sullivan algebra. Then the identity of $B$ extends to an isomorphism of cochain algebras,
$$( B\otimes \Lambda W) \otimes ( \Lambda ( U \oplus dU ), d ) \xrightarrow{\cong} ( B\otimes \Lambda V),$$
in which $( B\otimes \Lambda W)$ is a minimal relative Sullivan algebra and $( \Lambda (U \oplus dU ), d )$ is contractible.
 \end{thm}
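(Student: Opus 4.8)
The plan is to reduce the statement to a decomposition of a single cochain complex---the \emph{linear part} of the differential---and then to upgrade that linear decomposition to an honest tensor factorization of cochain algebras by an inductive change of generators along the filtration $V=\bigcup_k V(k)$.

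First I would isolate the linear part of $d$. Composing $d\colon V\to B\otimes \Lambda V$ with the projection $B\otimes \Lambda V\to (B\otimes \Lambda V)/(B^+\otimes \Lambda V+B\otimes \Lambda^{\geq 2}V)\cong \QQ\oplus V$ and retaining the $V$-component yields a map $d_0\colon V\to V$. Since $d^2=0$ and the projection is multiplicative, $d_0^2=0$, so $(V,d_0)$ is a cochain complex; moreover $dV(k)\subseteq B\otimes \Lambda V(k-1)$ forces $d_0V(k)\subseteq V(k-1)$, so $d_0$ strictly lowers the filtration. By the very definition recalled above, $(B\otimes \Lambda V,d)$ is minimal precisely when $d_0=0$.

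Working over the field $\QQ$ I would then split the complex $(V,d_0)$, compatibly with the filtration, as a direct sum of graded vector spaces $V=W\oplus U\oplus d_0U$, where $d_0|_U\colon U\xrightarrow{\cong} d_0U=\im d_0$ is an isomorphism and $W\subseteq \ker d_0$ is a complement to $\im d_0$ inside $\ker d_0$ (so that $W$ maps isomorphically onto $H(V,d_0)$ and $d_0|_W=0$). The contractible factor is then produced by replacing the linear cocycles $d_0U$ with the \emph{actual} differentials: set $dU:=d(U)\subseteq B\otimes \Lambda V$, whose elements are cocycles since $d(du)=d^2u=0$, and for which $d\colon U\to dU$, $u\mapsto du$, is again an isomorphism, because this replacement is triangular---hence invertible---with respect to the filtration. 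Thus $(\Lambda(U\oplus dU),d)$ is contractible, and $\{W,U,dU\}$ is a new free generating set for $B\otimes \Lambda V$ as an algebra.

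The heart of the argument is the last step: producing the isomorphism $(B\otimes \Lambda W)\otimes(\Lambda(U\oplus dU),d)\xrightarrow{\cong}(B\otimes \Lambda V,d)$ restricting to $\mathrm{id}_B$, which amounts to arranging that $B\otimes \Lambda W$ be a $d$-stable sub-cochain-algebra. Since the subalgebra $\Lambda(U\oplus dU)$ is already $d$-closed, what remains is to remove the components of $dW$ that fall into the contractible factor; I would do this by induction on $k$, using that $d$ sends $V(k)$ into $B\otimes \Lambda V(k-1)$, by modifying each generator spanning $W$ by an element of the ideal generated by $U\oplus dU$ (absorbed using acyclicity of the contractible part). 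The main obstacle---and the reason an induction is needed rather than a one-step change of basis---is the bookkeeping: a correction made to cancel a contractible component of one generator can in principle reintroduce such a component elsewhere or create a nonzero linear part on $W$, so one must check that the strict lowering of the filtration by $d_0$ makes the process triangular and terminating, and that after all corrections $d_0|_W=0$ is preserved, which yields minimality of $(B\otimes \Lambda W,d)$. As uniqueness of the splitting is not asserted, no further rigidity argument is required.
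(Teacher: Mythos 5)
The paper itself offers no proof of this statement: it is imported verbatim, as background, from F\'elix--Halperin--Thomas \cite[Theorem~14.9]{RHT}, so there is no in-paper argument to measure yours against. What you have written is a reconstruction of the standard textbook proof (linear part of the differential, splitting $V=W\oplus U\oplus d_0U$, the contractible factor $\Lambda(U\oplus dU)$ obtained by the triangular substitution $d_0u\mapsto du$, and an inductive correction of $W$ along the filtration). That is indeed the right strategy, and when $B^0=\QQ$---which is the case in every application of this theorem in the present paper, where the base is a polynomial algebra $H^*(\B K,\QQ)$---your outline can be completed.

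Two points, however, keep the proposal from being a proof of the theorem as stated. First, the definition of a relative Sullivan algebra requires only $H^0(B)=\QQ$, not $B^0=\QQ$ (think of $B=A_{PL}(Y)$), so your identification $(B\otimes\Lambda V)/(B^+\otimes\Lambda V+B\otimes\Lambda^{\geq 2}V)\cong\QQ\oplus V$ is false in general: the quotient is $B^0\oplus(B^0\otimes V)$. The linear part is then naturally a map $V\to B^0\otimes V$, and with your $\QQ$-valued truncation of it the equivalence ``minimal $\Leftrightarrow d_0=0$'' breaks down: writing $B^0=\QQ\oplus C$, a differential with $d_0=0$ can still have components in $C\otimes V$, so at the end of your argument $d_0|_W=0$ does not yield minimality of $(B\otimes\Lambda W,d)$. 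This must be repaired, for instance by working modulo the intrinsic ideal $B^+\cdot A+A^+\cdot A^+$ (where $A=B\otimes\Lambda V$; this ideal is independent of the chosen generating space) and then facing the fact that one is splitting complexes of $B^0$-modules rather than of $\QQ$-vector spaces. Second, the step you yourself identify as the heart---modifying $W$ so that $B\otimes\Lambda W$ becomes $d$-stable---is asserted rather than carried out, and the phrase ``absorbed using acyclicity of the contractible part'' hides exactly the two facts that make the induction work: (i) the \emph{ideal} $I$ generated by $U\oplus dU$ (not just the subalgebra $\Lambda(U\oplus dU)$) is $d$-stable and acyclic, which one proves by filtering by powers of $I$ and using that the contractible differential preserves word-length; and (ii) in the inductive step, writing $dw=\alpha+\beta$ with $\alpha$ in the span of the already corrected generators and $\beta\in I$, the equation $d^2w=0$ together with $d$-stability of both summands and the fact that they intersect trivially forces $d\beta=0$, whence $\beta=d\gamma$ with $\gamma\in I$ by (i), and $w':=w-\gamma$ is the corrected generator. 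With (i) and (ii) supplied, and the $B^0$ issue either repaired or excluded by hypothesis, your plan does go through; without them, the claim that the corrections are ``triangular and terminating'' has no argument behind it.
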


\begin{rem}
For a contractible algebra $( \Lambda (U \oplus dU ), d )$ we have that $H^{0}( \Lambda (U \oplus dU ), d )=\QQ$ and $H^{i}( \Lambda (U \oplus dU ), d )=0$ for $i\geq 1$.
\end{rem}

\begin{rem}\label{R:Surjective_Trick}
Recall that up to homotopy any morphism of cochain algebras can be made surjective via ``the surjective trick'' (see \cite[Page~148]{RHT}), i.e.~in the standard model category morphisms are homotopic to fibrations.
\end{rem}

After a quick review of the basics of rational homotopy theory, we recall the definitions of \emph{Sullivan spaces} and
\emph{rationally nilpotent spaces}.   We refer the reader to \cite[Chapters~7 and 8]{RHT_II} for more details.


Let $(X, \ast)$ be a path-connected space. Let  $(\Lambda V, d)$ be its minimal Sullivan model  with a quasi-isomorphism $m_X\colon (\Lambda V, d)\to A_{PL}(X)$.  Then 
as in \cite[Section~1.8]{RHT_II} one can construct  linear maps
$$\pi_k({m_X})\colon \pi_k(X)\otimes \QQ\to \pi_k(\Lambda V, d),\quad k\geq 2,$$
where the groups $ \pi_k(\Lambda V, d)$ are the homotopy groups of $(\Lambda V, d)$ (see \cite[Page~35]{RHT_II} for the definition). If $X$ is simply-connected and if $H^*(X, \QQ)$ is a graded space of finite type, i.e.   $\dim~H^i(X, \QQ)<\infty$, for all $i\geq 0$, then the linear maps $\pi_k({m_X})$, $k\geq 2$, are all isomorphisms.
This may no longer be true if $X$ is not simply-connected. When it is true, then the Sullivan model of $X$  reflects the homotopy invariants of $X$. Thus, we have the following definition.

\begin{definition} \cite[Page~195]{RHT_II}\label{D:Sullivan_Space}
A Sullivan space is a path--connected based space $(X,\ast)$ with universal covering space $(\tilde{X},\ast)$ such that
\begin{itemize}
\item[(i)]
 $\dim H^1(X, \QQ) <\infty$ and $\dim H^k(\tilde{X}, \QQ) <\infty$ for  $k \geq 2$;
 \item[(ii)]
A minimal model $m_X\colon (\Lambda V, d)\to A_{PL}(X)$ satisfies
 $$\pi_k({m_X})\colon \pi_k(X)\otimes \QQ\xrightarrow[]{\cong} \pi_k(\Lambda V, d)\quad k\geq 2.$$
  \end{itemize}
\end{definition}

Recall that if $G$ is a group, then the \emph{lower central series} of $G$ is defined by $G^1=G$ and $G^{n+1}=[G, G^n]$.

\begin{definition}\cite[Page~227]{RHT_II}   A group $G$ is called \emph{rationally nilpotent} if  \linebreak[4]$H_*(\B G^{n+2}, \mathbb{Q})=\mathbb{Q}$ for some $n\geq 0$.
\end{definition}

\begin{example}
\begin{itemize}
\item[i)]Every nilpotent group is rationally nilpotent.

\item[ii)] Let $G$ be a finite group. Then by \cite[Corollary~6.5.9]{Weibel}, group homology satisfies $H_{p}(G, \mathbb{Q})=0$, for all $p\neq 0$. Therefore, $G$ is rationally nilpotent, for  $H_{*}(\B G, \QQ)=H_{*}(G, \QQ)$.
\end{itemize}
\end{example}

\begin{definition} \cite[Page~227]{RHT_II} 
A connected CW complex $Y$ is called \emph{rationally nilpotent} if $\pi_{1}(Y, \ast) $ is a rationally nilpotent group which acts nilpotently on each $\pi_{n}(\tilde{Y})\otimes\QQ$, where $\tilde{Y}$ is the universal covering space of $Y$.
\end{definition}

The following Proposition shows that rationally nilpotent spaces are indeed Sullivan spaces.

\begin{prop}\label{P:Rationally_Nilpotent_is_Sullivan}\cite[Corollary~8.2]{RHT_II}
Let $(Y,\ast)$ be a connected CW complex  pointed by a $0$-cell. If $(Y,\ast)$ is a rationally nilpotent CW complex with rational cohomology of finite type then $Y$ is a Sullivan space.
\end{prop}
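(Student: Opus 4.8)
The plan is to show that a rationally nilpotent CW complex $Y$ of finite rational cohomology type satisfies the two conditions of Definition~\ref{D:Sullivan_Space}. Condition~(i) is essentially immediate: since $Y$ has rational cohomology of finite type we have $\dim H^1(Y,\QQ)<\infty$, and the universal cover $\tilde{Y}$ is nilpotent (its $\pi_1$ is trivial, hence trivially acts nilpotently on its own rational homotopy), so by the standard theory of nilpotent spaces $\tilde Y$ also has rational cohomology of finite type---here I would invoke that the finite-type hypothesis on $Y$ together with rational nilpotence of $\pi_1(Y)$ transfers to $\tilde Y$. The genuine content lies in condition~(ii), namely that the linear maps $\pi_k(m_Y)\colon \pi_k(Y)\otimes\QQ\to\pi_k(\Lambda V,d)$ are isomorphisms for all $k\geq 2$.

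First I would reduce to the universal cover. The key point is that rational nilpotence of $Y$ is precisely the hypothesis under which the classical Sullivan--de~Rham correspondence applies: a nilpotent space of finite rational type has a minimal Sullivan model whose homotopy groups recover the rationalized homotopy groups. Concretely, since $\pi_1(Y)$ is rationally nilpotent and acts nilpotently on each $\pi_n(\tilde Y)\otimes\QQ$, the space $Y$ is a rational nilpotent space in the sense of \cite[Chapters~7 and 8]{RHT_II}, and for such spaces the fundamental theorem of rational homotopy theory (the version for nilpotent spaces) asserts exactly that the minimal model detects rational homotopy in degrees $\geq 2$. So the strategy is to quote the nilpotent-space version of the isomorphism $\pi_*(m_X)\cong \pi_*(X)\otimes\QQ$ rather than to reprove it.

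The cleanest route, which I expect is the intended one, is to cite the reference \cite[Corollary~8.2]{RHT_II} directly, since the statement is attributed to it; the ``proof'' then amounts to unwinding that this corollary is stated under exactly the hypotheses of rational nilpotence plus finite rational cohomology type, and observing that these match the definition of a Sullivan space verbatim. If instead a self-contained argument is wanted, I would set up the rationalization $Y\to Y_\QQ$ of the nilpotent space $Y$, note that finite rational type guarantees a minimal model of finite type in each degree, and then use that for a nilpotent space the Hurewicz-type/Postnikov-tower argument identifies $\pi_k(\Lambda V,d)$ with $\mathrm{Hom}(\pi_k(Y)\otimes\QQ,\QQ)^\vee$ degree by degree, building up the Postnikov tower of $Y$ against the increasing filtration $V(0)\subset V(1)\subset\cdots$ of the Sullivan algebra.

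The main obstacle is genuinely conceptual rather than computational: one must make sure that the \emph{non-simply-connected} case is handled, since the isomorphism $\pi_k(m_X)$ can fail without a nilpotence hypothesis (as the excerpt itself warns right before Definition~\ref{D:Sullivan_Space}). The nilpotent action of $\pi_1$ on the higher rational homotopy groups is exactly what repairs this, allowing the Postnikov decomposition to be rationalized compatibly. Thus the heart of the verification is checking that rational nilpotence gives a well-behaved rational Postnikov tower whose minimal model is the minimal Sullivan model of $Y$; everything else is formal, and I would lean on \cite[Corollary~8.2]{RHT_II} to supply this step.
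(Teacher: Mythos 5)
Your proposal matches the paper: the paper gives no proof of this proposition at all, simply quoting it as \cite[Corollary~8.2]{RHT_II}, which is exactly the ``cleanest route'' you identify as the intended one. Your additional sketch of a self-contained Postnikov-tower argument goes beyond what the paper records, but the citation is the paper's entire content here, so the approaches coincide.
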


\begin{definition}\cite[Page~109]{RHT_II}
Let $(C, d)$ be a commutative cochain algebra for which $H^{0}(C)=\QQ$. A minimal Sullivan \emph{$k$-model} for $(C,d)$ is a morphism $\varphi\colon (\Lambda V, d) \to (C, d)$ from a minimal Sullivan algebra in which $ V = V^{\geq k }$ and $H^{i}(\varphi)$ is an isomorphism for $ i \leq k$ and injective for $i = k+1$. If $X$ is a path--connected topological space then a $k$-minimal Sullivan model for $A_{PL}(X)$ is called a minimal Sullivan $k$-model for $X$.
\end{definition}

Now we show that a Sullivan space rationally is the same as its universal cover, that is,  they have isomorphic minimal Sullivan models.
\begin{prop}\label{L_Sullivan_Model_of_Non_Simply_Connected}
Let $(X, \ast)$ be a connected Sullivan CW complex with finite fundamental group $\Gamma$, and let $\tilde{X}$ be its universal cover.   Then
\begin{itemize}
\item [(i)]
the minimal  Sullivan model of $X$ is isomorphic to the minimal Sullivan model of $\tilde{X}$.
\item[(ii)] for the universal covering map $p\colon \tilde{X}\to X$, the morphism $A_{PL}(p)$ induced by $p$ is a quasi-isomorphism.
\end{itemize}
\end{prop}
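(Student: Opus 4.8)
The plan is to reduce both statements to the single claim that a Sullivan representative of $p$ is already an \emph{isomorphism} of minimal Sullivan algebras. Since $\tilde X$ is simply connected and, by condition (i) of the hypothesis that $X$ is a Sullivan space, has $\dim H^k(\tilde X,\QQ)<\infty$ for all $k$, it is of finite rational type and hence admits a minimal Sullivan model $m_{\tilde X}\colon (\Lambda W,d)\xrightarrow{\simeq}A_{PL}(\tilde X)$ with $W=W^{\geq 2}$ and $\pi_k(m_{\tilde X})\colon \pi_k(\tilde X)\otimes\QQ\xrightarrow{\cong}(W^k)^{\ast}$ for $k\geq 2$. Let $m_X\colon (\Lambda V,d)\xrightarrow{\simeq}A_{PL}(X)$ be the minimal Sullivan model of $X$. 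By the standard lifting property there is a Sullivan representative $\varphi\colon (\Lambda V,d)\to(\Lambda W,d)$ of $A_{PL}(p)$, i.e.\ $m_{\tilde X}\circ\varphi\simeq A_{PL}(p)\circ m_X$. If I can show $\varphi$ is an isomorphism, then (i) follows at once, and, since $m_X$ and $m_{\tilde X}$ are quasi-isomorphisms and homotopic morphisms agree on cohomology, $A_{PL}(p)$ is a quasi-isomorphism, giving (ii).

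The main obstacle is the degree-one part, i.e.\ controlling the fundamental group, and I expect this to be the only delicate point. First I would show $V^1=0$. Since $H_1(X;\Z)=\Gamma^{\mathrm{ab}}$ is finite, the universal coefficient theorem gives $H^1(X,\QQ)=0$, hence $\ker\big(d\colon V^1\to\Lambda^2V^1\big)=H^1(\Lambda V,d)=0$. Writing the Sullivan filtration $V=\bigcup_k V(k)$ with $V(0)=V\cap\ker d$ and $d(V(k))\subseteq\Lambda V(k-1)$, minimality and degree reasons force $dv\in\Lambda^2\big(V^1(k-1)\big)$ for $v\in V^1(k)$. Since $V^1(0)=V^1\cap\ker d=0$, an induction on $k$ shows $dv=0$, whence $v\in V^1(0)=0$, for every $v\in V^1(k)$; therefore $V^1=\bigcup_k V^1(k)=0$. (Equivalently, the $1$-minimal model of $X$ is the Malcev Lie algebra of the finite group $\Gamma$, which is trivial.) In particular $V=V^{\geq 2}$, matching $W=W^{\geq 2}$.

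It then remains to see that $\varphi$ induces an isomorphism on indecomposables in each degree $\geq 2$. Because $p$ is a covering map, $\pi_k(p)\colon \pi_k(\tilde X)\to\pi_k(X)$ is an isomorphism for $k\geq 2$, and hence so is $\pi_k(p)\otimes\QQ$. Since $X$ is a Sullivan space, $\pi_k(m_X)$ is an isomorphism for $k\geq 2$, and the same holds for $\tilde X$; by naturality of these identifications the linear part $Q\varphi\colon V^k\to W^k$ realizes $\pi_k(p)\otimes\QQ$ (up to duality), so it is an isomorphism for $k\geq 2$. Together with $V^1=W^1=0$ this shows $Q\varphi\colon V\to W$ is an isomorphism in every degree. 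A morphism of minimal Sullivan algebras which is an isomorphism on indecomposables is an isomorphism, as one sees by filtering by word length and passing to the associated graded map $\Lambda(Q\varphi)$. Thus $\varphi$ is an isomorphism, proving (i), and (ii) follows as explained above. Everything beyond the vanishing $V^1=0$ is a formal consequence of the Sullivan-space hypotheses and the covering-space homotopy isomorphisms.
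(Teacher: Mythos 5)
Your proof is correct, but it takes a genuinely different route from the paper's. The paper's proof of (i) never touches the covering map directly: it observes that finiteness of $\Gamma$ gives $H^1(\B\Gamma,\QQ)=0$, invokes \cite[Corollary~2.1 and Proposition~7.6]{RHT_II} to see that the minimal Sullivan $1$-model of $\B\Gamma$ is $(\QQ,0)$, and then quotes \cite[Theorem~7.2]{RHT_II} to identify the minimal models of $X$ and $\tilde X$; for (ii) it runs the classifying space fibration $F\to X\times_{\B\Gamma}M\B\Gamma\to\B\Gamma$ through \cite[Lemma~4.3]{RHT_II} and \cite[Diagram~(7.4)]{RHT_II} to conclude that $A_{PL}(p)$ is a quasi-isomorphism. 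You instead work hands-on with a Sullivan representative $\varphi$ of $p$ itself: your filtration argument showing that $H^1(X;\QQ)=0$ forces $V^1=0$ is sound (this is exactly where the nilpotence condition on Sullivan algebras is needed, and you use it correctly), and the rest follows from the defining property of Sullivan spaces together with the covering-space isomorphisms $\pi_k(p)$, $k\geq 2$, and the standard fact that a morphism of minimal Sullivan algebras inducing an isomorphism on indecomposables is an isomorphism. What each approach buys: the paper's argument is shorter given the cited machinery and isolates precisely where finiteness of $\Gamma$ enters (triviality of the rational $1$-model of $\B\Gamma$); yours is more self-contained and transparent, derives (ii) immediately from the defining homotopy $m_{\tilde X}\varphi\simeq A_{PL}(p)\,m_X$, and in fact only uses finiteness of $\Gamma$ through $H^1(X;\QQ)=0$, so it proves a marginally more general statement. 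The one point you should make explicit with a citation is the naturality of the comparison maps $\pi_k(m_X)$ of \cite[Section~1.8]{RHT_II} with respect to continuous maps and their Sullivan representatives --- it is standard and true, but it is the load-bearing step that ties $Q\varphi$ to $\pi_k(p)\otimes\QQ$.
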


\begin{proof}
\begin{itemize}
\item[(i)]
 First note that since $\Gamma$ is a finite group, $H^1(\B\Gamma, \mathbb{Q})=0$.
 By \cite[Corollary~2.1 and Proposition~7.6]{RHT_II} we conclude that the minimal Sullivan  $1$-model of $\B\Gamma$ is isomorphic to $(\mathbb{Q}, 0)$. Therefore, by \cite[Theorem 7.2]{RHT_II}, the minimal Sullivan model of $X$ is isomorphic to the minimal Sullivan model of its universal covering space.
 \item[(ii)]  Suppose
$$F\xrightarrow[]{j}X\times_{\B\Gamma}M\B\Gamma\xrightarrow[]{q}\B\Gamma$$
is a classifying space fibration for a connected CW complex $(X, \ast) $ (see \cite[Page~93]{RHT_II} for the definition).  Then by \cite[Lemma~4.3]{RHT_II}, there is a commutative diagram

\begin{figure}[H]
\centering

\centering
\begin{tikzpicture}[]
\draw [->] (-1.6, 0) -- (-.6,0) ;
\node at (-1.7,0) [left ]{$\ \ \ \ (F, \ast)$};
\node at (.6,0)  {$\ \ \ \ (X\times_{\B\Gamma}M\B\Gamma, \ast)$};
\draw [->] (-1.6,-2) -- (-0.6,-2) ;
\node at (-1.7,-2) [left ]{$\ \ \ \ (\tilde{X},\ast)$};
\node at (0,-2)  {$\ \ \ \ (X, \ast),$};
\draw [->] (-2.3, -.3) -- (-2.3,-1.7) ;
\draw [->] (0.5, -.3) -- (0.5,-1.7) ;
\node at (-1.4, 0) [above ]{$\ \ \ \ j$};
\node at (-1.4,-2) [below ]{$\ \ \ \ p$};
\node at (0.1, -1) [right ]{$\ \ \ \ \pi$};
\node at (-2.3,-1) [left]{$\ \ \ \ \sigma$};
\end{tikzpicture}
\end{figure}
\noindent where  $\pi$ and $\sigma$ are weak homotopy equivalences. By Part (i) and \cite[Diagram~(7.4), Page~197]{RHT_II} we have that $A_{PL}(j)$ is a quasi-isomrphism, and therefore, $A_{PL}(p)$ is a quasi-isomorphism.
\end{itemize}

\end{proof}

\begin{cor}\label{C_Sullivan_Model_of_BH}
Let $H$ be a compact Lie group whose classifying space $\B H$ is a Sullivan space. Then the minimal Sullivan model of $\B H$ is isomorphic to the minimal Sullivan model of $\B H_0$, and the map $A_{PL}(B\iota)$ is a quasi-isomorphism,  where $H_0$ is the identity component of $H$, and $\iota \colon H_{0}\to H$ is the inclusion.
\end{cor}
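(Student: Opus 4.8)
The plan is to reduce Corollary \ref{C_Sullivan_Model_of_BH} to the already-established Proposition \ref{L_Sullivan_Model_of_Non_Simply_Connected}. The key point is that the classifying space $\B H$ of a compact Lie group $H$ with identity component $H_0$ fits into a situation where the component group $\Gamma := H/H_0$ plays the role of the finite fundamental group in the earlier Proposition. Indeed, since $H$ is compact, it has finitely many connected components, so $\Gamma = \pi_0(H)$ is a finite group. First I would recall the standard fibration $\B H_0 \to \B H \to \B\Gamma$ induced by the short exact sequence of groups $1 \to H_0 \to H \to \Gamma \to 1$, which realizes $\B H_0$ as (a space with the homotopy type of) the universal cover of $\B H$ whenever $\B H_0$ is simply connected; more precisely, $\pi_1(\B H) \cong \pi_0(H) = \Gamma$ and $\pi_1(\B H_0) = 0$ since $H_0$ is connected. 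Thus the universal covering space $\widetilde{\B H}$ of $\B H$ is weakly equivalent to $\B H_0$, and the covering map $p\colon \widetilde{\B H} \to \B H$ corresponds under this identification to $\B\iota\colon \B H_0 \to \B H$.

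With this identification in hand, the argument is essentially a direct application of the earlier Proposition. I would verify that $\B H$ satisfies the hypotheses required: it is a connected CW complex (up to weak equivalence), it is assumed to be a Sullivan space, and its fundamental group $\Gamma$ is finite. Then Proposition \ref{L_Sullivan_Model_of_Non_Simply_Connected}(i) applies verbatim to give that the minimal Sullivan model of $\B H$ is isomorphic to the minimal Sullivan model of its universal cover, which we have identified with $\B H_0$. Likewise, Proposition \ref{L_Sullivan_Model_of_Non_Simply_Connected}(ii) gives that the induced map $A_{PL}(p)$ on polynomial forms is a quasi-isomorphism; under the identification of $p$ with $\B\iota$, this yields that $A_{PL}(\B\iota)$ is a quasi-isomorphism, as claimed.

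The step I expect to require the most care is the identification of $\B\iota\colon \B H_0 \to \B H$ with the universal covering map $p\colon \widetilde{\B H}\to \B H$ up to the appropriate notion of equivalence. One must check that $\B H_0$ is indeed simply connected---this follows from $\pi_1(\B H_0)\cong \pi_0(H_0) = 1$---and that the fibration associated to $1 \to H_0 \to H \to \Gamma \to 1$ genuinely exhibits $\B H_0$ as the universal cover, i.e.\ that the connecting map $\B H \to \B\Gamma$ induces the isomorphism $\pi_1(\B H)\cong \Gamma$ and that the fiber inclusion is $\B\iota$. This is standard but should be stated, since the earlier Proposition is phrased in terms of an abstract universal covering map $p$ rather than the specific geometric map $\B\iota$, and it is precisely this matching that transports both conclusions (the isomorphism of minimal models and the quasi-isomorphism on $A_{PL}$) to the group-theoretic inclusion $\iota\colon H_0 \hookrightarrow H$.
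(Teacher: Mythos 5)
Your proposal is correct and follows essentially the same route as the paper: the paper's proof simply notes that $\B\iota\colon \B H_0\to \B H$ is the universal covering map of $\B H$ (with $\pi_1(\B H)\cong \pi_0(H)$ finite by compactness) and then invokes Proposition~\ref{L_Sullivan_Model_of_Non_Simply_Connected}. The only difference is that you spell out the standard justification of this identification via the fibration $\B H_0\to \B H\to \B\Gamma$, which the paper leaves implicit.
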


\begin{proof}
Note that  $B\iota\colon \B H_{0}\to \B H$ is the universal covering map of $\B H$. The result now follows from Proposition~\ref{L_Sullivan_Model_of_Non_Simply_Connected}.
\end{proof}

\begin{prop}\label{P:Covering_of_a_Sullivan_space}
Let $(X, \ast)$ be a connected Sullivan CW complex with finite fundamental group. Then any covering of $(X, \ast)$ is a Sullivan space.
\end{prop}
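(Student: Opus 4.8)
The plan is to verify directly the two defining conditions of a Sullivan space (Definition~\ref{D:Sullivan_Space}) for the covering. Let $q\colon Y\to X$ be a connected covering. Since $\Gamma:=\pi_1(X,\ast)$ is finite, the subgroup $\Gamma':=\pi_1(Y,\ast)\subseteq\Gamma$ is again finite, both $q$ and the universal covering $p_Y\colon\tilde X\to Y$ are \emph{finite} coverings, and the universal cover of $Y$ agrees with that of $X$; write $\tilde Y=\tilde X$ and $p_X=q\circ p_Y\colon \tilde X\to X$ for the universal covering of $X$. Condition~(i) is then immediate: as $\Gamma'$ is finite, $H^1(Y,\QQ)=\operatorname{Hom}(H_1(Y;\Z),\QQ)=0$, while $H^k(\tilde Y,\QQ)=H^k(\tilde X,\QQ)$ is finite-dimensional for $k\geq 2$ because $X$ is a Sullivan space. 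The real content is condition~(ii), and the key step toward it is to prove that $A_{PL}(p_Y)$ is a quasi-isomorphism.

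To establish this I would avoid the circular route of applying Proposition~\ref{L_Sullivan_Model_of_Non_Simply_Connected} to $Y$ (which would presuppose that $Y$ is Sullivan) and argue instead by a sandwiching argument on rational cohomology. Since $X$ is Sullivan, Proposition~\ref{L_Sullivan_Model_of_Non_Simply_Connected}(ii) tells us that $A_{PL}(p_X)$ is a quasi-isomorphism, i.e.\ $p_X^{\,*}\colon H^*(X,\QQ)\to H^*(\tilde X,\QQ)$ is an isomorphism, and we may factor it through $Y$ as $p_X^{\,*}=p_Y^{\,*}\circ q^{*}$. For a finite covering the transfer homomorphism shows that $q^{*}\colon H^*(X,\QQ)\to H^*(Y,\QQ)$ and $p_Y^{\,*}\colon H^*(Y,\QQ)\to H^*(\tilde X,\QQ)$ are both injective. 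In each degree all three graded pieces are finite-dimensional (they inject into $H^*(\tilde X,\QQ)$, which has finite type), so from $\dim H^n(X,\QQ)\leq \dim H^n(Y,\QQ)\leq \dim H^n(\tilde X,\QQ)=\dim H^n(X,\QQ)$ we conclude that $q^{*}$ and $p_Y^{\,*}$ are both isomorphisms, whence $A_{PL}(p_Y)$ is a quasi-isomorphism. I expect this sandwiching step---breaking the apparent circularity by using only the already-known Sullivan property of $X$ together with transfer injectivity---to be the main point of the argument.

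Finally I would deduce condition~(ii). Because $A_{PL}(p_Y)$ is a quasi-isomorphism, $Y$ and $\tilde Y=\tilde X$ share the same minimal Sullivan model $(\Lambda V,d)$, and the minimal model $m_Y$ corresponds to $m_{\tilde Y}$ under this identification. As $\tilde Y=\tilde X$ is simply-connected with rational cohomology of finite type, the standard theory recalled before Definition~\ref{D:Sullivan_Space} gives that $\pi_k(m_{\tilde Y})\colon\pi_k(\tilde Y)\otimes\QQ\to\pi_k(\Lambda V,d)$ is an isomorphism for all $k\geq 2$. The maps $\pi_k(m_Y)$ and $\pi_k(m_{\tilde Y})$ sit in a commuting triangle with $\pi_k(p_Y)\otimes\QQ\colon\pi_k(\tilde Y)\otimes\QQ\to\pi_k(Y)\otimes\QQ$, which is an isomorphism for $k\geq 2$ since $p_Y$ is a covering map. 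By naturality of the construction of the maps $\pi_k(m_-)$ it follows that $\pi_k(m_Y)$ is an isomorphism for $k\geq 2$, which is exactly condition~(ii). Together with condition~(i) this shows that $Y$ is a Sullivan space.
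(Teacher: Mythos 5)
Your proof is correct, but it follows a genuinely different route from the paper's. The paper does not verify Definition~\ref{D:Sullivan_Space} directly: instead it invokes the characterization theorem \cite[Theorem~7.1]{RHT_II}, checking its hypotheses for the covering $\bar X$ --- namely that the minimal Sullivan ($1$-)model of $\B G$ for the finite group $G=\pi_1(\bar X,\ast)$ is $(\QQ,0)$, and that the holonomy action of $G$ on $H^*(\tilde X,\QQ)$ is locally nilpotent, being a subaction of the $\pi_1(X,\ast)$-action, which is locally nilpotent because $X$ is Sullivan. You instead verify the definition by hand: you reuse Proposition~\ref{L_Sullivan_Model_of_Non_Simply_Connected}(ii) for $X$ (correctly noting, and avoiding, the circularity of applying it to $Y$), combine it with transfer-induced injectivity of $q^*$ and $p_Y^*$ for the finite coverings to conclude that $A_{PL}(p_Y)$ is a quasi-isomorphism --- in fact surjectivity of $p_Y^*$ already follows from $\im p_Y^*\supseteq\im p_X^*$, so the dimension count is not even needed --- and then use naturality of the maps $\pi_k(m_-)$ from \cite[Section~1.8]{RHT_II} together with the simply-connected finite-type case to transfer condition~(ii) from $\tilde Y$ to $Y$. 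What the paper's route buys is brevity: one citation does all the work, at the cost of relying on the heavier machinery of locally nilpotent holonomy representations. What your route buys is a more self-contained argument whose only nontrivial outside input is the classical transfer and the naturality of $\pi_k(m_-)$; as a by-product you obtain the stronger conclusions that $q$ and $p_Y$ induce isomorphisms on rational cohomology and that $Y$, $X$, and $\tilde X$ all share the same minimal Sullivan model, i.e.\ you effectively re-prove Proposition~\ref{L_Sullivan_Model_of_Non_Simply_Connected} for the covering rather than quoting a criterion that subsumes it. Both arguments use the finiteness of $\pi_1(X,\ast)$ in an essential way (yours through the transfer, the paper's through the vanishing of $H^{\geq 1}(\B G,\QQ)$ and the finiteness of the subaction).
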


\begin{proof}
Let $(\bar{X}, {\ast})$ be a covering of $(X, \ast)$ and $(\tilde{X}, \ast)$ be their universal cover.  Since $\pi_1 (X, \ast)$ is finite, so is $G=\pi_1(\bar{X}, {\ast})$. Hence  a Sullivan $1$-model for the classifying space of the finite group $G$ is  $(\QQ, 0)$. Moreover, since $H^k(\B G, \QQ)=0$, for all $k\geq 1$,  a Sullivan $1$-model for $\B G$ is indeed its minimal Sullivan model. Furthermore, the action by covering transformations of  $G$ in $H^{*}(\tilde{X}, \QQ)$ is just the subaction of $\pi_1 (X, \ast)$ via covering transformations  in $H^{*}(\tilde{X}, \QQ)$, which is locally nilpotent since $(X, \ast)$ itself is a Sullivan space. Now the result follows from \cite[Theorem~7.1]{RHT_II}.
\end{proof}

Now we give a Sullivan model for a homogeneous space $G/H$ with $G$ connected and with the inclusion $\iota\colon H\to G$. When $H$ is connected as well, this model is known (see for example \cite[Proposition~15.16]{RHT}). The proof  is essentially the same as in the connected case.

\begin{prop}\label{P:Sullivan_model_of_a_homogeneous_space}
Let $G$ be a compact connected Lie group and $H$ be a closed subgroup of $G$ whose classifying space $\B H$ is a Sullivan space. Then a Sullivan model for $G/H$ is given by
\[
(\Lambda(V_{\B H}\oplus V_{G}), d)
\]
where  $(\Lambda V_{\B H}, 0)$ is the minimal Sullivan model for $\B H$ and $(\Lambda V_{G}, 0)$ with $V_G=\langle v_1,\ldots, v_k\rangle$ is the minimal Sullivan model for $G$. The differential $d$---then extending it as a derivation---is  defined by  $d|_{\Lambda V_{\B H}}=0$ and
\[
d(v_i)=H^*(\B\iota)(x_i)
\]
where $\Lambda V_{\B G}=\Lambda \langle x_1,\ldots, x_k\rangle$ and  where we identify $v_i = sx_i$  with $x_i$ via a $(-1)$-degree shift.
\end{prop}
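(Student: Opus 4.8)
The plan is to realise $G/H$ as the fibre of the classifying-space fibration $G/H \hookrightarrow \B H \xrightarrow{\B\iota} \B G$ and then to run the standard fibre-model argument; the only genuinely new point is to accommodate the disconnectedness of $H$. First I would pin down models of the base and total space. As $G$ is connected and compact, $\B G$ is simply connected with $H^*(\B G,\QQ)$ a polynomial algebra on even generators, so its minimal Sullivan model is $(\Lambda V_{\B G},0)$ with $V_{\B G}=\langle x_1,\dots,x_k\rangle$ and vanishing differential. For $\B H$ I would invoke Corollary~\ref{C_Sullivan_Model_of_BH}: since $\B H$ is a Sullivan space, its minimal model is isomorphic to that of $\B H_0$, and $H_0$ being a connected compact Lie group forces this model to be $(\Lambda V_{\B H},0)$ with even generators and zero differential. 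In particular $H^*(\B H,\QQ)=\Lambda V_{\B H}$ and $\B H$ is formal, so that, both differentials being zero and $\Lambda V_{\B H}$ being free, a Sullivan representative of $A_{PL}(\B\iota)$ is recorded on cohomology, namely $H^*(\B\iota)\colon(\Lambda V_{\B G},0)\to(\Lambda V_{\B H},0)$, $x_i\mapsto H^*(\B\iota)(x_i)$.

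Next I would construct a relative Sullivan model of this morphism over the base $(\Lambda V_{\B G},0)$. Take the algebra $(\Lambda V_{\B G}\otimes\Lambda V_{\B H}\otimes\Lambda V_G, D)$ with $D|_{\Lambda V_{\B G}}=0$, $D|_{\Lambda V_{\B H}}=0$ and $D v_i = x_i - H^*(\B\iota)(x_i)$, together with the projection $\rho$ which is the identity on $V_{\B H}$, sends $x_i\mapsto H^*(\B\iota)(x_i)$ and $v_i\mapsto 0$. To see that $\rho$ is a quasi-isomorphism I would perform the triangular change of generators $x_i':=x_i-H^*(\B\iota)(x_i)$, under which the algebra becomes $\Lambda V_{\B H}\otimes(\Lambda\langle x_1',\dots,x_k'\rangle\otimes\Lambda V_G, D)$ with $D v_i = x_i'$; the second tensor factor is a contractible (Koszul) algebra with cohomology $\QQ$, so $\rho$ induces an isomorphism on cohomology onto $(\Lambda V_{\B H},0)$. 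Hence the displayed algebra is indeed a relative Sullivan model of $A_{PL}(\B\iota)$.

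Finally I would divide out the base. Since $\B G$ is simply connected and all spaces involved have rational cohomology of finite type, the fibre-model theorem (cf.\ \cite{RHT}) applies: setting the augmentation ideal $\Lambda^+ V_{\B G}$ to zero yields a Sullivan model $(\Lambda(V_{\B H}\oplus V_G),\bar d)$ for the fibre $G/H$, with $\bar d|_{\Lambda V_{\B H}}=0$ and $\bar d v_i = -H^*(\B\iota)(x_i)$; replacing each $v_i$ by $-v_i$ absorbs the sign and produces exactly the asserted differential $d v_i = H^*(\B\iota)(x_i)$. This is a Sullivan algebra, the filtration by $\Lambda V_{\B H}$ followed by $V_G$ providing the required increasing sequence. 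The main obstacle, and the sole departure from the connected case, lies in justifying that this fibre-model machinery still computes $G/H$ when $H$ is disconnected; this is guaranteed precisely by the Sullivan-space hypothesis through Corollary~\ref{C_Sullivan_Model_of_BH}, which renders $\B H$ rationally indistinguishable from the simply connected $\B H_0$ and thus lets the zero-differential, formal picture of the connected case go through verbatim.
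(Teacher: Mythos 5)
Your proof is correct, but it follows a genuinely different route from the paper's. The paper replaces $G/H$ by the weakly equivalent total space $(\E H\times G)/H$ of the principal $G$-bundle over $\B H$ obtained by pulling back the universal bundle $\E G\to\B G$ along $\B\iota$, and then quotes the known model for such a bundle (\cite[Example~2.72]{FeOpTa}), which already carries the differential $dv_i=H^*(\B\iota)(x_i)$; as in your argument, Corollary~\ref{C_Sullivan_Model_of_BH} is what makes this legitimate for disconnected $H$. You instead regard $G/H$ as the fibre of $\B H\xrightarrow{\B\iota}\B G$, build the Koszul-type relative Sullivan model $(\Lambda V_{\B G}\otimes\Lambda V_{\B H}\otimes\Lambda V_G,D)$ with $Dv_i=x_i-H^*(\B\iota)(x_i)$ by hand, verify it by the triangular change of variables, and then kill the base via the fibre-model theorem of \cite{RHT}, which applies since $\B G$ is simply connected ($G$ being connected), the cohomologies involved have finite type, and $G/H$ is connected because $G$ is. What your route buys is self-containedness and visibility of exactly where connectedness of $G$ enters; what it costs is two technical points you pass over quickly: the fibre-model theorem requires a relative Sullivan model of $A_{PL}(\B\iota)\circ m_{\B G}$ on the nose, whereas you construct a model of the Sullivan representative between the minimal models, so the homotopy-commutative square must be strictified using the homotopy extension property of relative Sullivan algebras (routine, since homotopic maps agree in cohomology); and the sign $\bar d v_i=-H^*(\B\iota)(x_i)$, which you do repair by $v_i\mapsto -v_i$. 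The paper's proof is shorter but outsources the computation to the cited example and additionally needs the weak equivalence $(\E H\times G)/H\simeq G/H$ from \cite[Proposition~2.9]{RHT}.
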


\begin{proof}
Let
\[
G\to \E G\to \B G
\]
be the universal principal $G$-bundle, and $\B\iota\colon \B H\to \B G$ be the map  induced by the inclusion  $\iota\colon H\to G$. Then the pullback of this  principal $G$-bundle along $\B\iota$ is isomorphic to

\begin{equation}\label{EQ:Principal_bundle}
G\to \frac{\E H\times G}{H}\to \B H.
\end{equation}
Further, the projection map

\[
q\colon \frac{\E H\times G}{H}\to G/H
\]
is a weak homotopy equivalence \cite[Proposition~2.9]{RHT}. Therefore, every Sullivan model for $\frac{\E H\times G}{H}$ is also a Sullivan model for $G/H$.

By \cite[Example~2.72]{FeOpTa}, a model for the principal bundle \eqref{EQ:Principal_bundle} is given by
\begin{equation}\label{Eq:Model_Principal_Bundle}
(\Lambda V_{\B H}, d)\to (\Lambda  V_{\B H}\oplus \Lambda V_{G}, d)\to (\Lambda V_{G}, 0).
\end{equation}
Now we need to describe the differential $d$ for $(\Lambda  V_{\B G}\oplus \Lambda V_{G}, d)$. First note that by Corollary~\ref{C_Sullivan_Model_of_BH}, we have  $(\Lambda V_{\B H}, d)\cong (H^*(\B H, \mathbb{Q}), 0)$. Thus $d|_{\Lambda V_{\B H}}=0$. Let $V_G=\langle v_1,\ldots, v_k\rangle$, and $\Lambda V_{\B G}=\langle x_1,\ldots, x_k\rangle$ where we identify $v_i = sx_i$  with $x_i$ via a $(-1)$-degree shift. Then by \cite[Example~2.72]{FeOpTa}, $d(v_i)=H^*(\B\iota)(x_i)$ as desired.
\end{proof}

Recall that the cohomology of  homogeneous spaces $G/H$ with positive Euler characteristic and with both $G$ and $H$ being connected is concentrated in even degrees (see for example \cite[Section~13, Theorem~2]{On}).  By  Proposition~\ref{P:Sullivan_model_of_a_homogeneous_space} we can easily generalize this fact as follows:

\begin{prop}\label{P:Cohomology_Homogeneous_Sp}
Let $G$ be a compact connected Lie group and $H$ be a closed subgroup of $G$ whose classifying space $\B H$ is a Sullivan space. Assume additionally that $\rank~G=\rank~H$. Then the cohomology of the homogeneous space $G/H$ is concentrated in even degrees.
\end{prop}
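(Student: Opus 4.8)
The plan is to compute the cohomology of the explicit Sullivan model for $G/H$ furnished by Proposition~\ref{P:Sullivan_model_of_a_homogeneous_space} and to recognize it as a Koszul complex on a regular sequence. Write $R := H^*(\B H, \QQ)$. By Corollary~\ref{C_Sullivan_Model_of_BH} the minimal model $(\Lambda V_{\B H}, 0)$ of $\B H$ is isomorphic to the minimal model of $\B H_0$, where $H_0$ is the identity component; since $H_0$ is a compact connected Lie group, $R\cong \QQ[z_1,\ldots,z_m]$ is a polynomial algebra on $m=\rank H_0=\rank H$ generators of even degree, so $V_{\B H}$ sits in even degrees and $R$ has Krull dimension $m$. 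On the other hand $(\Lambda V_G, 0)$ is the minimal model of the compact connected Lie group $G$, so $V_G=\langle v_1,\ldots,v_k\rangle$ has $k=\rank G$ generators, all of odd degree. By Proposition~\ref{P:Sullivan_model_of_a_homogeneous_space} a model for $G/H$ is $(R\otimes \Lambda V_G, d)$ with $d|_R=0$ and $d v_i=\bar x_i:=H^*(\B\iota)(x_i)\in R$; this is precisely the Koszul complex of the homogeneous sequence $\bar x_1,\ldots,\bar x_k$ in $R$.

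Next I would determine when this Koszul complex is cohomologically concentrated in its $\Lambda^0 V_G$-component. The fibration $G/H\hookrightarrow \B H\xrightarrow{\B\iota}\B G$ has fibre with finite-dimensional rational cohomology (as $G/H$ is a compact manifold), and $\B G$ is simply connected, so its Serre spectral sequence has $E_2$-page the finitely generated $H^*(\B G,\QQ)$-module $H^*(\B G,\QQ)\otimes H^*(G/H,\QQ)$. As all pages are then finitely generated over the Noetherian ring $H^*(\B G,\QQ)$, the abutment shows that $R$ is a finitely generated module over $S:=H^*(\B G,\QQ)=\QQ[x_1,\ldots,x_k]$ via $H^*(\B\iota)$. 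Graded Nakayama then gives that $R/(\bar x_1,\ldots,\bar x_k)=R\otimes_S \QQ$ is finite-dimensional, i.e.~$\bar x_1,\ldots,\bar x_k$ generate an ideal primary to the irrelevant maximal ideal of $R$.

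Here the hypothesis $\rank G=\rank H$ enters decisively: it forces $k=m=\dim R$, so that $\bar x_1,\ldots,\bar x_k$ is a homogeneous system of parameters of $R$. Since $R$ is a polynomial algebra, hence Cohen--Macaulay, every homogeneous system of parameters is a regular sequence (see Section~\ref{S:CM}). Consequently the Koszul complex $(R\otimes \Lambda V_G, d)$ is acyclic away from its bottom component, and $H^*(G/H,\QQ)\cong R/(\bar x_1,\ldots,\bar x_k)$. As both $R$ and the classes $\bar x_i$ live in even degrees, this quotient---and therefore $H^*(G/H,\QQ)$---is concentrated in even degrees.

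I expect the regular-sequence step to be the crux: one must argue that the restriction images $\bar x_i$ genuinely form a regular sequence rather than merely cutting out a finite-dimensional quotient. The twin facts that $R$ is finitely generated over $S$ (so the quotient is finite-dimensional) and that the equal-rank hypothesis matches the number of generators to $\dim R$ are what convert ``finite codimension'' into ``system of parameters'', and Cohen--Macaulayness of the polynomial ring then upgrades this to regularity. Everything else---the even/odd degree bookkeeping and the identification of the model with a Koszul complex---is formal once Proposition~\ref{P:Sullivan_model_of_a_homogeneous_space} and Corollary~\ref{C_Sullivan_Model_of_BH} are in hand.
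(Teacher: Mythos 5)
Your proof is correct, and it starts from the same place as the paper---the model $(\Lambda(V_{\B H}\oplus V_G),d)$ of Proposition~\ref{P:Sullivan_model_of_a_homogeneous_space} together with Corollary~\ref{C_Sullivan_Model_of_BH}---but it finishes by a genuinely different route. The paper simply observes that this model is a \emph{pure} Sullivan algebra with equally many even and odd generators (because $\rank G=\rank H$) and then cites \cite[Proposition~32.2]{RHT}, the structure theory of pure algebras, as a black box. You instead unpack that black box: you recognize the model as the Koszul complex of $\bar x_1,\ldots,\bar x_k\in H^*(\B H,\QQ)$, establish that $H^*(\B H,\QQ)$ is a finite module over $H^*(\B G,\QQ)$ (essentially Venkov's theorem \cite{Venkov}, which you reprove via the Serre spectral sequence of $G/H\to\B H\to\B G$), conclude that the $\bar x_i$ form a homogeneous system of parameters, and invoke Cohen--Macaulayness of the polynomial ring to upgrade this to a regular sequence---exactly the fact the paper itself uses later in the proof of Lemma~\ref{L:Reg_Seq_H_is_Reg_Seq_Heven}. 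What the paper's citation buys is brevity; what your argument buys is self-containedness, a transparent role for the equal-rank hypothesis, and strictly more information, namely the explicit ring isomorphism $H^*(G/H,\QQ)\cong H^*(\B H,\QQ)/(\bar x_1,\ldots,\bar x_k)$, which fits well with the Cohen--Macaulay theme of the paper. One simplification is available to you: the spectral-sequence step is avoidable. Since the Koszul differential lowers word-length in $V_G$ by exactly one, the cohomology of the model is graded by word-length, and its word-length-zero piece is precisely $R/(\bar x_1,\ldots,\bar x_k)$; as $H^*(G/H,\QQ)$ is finite-dimensional ($G/H$ being a closed manifold), that quotient is finite-dimensional at once, which is all your system-of-parameters step requires.
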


\begin{proof}
By Proposition~\ref{P:Sullivan_model_of_a_homogeneous_space}, the minimal Sullivan model for $G/H$ is pure (see \cite[p.~435]{RHT}) with as many generators in odd degrees as in even degrees, since $\rank~G=\rank~H$. The result follows by \cite[Proposition 32.2]{RHT}. 
\end{proof}

\begin{prop}\cite[Proposition~8.1]{RHT_II}\label{P:Rationaly_Nilpotent_Equivalent_Def}
A connected CW complex $(Y,\ast)$ pointed by a $0$-cell is rationally nilpotent if and only if $\pi_1(Y,\ast)$ is rationally nilpotent and the representation of $\pi_1(Y,\ast)$ induced by covering transformations in each $H_k(\tilde{Y}, \ast)$ is nilpotent.
\end{prop}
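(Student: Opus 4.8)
The plan is to observe that both characterizations share the hypothesis that $\pi:=\pi_1(Y,\ast)$ be rationally nilpotent, so that the entire content reduces to the following purely homotopy-theoretic equivalence for the simply connected universal cover $\tilde Y$, equipped with the $\pi$-action by deck transformations: the induced representation of $\pi$ is nilpotent on every $\pi_n(\tilde Y)\otimes\QQ$ if and only if it is nilpotent on every $H_k(\tilde Y;\QQ)$. To organize this I would work with the class $\mathcal N$ of \emph{nilpotent} $\QQ[\pi]$-modules, i.e.\ those $M$ with $I^N M=0$ for some $N$, where $I=\ker(\QQ[\pi]\to\QQ)$ is the augmentation ideal. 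First I would record that $\mathcal N$ is a Serre class: it is closed under submodules, quotients and extensions, and---using the Leibniz expansion of $(g-1)$ on a diagonal action---under tensor products over $\QQ$ (if $I^aM=0$ and $I^bN=0$ then $I^{a+b-1}(M\otimes N)=0$). In particular $\mathcal N$ is closed under the symmetric and exterior powers occurring in the homology of Eilenberg--MacLane spaces, and this is exactly what makes the standard homotopy/homology machinery compatible with nilpotency, with no finite-type hypothesis needed for the bookkeeping.

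For the implication ``nilpotent on homotopy $\Rightarrow$ nilpotent on homology'' I would build the rationalized Postnikov tower $\tilde Y\to\cdots\to\tilde Y_n\to\tilde Y_{n-1}\to\cdots$ $\pi$-equivariantly (the tower is natural, so the deck action propagates to every stage and every fibre). Each stage is a principal fibration with fibre the rationalized $K(\pi_n(\tilde Y)\otimes\QQ,n)$, whose rational homology is the free graded-commutative algebra on the nilpotent module $\pi_n(\tilde Y)\otimes\QQ$ and hence lies degreewise in $\mathcal N$. Since the base $\tilde Y_{n-1}$ is simply connected, the Serre spectral sequence of this fibration is a spectral sequence of $\QQ[\pi]$-modules with $E^2_{p,q}=H_p(\tilde Y_{n-1};\QQ)\otimes H_q(\mathrm{fibre};\QQ)$; by induction both tensor factors lie in $\mathcal N$, so $E^2$, every $E^r$ (a subquotient), $E^\infty$, and finally $H_\ast(\tilde Y_n;\QQ)$ (a finite iterated extension of $E^\infty$-terms) lie in $\mathcal N$. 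As $H_k(\tilde Y;\QQ)=H_k(\tilde Y_n;\QQ)$ for $n>k$, this yields nilpotency on all of $H_\ast(\tilde Y;\QQ)$.

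For the converse I would prove a mod-$\mathcal N$ Hurewicz statement by induction on $n$: assuming $\pi_i(\tilde Y)\otimes\QQ\in\mathcal N$ for $i<n$, the previous paragraph applied to the Postnikov section $\tilde Y_{n-1}$ gives $H_\ast(\tilde Y_{n-1};\QQ)\in\mathcal N$. Let $F$ be the fibre of $\tilde Y\to\tilde Y_{n-1}$; it is $(n-1)$-connected with $H_n(F;\QQ)\cong\pi_n(\tilde Y)\otimes\QQ$ by the rational Hurewicz theorem. In the range where $F$ has no lower reduced homology, the Serre spectral sequence degenerates into the $\pi$-equivariant exact sequence
\[
H_{n+1}(\tilde Y;\QQ)\to H_{n+1}(\tilde Y_{n-1};\QQ)\to H_n(F;\QQ)\to H_n(\tilde Y;\QQ)\to H_n(\tilde Y_{n-1};\QQ),
\]
which exhibits $H_n(F;\QQ)$ as an extension of a submodule of $H_n(\tilde Y;\QQ)$ by a quotient of $H_{n+1}(\tilde Y_{n-1};\QQ)$. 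Both ends lie in $\mathcal N$---the first by hypothesis, the second by the induction---so $\pi_n(\tilde Y)\otimes\QQ\cong H_n(F;\QQ)\in\mathcal N$, completing the induction.

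The step I expect to be the main obstacle is the foundational one: verifying that $\mathcal N$ is genuinely closed under all the operations invoked (the tensor-product estimate and its consequence for the homology of Eilenberg--MacLane spaces), and ensuring that the Postnikov tower and the fibrations above can be realized $\pi$-equivariantly, so that their Serre spectral sequences are honestly sequences of $\QQ[\pi]$-modules with the diagonal action. Once this is in place the two spectral-sequence arguments are formal. As an alternative more in the spirit of the rest of the paper, one may instead pass to a $\pi$-equivariant minimal Sullivan model $(\Lambda V,d)$ of $\tilde Y$, for which $V\cong(\pi_\ast(\tilde Y)\otimes\QQ)^\vee$ and $H^\ast(\Lambda V,d)\cong H^\ast(\tilde Y;\QQ)$; since $\mathcal N$ is stable under dualization and under the free graded-commutative construction, nilpotency on $V$ propagates to $\Lambda V$ and hence to its cohomology, while the inductive reconstruction of $V$ from $H^\ast$ and the lower $V$'s (via kernels, cokernels and extensions of $\QQ[\pi]$-modules) yields the reverse implication.
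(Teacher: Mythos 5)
There is no proof in the paper to compare against: Proposition \ref{P:Rationaly_Nilpotent_Equivalent_Def} is imported verbatim from \cite[Proposition~8.1]{RHT_II} and used as a black box (for instance in the proof of Proposition \ref{P:non-simply-connected_homogeneous_space_Rationally_Nilpotent}). Judged as an independent proof, your argument is essentially correct and proceeds along the classical topological route rather than through the machinery of \cite{RHT_II}: you correctly reduce to the equivalence ``the deck action is nilpotent on every $\pi_n(\tilde Y)\otimes\QQ$ if and only if it is nilpotent on every $H_k(\tilde Y;\QQ)$'', you verify that the nilpotent $\QQ[\pi]$-modules form a Serre class closed under tensor products (your exponent $a+b-1$ follows from the filtration $\sum_{i+j=p}I^iM\otimes I^jN$) and hence under the free graded-commutative algebras computing $H_*(K(V,n);\QQ)$, and you then run the two standard inductions: the Serre spectral sequence up the Postnikov tower for ``nilpotent on homotopy $\Rightarrow$ nilpotent on homology'', and the low-degree Serre exact sequence of the $(n-1)$-connected fibre together with Hurewicz for the converse. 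This is the style of argument found in the nilpotent-spaces literature (cf.\ \cite{HiMiRo}); what it buys is a self-contained, elementary proof, whereas the paper's citation simply outsources the work.

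Three points in your write-up need care. First, in the converse you should not mix an integral total space with a \emph{rationalized} Postnikov stage: the fibre of $\tilde Y\to(\tilde Y_{n-1})_\QQ$ is not $(n-1)$-connected, since it acquires torsion and divisible homotopy in low degrees. Work either entirely with integral Postnikov sections and $\QQ$-coefficient homology, or entirely with $\tilde Y_\QQ$, using the natural $\pi$-equivariant isomorphism $H_*(\tilde Y;\QQ)\cong H_*(\tilde Y_\QQ;\QQ)$; either repair is routine. Second, the equivariance issue you flag as the main obstacle is genuine but standard: functorial Postnikov towers (coskeleta of a functorial Kan replacement) and functorial rationalization (Bousfield--Kan $\QQ$-completion) carry the deck action on the nose, and a group acting on a fibration over a simply connected base acts on its Serre spectral sequence with the diagonal action on $E^2$. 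Third, your closing alternative via a strictly $\pi$-equivariant \emph{minimal} Sullivan model is the one step I would not trust as stated: minimal models are functorial only up to homotopy, so an honest $\pi$-action on $(\Lambda V,d)$ for a possibly infinite deck group is not automatic and would itself require justification; fortunately the topological argument does not need it.
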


Let us use the shorthand notation PCHS for a ``positively curved homogeneous space" in the following.
\begin{prop}\label{P:non-simply-connected_homogeneous_space_Rationally_Nilpotent}
Let $K/H$ be an odd dimensional PCHS.
Then $K/H$ is rationally nilpotent.
\end{prop}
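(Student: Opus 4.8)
The plan is to reduce, via Proposition~\ref{P:Rationaly_Nilpotent_Equivalent_Def}, to a statement about the deck action on rational cohomology, to identify that action with a conjugation, and then to exploit positive curvature together with the classification \cite{WZ}. First, since $K/H$ is a compact positively curved manifold, Bonnet--Myers gives that $\Gamma:=\pi_1(K/H)$ is finite; as $H_*(\B\Gamma;\QQ)=\QQ$ for any finite group, $\Gamma$ is rationally nilpotent. By Proposition~\ref{P:Rationaly_Nilpotent_Equivalent_Def} it remains to show that the covering-transformation representation of $\Gamma$ on each $H^k(\widetilde{K/H};\QQ)$ is nilpotent. Working over $\QQ$ with $\Gamma$ finite, Maschke's theorem makes every such representation semisimple, and a semisimple nilpotent action is trivial; so it suffices to prove that \emph{$\Gamma$ acts trivially on $H^*(\widetilde{K/H};\QQ)$}.

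Next I would identify this action. Replacing $K$ by its universal cover $\widetilde K$ (simply connected compact) and letting $\widetilde H$ be the preimage of $H$, one has $\widetilde{K/H}=\widetilde K/\widetilde H_0$ with $\Gamma=\widetilde H/\widetilde H_0$ acting by right translations $R_h$, $h\in\widetilde H$. Since $\widetilde K$ is connected, $L_h\simeq\mathrm{id}$, and from $c_h=L_h\circ R_{h^{-1}}$ one gets $R_h\simeq c_{h^{-1}}$ on cohomology. Feeding this into the model of Proposition~\ref{P:Sullivan_model_of_a_homogeneous_space}, the conjugation $c_h$ acts trivially on $H^*(\B\widetilde K;\QQ)$ (an inner automorphism of a connected group), and on $H^*(\B\widetilde H_0;\QQ)=S(\mathfrak t^*)^{W}$ through the outer action on the Cartan subalgebra $\mathfrak t$ of $\widetilde H_0$. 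Consequently the $\Gamma$-action on $H^*(\widetilde{K/H};\QQ)$ fixes every transgressive generator $v_i$ and the whole image of $H^*(\B\widetilde K)$, and is induced solely by conjugation on $H^*(\B\widetilde H_0;\QQ)$.

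I would then invoke Synge's theorem: an odd-dimensional closed positively curved manifold is orientable, so $\Gamma$ acts trivially on the top class $H^n(\widetilde{K/H};\QQ)$, where $n=\dim\widetilde{K/H}$. By the classification \cite{WZ}, $\widetilde{K/H}$ is rationally one of $S^{2n+1}$, the Aloff--Wallach type $\sphere^2\times\sphere^5$, the Berger $B^7$ (rationally $\sphere^7$), or the Berger $B^{13}$ (rationally $\CP^2\times\sphere^9$). In the rational-sphere cases the cohomology sits only in degrees $0$ and $n$, and we are done. In the two remaining cases the ring is generated by a degree-two class $x$ together with the orientation class; Poincar\'e duality and $\Gamma$-triviality on the top degree reduce everything to the action on $H^2(\widetilde{K/H};\QQ)$. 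Since $\widetilde K$ is semisimple, $H^2(\B\widetilde K)=0$, whence $H^2(\widetilde{K/H};\QQ)\cong\mathfrak z^*$ is dual to the centre $\mathfrak z=\mathfrak z(\mathfrak h_0)$, with $\Gamma$ acting by the conjugation representation on $\mathfrak z$.

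The main obstacle is precisely this last point---that $\Gamma$ acts trivially on the one-dimensional centre $\mathfrak z(\mathfrak h_0)$ (equivalently, by $+1$ rather than $-1$). The finite deck group is realised inside $N_{\widetilde K}(\widetilde H_0)/\widetilde H_0$; contributions from the centre $Z(\widetilde K)$ act by inner automorphisms, hence trivially, and contributions from the connected centraliser of the central torus of $\widetilde H_0$ act trivially since each such element is joined to the identity. The only way to obtain a nontrivial ($-1$) action would be an element inverting the central circle, i.e.\ a Weyl-group element of $\widetilde K$ sending the isotropy weight to its negative. For the positively curved isotropy data this is impossible: for $\SU(3)/S^1_{p,q}$ it would force the multiset $\{p,q,-(p{+}q)\}$ to be symmetric under negation, excluded by the positive-curvature condition that $p,q,p{+}q$ be nonzero of the same sign, and the analogous check rules it out for $\SU(5)/(\Sp(2)\cdot S^1)$. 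Hence the conjugation action on $\mathfrak z(\mathfrak h_0)$ is trivial, giving triviality on $H^2$ and, by the duality reduction, on all of $H^*(\widetilde{K/H};\QQ)$, which completes the argument.
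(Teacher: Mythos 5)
Your proposal takes a genuinely different route from the paper. The paper makes the same first reduction (finite $\pi_1$ plus Proposition~\ref{P:Rationaly_Nilpotent_Equivalent_Def}, so it suffices that deck transformations act trivially on the rational homology of the universal cover), but then it quotes the classification of \emph{non-simply-connected} odd-dimensional PCHSs (universal cover $S^{2n+1}$ or $W^7_{p,q}$) and finishes softly: for space forms, the Lefschetz fixed point theorem shows every free homeomorphism of $S^{2n+1}$ preserves orientation, hence is homotopic to the identity; for Aloff--Wallach quotients it invokes Shankar's computation of the isometry groups to place the deck group in the identity component. You instead identify each deck transformation, up to free homotopy, with a conjugation $c_h$, $h\in\widetilde H$, and compute that action via the Sullivan model of Proposition~\ref{P:Sullivan_model_of_a_homogeneous_space}, Synge orientability, and the ring structure of the possible universal covers, reducing everything to the sign of the adjoint action on the one-dimensional centre $\mathfrak{z}(\mathfrak{h}_0)$ and excluding $-1$ by Weyl-group combinatorics on the isotropy weights. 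Your treatment of the rational-sphere case (Synge alone, no Lefschetz) is cleaner than the paper's, and your reduction of the normalizer action to Weyl elements modulo the centraliser of the central torus is standard and fine, though glossed. However, as written the argument has two genuine gaps.

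First, the parenthetical claim that $\widetilde K$ is simply connected \emph{and compact} is false in general: the given transitive group $K$ may have positive-dimensional centre (e.g.\ $U(2)$ acting on a lens space, or $U(3)$ on an Aloff--Wallach space), in which case its universal cover is non-compact and non-semisimple, breaking your later appeals to ``$\widetilde K$ semisimple'', to $H^2(\B\widetilde K)=0$, and to the Weyl group of $\widetilde K$. This is fixable: since $\pi_1(K/H)$ is finite, the commutator subgroup $[K,K]$ still acts transitively (the base of the fibration $K/H\to K/([K,K]H)$ is a torus receiving a surjection from the finite group $\pi_1(K/H)$, hence a point), so one may assume $K$ semisimple from the start---but this reduction must be made explicitly.

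Second, and more seriously, your decisive Weyl-group check is performed only for the standard presentations $\SU(3)/S^1_{p,q}$ and $\SU(5)/(\Sp(2)\cdot S^1)$, whereas in your setup $\widetilde K/\widetilde H_0$ is whatever presentation of the universal cover is induced by the \emph{given}, arbitrary group $K$. These need not coincide: for instance $\SU(3)\times\SU(2)$ acts transitively on $W^7_{1,1}$ (the extra $\SO(3)=N_{\SU(3)}(H)/H$ acts by right translations), yielding a presentation whose isotropy is a twisted copy of $U(2)$, to which neither of your two computations literally applies. In that particular case one can check that the centre of the isotropy algebra still lies in the $\mathfrak{su}(3)$ factor, so the same combinatorics forbids inversion; but to close the argument in general you must either enumerate all transitive presentations of $W^7_{p,q}$ and $B^{13}$---which rests on the classification of transitive actions/isometry groups, i.e.\ essentially the same external input (Shankar) that the paper's proof uses---or give a presentation-independent argument. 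Until one of these is supplied, the sentence ``the analogous check rules it out'' is an assertion rather than a proof, and it sits exactly at the point where the paper instead cites Shankar.
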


\begin{proof}
If $K/H$ is simply-connected, then it is nilpotent and, in particular, rationally nilpotent. Hence, we assume that $K/H$ is not simply-connected. By the classification of PCHSs, an odd dimensional non-simply-connected PCHS is covered by $S^{2n+1}$ or by an Aloff-Wallach space $W^{7}_{p,q}$.  First note that by the Lefschetz Fixed Point Theorem, every homeomorphism $f\colon S^{2n+1}\to S^{2n+1}$ which does  not have a fixed point is orientation-preserving. Therefore, every  covering transformation of the covering $S^{2n+1}\to S^{2n+1}/\Gamma$ is homotopy equivalent to the identity, and hence induces the identity on homology groups. By Proposition~\ref{P:Rationaly_Nilpotent_Equivalent_Def} we conclude that $S^{2n+1}/\Gamma $ is rationally nilpotent since $\Gamma$ is finite.  In  the case that the universal covering space  is an Aloff--Wallach space, the classification of the isometry groups of simply-connected positively curved homogeneous spaces  \cite[Subsections~4.8 and 4.9]{Shankar} shows that the deck transformation groups of the coverings $W^{7}_{p,q}\to W^{7}_{p,q}/\Gamma$ are contained  in the identity component of the isometry groups. Therefore, each covering transformation is homotopic to the identity and hence induces the  identity on homology groups. Similarly, since the fundamental group is finite,  Proposition~\ref{P:Rationaly_Nilpotent_Equivalent_Def} gives that $W^{7}_{p,q}/\Gamma$ is rationally nilpotent.
\end{proof}

\begin{rem}
One can prove Proposition~\ref{P:non-simply-connected_homogeneous_space_Rationally_Nilpotent} without using Proposition~\ref{P:Rationaly_Nilpotent_Equivalent_Def} as follows. First note that since the fundamental group of a PCHS is finite, it is rationally nilpotent. Therefore, we only need to prove that the action of the fundamental group of a PCHS on its universal covering (via covering transformations) is nilpotent. As pointed out in the proof of  Proposition~\ref{P:non-simply-connected_homogeneous_space_Rationally_Nilpotent}, each covering transformation is freely homotopic to the identity (in fact $N_{G}(H)/H$ is contained in the identity component of the isometry group of $G/H$---see  \cite{Shankar}). Let $f\colon (\tilde{X}, \tilde{x}_0)\to (\tilde{X}, \tilde{x}_1)$ be a covering transformation corresponding to $[\gamma]\in \pi_1(X, x_0)$. Let $\tilde{\gamma}$ be the lift of $\gamma$  under the universal covering map emanating from $\tilde{x}_0$.  Thus $\tilde{\gamma}$ connects $\tilde{x}_0$ to $\tilde{x}_1$. Now using $\tilde{\gamma}$, we can define an isomorphism
\begin{align*}
\tilde{\gamma}_*\colon \pi_n(\tilde{X}, \tilde{x}_1)&\to \pi_n(\tilde{X}, \tilde{x}_0)\\
[\beta] &\mapsto [\tilde{\gamma}.\beta],
\end{align*}
where $\tilde{\gamma}.\beta\colon (S^n, s_0)\to (\tilde{X}, \tilde{x}_0)$ is the map defined in \cite[Page~341]{Hatcher}. Notice that $\tilde{\gamma}.\beta$ and $\beta$ are freely homotopic. Now define the action of the group of covering transformations on $\pi_n(\tilde{X}, \tilde{x}_0)$ via the map $\tilde{\gamma}_*\circ f_{\sharp}$.
We want to show that $\tilde{\gamma}_*\circ f_{\sharp}$ is the identity map of $\pi_n(\tilde{X}, \tilde{x}_0)$. By \cite[Proposition~4A.2]{Hatcher}, there is a bijection
\[
\Phi\colon \pi_n(\tilde{X}, \tilde{x}_0)\to [S^n, \tilde{X}],
\]
where $[S^n, \tilde{X}]$ is the space of free homotopy classes of maps from $S^n$ to $\tilde{X}$. Since $f$ is freely homotopic to the identity, we have that $f\circ \alpha$ is freely homotopic to $\alpha$ for $\alpha\in \pi_n(\tilde{X}, \tilde{x}_0)$. Thus $\Phi([\alpha])=[f\circ \alpha]$. Further, $\tilde{\gamma}.f\circ \alpha$ is freely homotopic to $f\circ \alpha$, and $\Phi([\tilde{\gamma}.f\circ \alpha])=[f\circ \alpha]$. Since $\Phi$ is a bijection, we have that $[\tilde{\gamma}.f\circ \alpha]=[\alpha]$ in $\pi_n(\tilde{X}, \tilde{x}_0)$. That is $\tilde{\gamma}_*\circ f_{\sharp}[\alpha]=[\tilde{\gamma}.f\circ \alpha]=[\alpha]$.
\end{rem}

\begin{prop}\label{L:Even_Dim_Nilpotency}
Non-simply-connected even-dimensional positively curved Riemannian manifolds are not nilpotent.
\end{prop}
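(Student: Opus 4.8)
The plan is to reduce everything to the behaviour of the generator of $\pi_1(M)$ on the top homology of the universal cover, the prototype being the round $\RP^2$. First I would observe that $M$ is automatically closed: assuming $M$ complete (which holds in our applications, e.g.\ for homogeneous spaces), a noncompact complete positively curved manifold is diffeomorphic to Euclidean space by the Gromoll--Meyer theorem and hence simply connected; since $M$ is assumed non-simply-connected, it must be compact. Now Synge's theorem applies. Because $M$ is even-dimensional, closed and positively curved, it cannot be orientable without being simply connected; as it is not simply connected, it is non-orientable. Its orientation double cover $\hat M$ is then a connected twofold cover which is again closed, even-dimensional, positively curved, and now orientable, so a second application of Synge's theorem shows $\hat M$ simply connected. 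Hence $\hat M$ is the universal cover $\tilde M$ and $\pi_1(M)\cong\Z/2$, generated by the nontrivial deck transformation $\sigma$.

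Next I would compute the induced action on homology. Writing $\dim M = 2m$, the cover $\tilde M$ is a closed orientable $2m$-manifold, so $H_{2m}(\tilde M;\Z)\cong\Z$ is generated by a fundamental class. By the very construction of the orientation cover, $\sigma$ is orientation-reversing, so $\sigma_*$ acts on $H_{2m}(\tilde M;\Z)\cong\Z$ as multiplication by $-1$. This $\Z/2$-representation is not nilpotent: the augmentation ideal $I$ of $\Z[\Z/2]$ is generated by $\sigma-1$, and since $(\sigma-1)\cdot n = -2n$ one gets $I^k\cdot\Z = 2^k\Z\neq 0$ for all $k$, so no finite filtration by invariant subgroups with trivial quotients can exist. (The same computation over $\QQ$ shows that the action is not even rationally nilpotent, which parallels the contrast with the odd-dimensional case in Proposition~\ref{P:non-simply-connected_homogeneous_space_Rationally_Nilpotent}.)

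Finally, I would invoke the standard characterization of nilpotent spaces: a nilpotent space $Y$ has $\pi_1(Y)$ acting nilpotently on $H_n(\tilde Y;\Z)$ for every $n$. Since this fails for $M$ at $n=2m$ by the previous step, $M$ is not nilpotent.

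The hard part will be the passage from the definition of a nilpotent space---a nilpotent $\pi_1$ acting nilpotently on each higher homotopy group $\pi_n(\tilde M)$---to the nilpotency of the induced action on the integral homology $H_*(\tilde M;\Z)$, which is the form exploited above. I would settle this either by citing the classical equivalence (as in Hilton--Mislin--Roitberg) or, to remain self-contained, by pushing the nilpotent action along the principal Postnikov tower of a nilpotent space, where nilpotency of the action on homotopy groups transfers step by step to the homology of the successive stages and hence to $H_*(\tilde M;\Z)$. Everything else---Synge's theorem, the Gromoll--Meyer reduction, and the orientation-reversal computation---is routine.
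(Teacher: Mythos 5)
Your proof is correct and follows essentially the same route as the paper's: Synge's theorem yields $\pi_1(M)=\mathbb{Z}_2$ and non-orientability, the nontrivial deck transformation acts by $-1$ on the top homology of the universal cover, and the Hilton--Mislin--Roitberg criterion (a nilpotent space has nilpotent $\pi_1$-action on the homology of its universal cover) closes the argument. The only notable variation is that the paper invokes the Weinstein fixed-point theorem to see that the deck transformation reverses orientation, whereas you deduce it more elementarily from the universal cover being the orientation double cover; this, the explicit augmentation-ideal computation, and the Gromoll--Meyer reduction to the closed case are harmless (indeed welcome) refinements of the same argument.
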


\begin{proof}
Let $M^{2n}$ be a non-simply--connected positively curved Riemannian manifold. By Synge's Theorem $M^{2n}$ is not orientable and $\pi_{1}(M)=\mathbb{Z}_{2}$. Let $g\in \pi_{1}(M)$  be the non-trivial  element.  Then  by the Weinstein Theorem the covering transformation $\mu_{g}\colon \tilde{M}\to \tilde{M}$ is an orientation--reversing isometry, where $\tilde{M}$ is the universal covering of $M$.  Consider the fibration
$$(F, \ast)\to (M\times_{B}MB, \ast)\to  (B, \ast)$$
as in \cite[Section~3.2]{RHT_II}, where $B=\B  \pi_{1}(M, \ast)$ and $F$ is weakly equivalent to $\tilde{M}$. By \cite[Lemma~4.3]{RHT_II} and the fact that $\mu_{g}$ is an orientation-reversing map we conclude that the action of $\pi_{1}(M, \ast)$ on $H_{*}(\tilde{M})$ is not nilpotent. Therefore, by \cite[Lemma~2.18]{HiMiRo}, $M$ is not nilpotent (cf.  \cite[Page~80]{FeOpTa}).
\end{proof}

\begin{prop}\label{L:Even_Dim_Sullivan}
Let $M=K/H$ be a  non-simply-connected even-dimensional positively curved homogeneous space. Then at least one of the spaces  $\B K$  and  $\B H$ is not a Sullivan space.
\end{prop}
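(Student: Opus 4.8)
The plan is to argue by contradiction: assuming both $\B K$ and $\B H$ are Sullivan spaces, I will show that this forces the universal covering of $M$ to be a rational cohomology isomorphism, in violation of Synge's and Weinstein's theorems. The guiding principle is that once $\B H$ is Sullivan, the rational model of $K/H$ produced by Proposition~\ref{P:Sullivan_model_of_a_homogeneous_space} cannot distinguish $H$ from its identity component $H_0$, so rationally $M$ looks exactly like its orientable double cover. (One could instead invoke the classification, by which $M\approx\RP^{\dim M}$, but the intrinsic argument below is cleaner and self-contained.)

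First I would reduce to the case that $K$ is connected. The identity component $K_0$ still acts transitively on $M$ (its orbit through the basepoint is open and closed, hence all of the connected space $M$), so $M = K_0/H'$ with $H' := K_0\cap H$ and $(H')_0 = H_0$. By Synge's Theorem $M$ is non-orientable with $\pi_1(M)=\ZZ_2$, and by Weinstein's Theorem the nontrivial deck transformation $\sigma$ of the universal covering $\pi\colon \tilde M\to M$ is orientation-reversing, exactly as in the proof of Proposition~\ref{L:Even_Dim_Nilpotency}. Since a homogeneous space of a connected group with connected isotropy is orientable, $H'$ must be disconnected; as $\pi_1(M)=\ZZ_2$ this forces $H'/H_0\cong\ZZ_2$ and identifies $\tilde M$ with $K_0/H_0$, the unique connected double cover, so that $\pi$ is the projection $q\colon K_0/H_0\to K_0/H'$. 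In particular $\sigma^*$ acts as $-1$ on $H^{\dim M}(\tilde M,\QQ)\cong\QQ$.

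The heart of the argument is to propagate the Sullivan hypothesis to $H'$ and then compare models. From Corollary~\ref{C_Sullivan_Model_of_BH}, $\B H$ being Sullivan means $H^*(\B H,\QQ)\to H^*(\B H_0,\QQ)$ is an isomorphism; as this map is the inclusion of the $\pi_0(H)$-invariants, $\pi_0(H)$ acts trivially on $H^*(\B H_0,\QQ)$. The $\pi_0(H')$-action factors through $\pi_0(H')\to\pi_0(H)$, hence is trivial as well, so by Proposition~\ref{P:Rationaly_Nilpotent_Equivalent_Def} the space $\B H'$ is rationally nilpotent and therefore Sullivan by Proposition~\ref{P:Rationally_Nilpotent_is_Sullivan}. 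Now Proposition~\ref{P:Sullivan_model_of_a_homogeneous_space} applies to both $K_0/H'$ and $K_0/H_0$; since the identification $H^*(\B H',\QQ)\cong H^*(\B H_0,\QQ)$ intertwines the two differentials $v_i\mapsto H^*(\B\iota_{H'})(x_i)$ and $v_i\mapsto H^*(\B\iota_{H_0})(x_i)$ (because $\iota_{H_0}$ factors as $H_0\hookrightarrow H'\hookrightarrow K_0$), the two models are isomorphic via a model for $q$, and the covering map induces an isomorphism $q^*\colon H^*(K_0/H',\QQ)\xrightarrow{\cong}H^*(K_0/H_0,\QQ)$.

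This is the desired contradiction: $q=\pi$ is the universal covering with orientation-reversing deck transformation, so $\sigma^*=-1$ on the top rational cohomology of $\tilde M$ and hence $H^{\dim M}(M,\QQ)=H^{\dim M}(\tilde M,\QQ)^{\ZZ_2}=0$, whereas $q^*$ being an isomorphism would make $\ZZ_2$ act trivially there; equivalently, $M$ would be rationally nilpotent, contradicting (the proof of) Proposition~\ref{L:Even_Dim_Nilpotency}. Therefore $\B H$ cannot be Sullivan, which in particular proves that at least one of $\B K$, $\B H$ is not a Sullivan space. I expect the main obstacle to be precisely this propagation step: verifying that the Sullivan condition on $\B H$ trivializes the $\pi_0$-monodromy and thereby renders the algebraic model blind to the non-orientability that Weinstein's theorem detects geometrically. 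The identification of $K_0/H_0$ with the universal cover and the compatibility of the differentials under passage to identity components are the technical points requiring care.
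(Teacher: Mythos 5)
Your proof is correct and follows essentially the same route as the paper's: argue by contradiction, pass to $K_0$ and $H'=K_0\cap H$, propagate the Sullivan hypothesis to $\B H'$, apply Proposition~\ref{P:Sullivan_model_of_a_homogeneous_space} together with Corollary~\ref{C_Sullivan_Model_of_BH} to identify the rational cohomology of $M$ with that of $K_0/H_0$, and derive a contradiction with the non-orientability of $M$ (Synge) in top degree. The deviations are cosmetic---you exclude the case $K_0/H_0=M$ at the outset via orientability of homogeneous spaces with connected isotropy, where the paper instead invokes simplicity and Proposition~\ref{L:Even_Dim_Nilpotency}, and you re-derive the Sullivan property of $\B H'$ through rational nilpotence where the paper just cites Proposition~\ref{P:Covering_of_a_Sullivan_space}; only your unused parenthetical that the classification forces $M\approx\RP^{\dim M}$ is false (e.g.\ $\CP^{3}/\ZZ_2$ is an even-dimensional non-simply-connected positively curved homogeneous space).
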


\begin{proof}
By contradiction assume that both $\B K$  and  $\B H$ are Sullivan spaces. Then by Proposition~\ref{P:Covering_of_a_Sullivan_space}, $\B (K_0\cap H)$, being a covering space of $\B H$, is a Sullivan space.  Since  $M=K_0/K_0\cap H$, we can apply   Proposition~\ref{P:Sullivan_model_of_a_homogeneous_space} and Corollary~\ref{C_Sullivan_Model_of_BH} to obtain a Sullivan model for $M$ given by
$(\Lambda V_{\B H_0}\otimes \Lambda V_{K_0}, d)$.
Hence $H^{*}(K_{0}/H_{0}, \QQ)=H^{*}(M, \QQ)$.
Further, since $K_0/H_0$ is a covering space of $K_0/K_0\cap H=M$, it is either homeomorphic to $M$ itself or to its universal covering space $\tilde{M}$, as $\pi_1(M)=\ZZ_2$.  

Either case, however,  yields a contradiction: If $K_0/H_0=M$, then $M$ is a simple space \cite[Proposition~1.62 and Example~1.63]{FeOpTa} and therefore, nilpotent, contrary to Proposition~\ref{L:Even_Dim_Nilpotency}. If $K_0/H_0=\tilde M$, we obtain  that $H^{*}(\tilde{M}, \QQ)=H^{*}(M, \QQ)$.
\end{proof}


\section{ A model for a double mapping cylinder}\label{S_ Sullivan_model_for_double_mapping_cylinder}

Let $X$ be  
a generalized cohomogeneity one space with  group diagram $(G, H, K^{-}, K^{+})$ as in Decomposition \eqref{Eq:decomposition_2}. In this section based on \cite{GroveHalperin} and \cite[Chapter~13]{RHT},  we  present  a commutative differential graded algebra model for  the Borel construction $X_{G}=\frac{\E G\times X}{G}$ and then compute the equivariant cohomology of $X$, i.e.,  $H^{*}(X_{G}, \QQ)$, which is isomorphic  to the cohomology of this model. Indeed, we show that $X_{G}$ can be expressed as a \emph{double mapping cylinder}.  Let us construct a model for such a double mapping cylinder in the following.
We refer the reader to \cite{GroveHalperin} and \cite[Chapter~13]{RHT} for more details.

\vspace{3mm}

Note that the fibers $ K^{\pm}/H$ in the definition of a generalized cohomogeneity one space are not necessarily connected. However, from now on we assume that $K^{\pm}/H$ are connected, unless stated otherwise.

\vspace{3mm}

Let $(Z, Y)$ be a topological pair and $f\colon Y\to W$ be a continuous map. Let
$W\cup_{f}Z$ be the \emph{adjunction space|} by attaching $Z$ to $W$ along $f$. Now consider the following diagram.

\begin{equation}\label{Eq:fiber_Product}
A_{PL}(Z)\xrightarrow{A_{PL}(i)}A_{PL}(Y)\xleftarrow{A_{PL}(f)}A_{PL}(W).
\end{equation}

Let
$$A_{PL}(Z)\times_{A_{PL}(Y)}A_{PL}(W)=\{(\alpha, \beta)\in A_{PL}(Z)\times A_{PL}(W)\mid A_{PL}(i)(\alpha)= A_{PL}(f)(\beta)\}$$
be the \emph{fiber product} corresponding to Equation~\eqref{Eq:fiber_Product}. Now we determine the  relation between $A_{PL}(Z)\times_{A_{PL}(Y)}A_{PL}(W)$ and $A_{PL}(W\cup_{f}Z)$ as   in \cite{RHT}. First consider the following diagram

\begin{figure}[H]
\centering
\centering
\begin{tikzpicture}[]
\draw [->] (-1.6, 0) -- (-.6,0) ;
\node at (-1.7,0) [left ]{$\ \ \ \ Y$};
\node at (-.4,0)  {$\ \ \ \ Z$};
\draw [->] (-1.6,-2) -- (-0.6,-2) ;
\node at (-1.7,-2) [left ]{$\ \ \ \ W$};
\node at (0,-2)  {$\ \ \ \ W\cup_{f}Z,$};
\draw [->] (-2, -.3) -- (-2,-1.7) ;
\draw [->] (-.2, -.3) -- (-0.2,-1.7) ;
\node at (-1.4, 0) [above ]{$\ \ \ \ i$};
\node at (-1.4,-2) [below ]{$\ \ \ \ i_{W}$};
\node at (-0.6, -1) [right ]{$\ \ \ \ f_{Z}$};
\node at (-2.1,-1) [left]{$\ \ \ \ f$};
\end{tikzpicture}
\end{figure}
which induces the diagram
\begin{figure}[H]
\centering
\centering
\begin{tikzpicture}[]
\draw  [->] (-.6,0) --  (-1.6, 0) ;
\node at (-1.7,0) [left ]{$\ \ \ \ A_{PL}(Y)$};
\node at (0,0)  {$\ \ \ \ A_{PL}(Z)$};
\draw [->] (-0.6,-2) -- (-1.6,-2) ;
\node at (-1.7,-2) [left ]{$\ \ \ \ A_{PL}(W)$};
\node at (.5,-2)  {$\ \ \ \ A_{PL}(W\cup_{f}Z).$};
\draw [->]  (-2,-1.7) -- (-2, -.3) ;
\draw [->]  (-0.2,-1.7) -- (-.2, -.3) ;
\end{tikzpicture}
\label{F:APL_of_Adjunction_Diagram}
\end{figure}
Then we have the following morphism
$$(A_{PL}(i_{W}), A_{PL}(f_{Z}))\colon A_{PL}(W\cup_{f}Z)\to A_{PL}(Z)\times_{A_{PL}(Y)}A_{PL}(W).$$

\begin{prop}\cite[Proposition~13.5]{RHT}\label{P:fiber_product_adjunction_space}
If $H_{*}(Z, Y; \QQ)\cong H_{*}(W\cup_{f}Z, W; \QQ)$, then the morphism
$(A_{PL}(i_{W}), A_{PL}(f_{Z}))$ is a quasi-isomorphism. Thus the fiber product is a commutative model for the adjunction space.
\end{prop}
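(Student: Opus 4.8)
The plan is to realize both the fiber product and the algebra of forms on the adjunction space as the total complexes in short exact sequences, and then to compare these sequences via the Five Lemma. First I would exploit the fundamental surjectivity property of Sullivan's functor: for the inclusion of a subspace the induced map $A_{PL}(i)\colon A_{PL}(Z)\to A_{PL}(Y)$ is surjective, and likewise $A_{PL}(i_W)\colon A_{PL}(W\cup_f Z)\to A_{PL}(W)$ is surjective. Writing $A_{PL}(Z,Y):=\ker A_{PL}(i)$ and $A_{PL}(W\cup_f Z,W):=\ker A_{PL}(i_W)$ for the relative forms, surjectivity yields two short exact sequences of cochain complexes,
\begin{align*}
0\to A_{PL}(Z,Y)\to A_{PL}(Z)\times_{A_{PL}(Y)}A_{PL}(W)\xrightarrow{\mathrm{pr}} A_{PL}(W)\to 0,
\end{align*}
\begin{align*}
0\to A_{PL}(W\cup_f Z,W)\to A_{PL}(W\cup_f Z)\xrightarrow{A_{PL}(i_W)} A_{PL}(W)\to 0.
\end{align*}
The first sequence is exact because the projection of the fiber product onto its $A_{PL}(W)$-factor is surjective (lift any $\beta$ through the surjection $A_{PL}(i)$) with kernel exactly the pairs $(\alpha,0)$ satisfying $A_{PL}(i)(\alpha)=0$, i.e.\ $A_{PL}(Z,Y)$.

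Next I would observe that the comparison morphism $(A_{PL}(i_W),A_{PL}(f_Z))$ maps the second sequence to the first. On the right-hand quotients it restricts to the identity of $A_{PL}(W)$, since $\mathrm{pr}\circ(A_{PL}(i_W),A_{PL}(f_Z))=A_{PL}(i_W)$. On the kernels it induces a morphism
\begin{align*}
A_{PL}(f_Z)\colon A_{PL}(W\cup_f Z,W)\to A_{PL}(Z,Y),
\end{align*}
well defined because the defining square $f_Z\circ i=i_W\circ f$ gives $A_{PL}(i)\circ A_{PL}(f_Z)=A_{PL}(f)\circ A_{PL}(i_W)$, so $A_{PL}(f_Z)$ carries $\ker A_{PL}(i_W)$ into $\ker A_{PL}(i)$.

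The heart of the argument is that this last morphism is a quasi-isomorphism. Since the relative Sullivan complex computes relative rational cohomology, $H^*(A_{PL}(Z,Y))\cong H^*(Z,Y;\QQ)$ and $H^*(A_{PL}(W\cup_f Z,W))\cong H^*(W\cup_f Z,W;\QQ)$, and under these identifications the map induced by $A_{PL}(f_Z)$ is precisely the pullback $f_Z^*$ along the map of pairs $f_Z\colon (Z,Y)\to (W\cup_f Z,W)$. The standing hypothesis $H_*(Z,Y;\QQ)\cong H_*(W\cup_f Z,W;\QQ)$---which is the excision isomorphism realized by $f_Z$---together with universal coefficients forces $f_Z^*$ to be an isomorphism, so $A_{PL}(f_Z)$ is a quasi-isomorphism on the kernel terms.

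Finally I would pass to the associated long exact cohomology sequences and invoke the Five Lemma. The comparison induces a ladder of long exact sequences that is the identity on the $H^*(A_{PL}(W))$ columns and an isomorphism on the $H^*(A_{PL}(Z,Y))$ columns; hence it is an isomorphism on the middle columns $H^*\bigl(A_{PL}(Z)\times_{A_{PL}(Y)}A_{PL}(W)\bigr)$ as well. This is exactly the statement that $(A_{PL}(i_W),A_{PL}(f_Z))$ is a quasi-isomorphism, so the fiber product is a commutative model for $W\cup_f Z$. The one step demanding genuine care is the identification of the induced map on kernels with the excision map in relative cohomology, i.e.\ checking that the abstract isomorphism in the hypothesis is indeed the one realized by $f_Z$; the remaining steps are formal once the surjectivity of $A_{PL}$ on inclusions and the naturality of the relative-cohomology isomorphism are in place.
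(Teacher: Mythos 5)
Your proof is correct and is essentially the argument behind the cited result (Proposition~13.5 of F\'elix--Halperin--Thomas), which the paper invokes without reproving: compare the two short exact sequences arising from the surjectivity of $A_{PL}$ on inclusions, identify the kernels with relative cochain algebras computing $H^{*}(Z,Y;\QQ)$ and $H^{*}(W\cup_{f}Z,W;\QQ)$, and conclude by the Five Lemma. Your closing caveat is also the right one: the hypothesis must be read as asserting that the isomorphism is the one induced by the map of pairs $f_{Z}\colon (Z,Y)\to (W\cup_{f}Z,W)$, which is how the source intends it.
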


Now we pass to Sullivan models. Suppose that the spaces $Z, Y, W$ are path--connected and

\begin{align*}
m_{W}\colon \Lambda V_{W}\to A_{PL}(W),\quad
m_{Z}\colon \Lambda V_{Z}\to A_{PL}(Z),\quad
m_{Y}\colon \Lambda V_{Y}\to A_{PL}(Y),
\end{align*}
are Sullivan models and
$$
\Lambda V_{W}\xrightarrow{\phi}\Lambda V_{Y}\xleftarrow{\psi}\Lambda V_{Z},
$$
are Sullivan representatives for $f$ and $i$.

\begin{prop}\cite[Proposition~13.6]{RHT}\label{P:Fiber_Products_Sullivan_Spaces}
If $H_{*}(Z, Y; \QQ)\cong H_{*}(W\cup_{f}Z, W; \QQ)$,  and one of the morphisms $\phi$ or $\psi$ is surjective, then $\Lambda V_{W}\times_{\Lambda V_{Y}}\Lambda V_{Z}$ is a commutative model for $W\cup_{f}Z$.
\end{prop}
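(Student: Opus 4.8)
The plan is to reduce everything to the already-established $A_{PL}$-level statement and then transport it across the chosen Sullivan models. By Proposition~\ref{P:fiber_product_adjunction_space} the hypothesis $H_*(Z,Y;\QQ)\cong H_*(W\cup_f Z,W;\QQ)$ guarantees that the $A_{PL}$-fibre product $P:=A_{PL}(Z)\times_{A_{PL}(Y)}A_{PL}(W)$ is a commutative model for $W\cup_f Z$. Hence it suffices to prove that the Sullivan-level fibre product $Q:=\Lambda V_W\times_{\Lambda V_Y}\Lambda V_Z$ is weakly equivalent to $P$.

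The key point is that a fibre product along a surjection computes the correct homotopy type, which is exactly where the surjectivity hypothesis enters. Assume without loss of generality that $\psi$ is surjective (the case of $\phi$ being symmetric). On the topological side $i\colon Y\hookrightarrow Z$ is an inclusion of a pair, so $A_{PL}(i)\colon A_{PL}(Z)\to A_{PL}(Y)$ is surjective as well, since $A_{PL}$ converts injections into surjections. For a fibre product with one surjective leg one has the short exact Mayer--Vietoris sequence
\begin{align*}
0\to Q\to \Lambda V_Z\oplus\Lambda V_W \xrightarrow{(z,w)\mapsto \psi(z)-\phi(w)} \Lambda V_Y\to 0,
\end{align*}
and likewise
\begin{align*}
0\to P\to A_{PL}(Z)\oplus A_{PL}(W)\xrightarrow{(a,b)\mapsto A_{PL}(i)(a)-A_{PL}(f)(b)} A_{PL}(Y)\to 0.
\end{align*}
The models $m_Z,m_W,m_Y$ induce quasi-isomorphisms between the two middle and the two right-hand terms, and the associated long exact cohomology sequences together with the five lemma will then force $H^*(Q)\xrightarrow{\cong}H^*(P)$---provided the two connecting squares commute \emph{strictly}.

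This last proviso is the main obstacle, and I expect it to be the technical heart of the argument: $\phi$ and $\psi$ are only Sullivan \emph{representatives}, so that $A_{PL}(i)\,m_Z\simeq m_Y\psi$ and $A_{PL}(f)\,m_W\simeq m_Y\phi$ hold only up to homotopy, whereas the fibre products are strict limits. I would remove this by strictifying within homotopy classes. Since $\Lambda V_Z$ is cofibrant and $A_{PL}(i)$ is a surjection (hence a fibration), the covering homotopy property lets me replace $m_Z$ by a homotopic quasi-isomorphism $m_Z'$ with $A_{PL}(i)\,m_Z'=m_Y\psi$ on the nose. For the $W$-leg, whose map $A_{PL}(f)$ need not be surjective, I would first invoke the surjective trick (Remark~\ref{R:Surjective_Trick}) to factor $A_{PL}(f)$ as a quasi-isomorphism followed by a surjection; replacing $A_{PL}(W)$ by the resulting intermediate algebra alters $P$ only up to quasi-isomorphism, again because the new leg is surjective, and makes the same covering-homotopy argument available to produce a homotopic $m_W'$. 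In this way both squares become strictly commutative simultaneously.

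With strictly commuting squares in hand, the two short exact sequences assemble into a genuine morphism of short exact sequences whose comparison maps on the middle and right-hand terms are quasi-isomorphisms. Applying the five lemma to the induced map of long exact cohomology sequences shows that the resulting map $Q\to P$ is a quasi-isomorphism. Therefore $Q$ is weakly equivalent to the model $P$, and hence $\Lambda V_W\times_{\Lambda V_Y}\Lambda V_Z$ is itself a commutative model for $W\cup_f Z$, as claimed.
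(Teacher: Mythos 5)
The paper never proves this statement at all---it is quoted verbatim from F\'elix--Halperin--Thomas as \cite[Proposition~13.6]{RHT}---so the only proof to compare against is the textbook's, and your argument is essentially that argument: reduce to Proposition~\ref{P:fiber_product_adjunction_space}, strictify the two homotopy-commutative squares, and then compare the two fiber products along surjective legs. Your reconstruction is correct, and the delicate points are handled properly: $A_{PL}(i)$ is indeed surjective (extendability of $A_{PL}$ applied to the injection $Y\hookrightarrow Z$), the covering-homotopy argument for the $Z$-leg is legitimate since $\Lambda V_Z$ is cofibrant and surjections are fibrations, the surjective-trick factorization handles the $W$-leg, and---crucially---you modify the model maps $m_Z$, $m_W$ rather than the representatives $\psi$, $\phi$, which is what preserves the surjectivity hypothesis on $\psi$. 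The only streamlining worth pointing out is that your Mayer--Vietoris-plus-five-lemma comparison of fiber products is precisely the content of Lemma~\ref{L:Equivalence_of_two_fiber_product} (= \cite[Lemma~13.3]{RHT}), which the paper records right after this proposition; citing it would subsume both your final comparison and the intermediate claim that the surjective trick changes $P$ only up to quasi-isomorphism.
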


Consider the following commutative diagram  of commutative cochain algebras

\begin{figure}[H]
\centering

\centering
\begin{tikzpicture}[]
\draw [->] (-1.6, 0) -- (-.5,0) ;
\draw [->] (2.5, 0) -- (1.4,0) ;
\node at (-1.7,0) [left ]{$\ \ \ \ (C, d)$};
\node at (.2,0)  {$\ \ \ \ (B, d)$};
\node at (2,0) [right] {$\ \ \ \ (A, d)$};
\draw [->] (-1.6,-2) -- (-0.5,-2) ;
\draw [->] (2.5, -2) -- (1.4,-2) ;
\node at (-1.7,-2) [left ]{$\ \ \ \ (C', d)$};
\node at (.2,-2)  {$\ \ \ \ (B', d)$};
\node at (2,-2) [right] {$\ \ \ \ (A', d).$};
\draw [->] (-2.3, -.3) -- (-2.3,-1.7) ;
\draw [->] (0.3, -.3) -- (0.3,-1.7) ;
\draw [->] (3, -.3) -- (3,-1.7) ;
\node at (-1.4, 0) [above ]{$\ \ \ \ \phi$};
\node at (1.6, 0) [above ]{$\ \ \ \ \psi$};
\node at (-1.4,-2) [below ]{$\ \ \ \ \phi'$};
\node at (1.6,-2) [below ]{$\ \ \ \ \psi'$};
\node at (-0.7, -1) [right ]{$\ \ \ \ \beta$};
\node at (2.5, -1) [right ]{$\ \ \ \ \alpha$};
\node at (-2.3,-1) [left]{$\ \ \ \ \gamma$};

\end{tikzpicture}
\end{figure}

The following Lemma shows the relation between the fiber products  $C\times_{B}A$ and
$C'\times_{B'}A'$.

\begin{lem}\cite[Lemma~13.3]{RHT}\label{L:Equivalence_of_two_fiber_product}
If $\beta, \gamma, \alpha$ are quasi-isomorphisms and if one of $\phi, \psi$ and one of $\phi', \psi'$ are surjective, then $(\gamma, \alpha)\colon C\times_{B}A\to
C'\times_{B'}A'$ is a quasi-isomorphism.

\end{lem}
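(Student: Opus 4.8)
The plan is to verify that the morphism $(\gamma,\alpha)\colon C\times_B A\to C'\times_{B'}A'$ is well-defined and then deduce that it is a quasi-isomorphism by a comparison-of-long-exact-sequences argument. First I would check that $(\gamma,\alpha)$ actually maps into the target fiber product: for $(c,a)\in C\times_B A$ we have $\psi(a)=\phi(c)$, and applying the commutativity of the two squares gives $\psi'\alpha(a)=\beta\psi(a)=\beta\phi(c)=\phi'\gamma(c)$, so $(\gamma(c),\alpha(a))\in C'\times_{B'}A'$. This uses only that the diagram commutes, not yet the surjectivity or quasi-isomorphism hypotheses.

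Next I would exploit surjectivity to realize each fiber product as the kernel term of a short exact sequence of cochain complexes. Assume without loss of generality that $\psi$ is surjective (the case of $\phi$ being symmetric, and likewise $\psi'$ or $\phi'$ on the bottom row); then the map $C\times_B A\to C\oplus A$ followed by the difference map $(c,a)\mapsto \phi(c)-\psi(a)$ into $B$ fits into a short exact sequence
\begin{equation*}
0\to C\times_B A\to C\oplus A\xrightarrow{\ \phi-\psi\ } B\to 0,
\end{equation*}
where surjectivity of $\phi-\psi$ is exactly guaranteed by surjectivity of $\psi$ (given any $b\in B$, write $b=\psi(a)$ and map $(0,-a)$). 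The same construction on the primed row, using surjectivity of one of $\phi',\psi'$, gives
\begin{equation*}
0\to C'\times_{B'} A'\to C'\oplus A'\xrightarrow{\ \phi'-\psi'\ } B'\to 0.
\end{equation*}
The maps $(\gamma,\alpha)$ on the left, $\gamma\oplus\alpha$ in the middle, and $\beta$ on the right assemble into a morphism of short exact sequences, which one checks commutes using the commutativity of the two squares of the original diagram.

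Finally I would pass to the associated long exact sequences in cohomology and apply the Five Lemma. Since $\gamma$, $\alpha$, and $\beta$ are quasi-isomorphisms, the induced maps $H^*(\gamma\oplus\alpha)=H^*(\gamma)\oplus H^*(\alpha)$ and $H^*(\beta)$ are isomorphisms; by the Five Lemma the induced map $H^*(C\times_B A)\to H^*(C'\times_{B'}A')$ is then an isomorphism as well, i.e.\ $(\gamma,\alpha)$ is a quasi-isomorphism. I expect the main subtlety—rather than a deep obstacle—to lie in the bookkeeping of which surjectivity hypothesis is needed on which row to guarantee that $\phi-\psi$ (respectively $\phi'-\psi'$) is surjective, so that the short exact sequences genuinely exist; without surjectivity of at least one map per row the difference map need not be onto and the sequence fails to be exact on the right, which is precisely why the hypothesis ``one of $\phi,\psi$ and one of $\phi',\psi'$ are surjective'' enters. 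Everything else is the standard naturality of the Five Lemma.
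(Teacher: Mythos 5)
Your proof is correct and coincides with the argument in the source the paper cites for this lemma (F\'elix--Halperin--Thomas, Lemma~13.3): the paper itself gives no proof, and the cited one proceeds exactly as you do, embedding each fiber product in the short exact sequence $0\to C\times_B A\to C\oplus A\xrightarrow{\phi-\psi} B\to 0$ (using the surjectivity hypothesis on each row) and concluding via the long exact cohomology sequences and the Five Lemma. Nothing is missing.
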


Let $X$ be a generalized cohomogeneity one space given by the group diagram  \linebreak[4]$(G, H, K^-, K^+)$. We now describe a commutative model for $X_{G}$.  First we recall the  construction of a model for
$$A_{1}\xrightarrow{\phi}A\xleftarrow{\psi}A_{2}$$
according to \cite{GroveHalperin}.  We define
\begin{equation}\label{Eq:Grove_Halperin_Model}
D\big((A_{1}, d)\xrightarrow{\phi}(A, d)\xleftarrow{\psi}(A_{2}, d)\big)\colon =(\{(a_{1}, a_{2})\in A_{1}\otimes C\oplus A_{2}\mid \Phi(a_{1})=\psi(a_{2})\}, d),
\end{equation}
where $C$ is a contractible algebra and $\Phi$ is a surjective map induced by $\phi$ (see Remark \ref{R:Surjective_Trick}), and $d$ is the obvious differential.

\begin{prop}\label{P:X_G as a double mapping cylinder}
Let $X$ be a generalized cohomogeneity one space with  group diagram $(G, H, K^{-}, K^{+})$. Then $X_G$ is homeomorphic to the double mapping cylinder of the following fibrations:
\[
K^{\pm}/H\to \B H\to \B K^{\pm}.
\]
\end{prop}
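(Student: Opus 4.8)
The plan is to take the double mapping cylinder decomposition of $X$ itself and push it through the Borel construction $X \mapsto X_G = \E G \times_G X$. Recall from Decomposition \eqref{Eq:decomposition_2} that
\begin{align*}
X = G\times_{K^-}C(K^-/H)\bigcup_{G/H} G\times_{K^+} C(K^+/H),
\end{align*}
so $X$ is the double mapping cylinder of the two $G$-maps $G/K^- \leftarrow G/H \to G/K^+$ induced by the inclusions $H \subseteq K^\pm$. The key observation is that the functor $(-)_G = \E G \times_G (-)$ preserves this kind of colimit: forming the fibrewise cone $G\times_{K^\pm}C(K^\pm/H)$ and then taking $\E G \times_G (-)$ commutes with the gluing, because $\E G \times_G (-)$ is a homotopy colimit-preserving construction (it is a Borel fibration applied levelwise, and the gluing is along the closed $G$-invariant subspace $G/H$). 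Thus $X_G$ is the double mapping cylinder of the maps $(G/K^\pm)_G \leftarrow (G/H)_G \to$ the fibrewise cones.

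First I would identify the pieces individually. For any closed subgroup $L \subseteq G$ one has the standard homotopy equivalence $(G/L)_G = \E G \times_G (G/L) \simeq \B L$, realized via $\E G \times_G (G/L) \cong \E G / L = \B L$. Applying this to $L = H$ and $L = K^\pm$ turns the $G$-orbits $G/H$ and $G/K^\pm$ into the classifying spaces $\B H$ and $\B K^\pm$, and the $G$-map $G/H \to G/K^\pm$ induced by the inclusion $H \hookrightarrow K^\pm$ becomes exactly the fibration $\B \iota^\pm \colon \B H \to \B K^\pm$ on Borel constructions. Second, I would analyze the cones: since $C(K^\pm/H)$ is a cone, the fibre bundle $G\times_{K^\pm}C(K^\pm/H)$ deformation retracts $G$-equivariantly onto its zero section $G/K^\pm$, so after applying $(-)_G$ the cone piece $\bigl(G\times_{K^\pm}C(K^\pm/H)\bigr)_G$ is homotopy equivalent to $(G/K^\pm)_G \simeq \B K^\pm$. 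The homotopy fibre of $\B\iota^\pm \colon \B H \to \B K^\pm$ is precisely $K^\pm/H$, so the mapping cylinder of $\B H \to \B K^\pm$ is (up to homotopy) modelling the attachment of the cone, with $K^\pm/H \hookrightarrow \B H$ the fibre inclusion.

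Assembling these two identifications, $X_G$ becomes the double mapping cylinder of $\B K^- \leftarrow \B H \to \B K^+$, where each arrow $\B H \to \B K^\pm$ is the Borel-construction fibration with fibre $K^\pm/H$; equivalently it is the double mapping cylinder of the fibrations $K^\pm/H \to \B H \to \B K^\pm$, which is the asserted statement. The main obstacle, and the step requiring genuine care rather than formal colimit bookkeeping, is justifying that $(-)_G$ genuinely commutes with the pushout gluing along $G/H$ up to homeomorphism (or at least up to weak equivalence compatible with the cylinder structure). I would handle this by noting that $X$ is glued along the closed $G$-cofibration $G/H \hookrightarrow G\times_{K^\pm}C(K^\pm/H)$, so $\E G \times_G (-)$, being a pushout-along-cofibration-preserving construction, carries the $G$-pushout to an ordinary pushout of the Borel constructions; the two cone pieces contribute the two mapping-cylinder halves and $(G/H)_G \simeq \B H$ supplies the common gluing locus, yielding precisely the double mapping cylinder of $K^\pm/H \to \B H \to \B K^\pm$.
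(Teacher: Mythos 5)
Your overall strategy coincides with the paper's: decompose $X$ as in \eqref{Eq:decomposition_2}, apply the Borel construction piecewise (which commutes with the gluing along $G/H$, as you note), and identify the resulting pieces. The pushout step and the identification $(G/L)_G\cong \E G/L=\B L$ are fine. However, there is a genuine gap at the decisive step: the proposition asserts a \emph{homeomorphism}, and your treatment of the cone pieces only produces homotopy-theoretic information. Knowing that $\bigl(G\times_{K^\pm}C(K^\pm/H)\bigr)_G$ deformation retracts onto $\B K^\pm$ and that $K^\pm/H$ is the (homotopy) fibre of $\B H\to \B K^\pm$ does not identify this piece \emph{as a space} with the mapping cylinder of $\B H\to\B K^\pm$; at best, after invoking the gluing lemma for pushouts along cofibrations (including the compatibility of your retractions with the inclusions of $\B H$, which you assert but do not verify), it yields a homotopy equivalence between $X_G$ and the double mapping cylinder. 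That is strictly weaker than the statement being proved.

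The missing ingredient---and exactly what the paper's proof uses---is the cancellation homeomorphism $\E G\times_G\bigl(G\times_{K^\pm}Z\bigr)\cong \E G\times_{K^\pm}Z$, valid for any $K^\pm$-space $Z$. Applying it with $Z=C(K^\pm/H)$ shows that each Borel piece is, on the nose, the fibrewise cone $\E G\times_{K^\pm}C(K^\pm/H)$ over $\B K^\pm$ of the fibre bundle $\B H=\E G\times_{K^\pm}(K^\pm/H)\to \E G/K^\pm=\B K^\pm$; unwinding the cone coordinate exhibits this fibrewise cone literally (homeomorphically) as the mapping cylinder of the projection $\B H\to\B K^\pm$, with fibre $K^\pm/H$. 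Gluing the two mapping cylinders along the common copy of $\B H$ then gives the double mapping cylinder of $K^\pm/H\to\B H\to\B K^\pm$ up to homeomorphism, not merely up to homotopy. With this substitution your argument becomes the paper's proof; without it, what you establish would suffice for the cohomological applications drawn from the proposition later in the paper, but not for the proposition as stated.
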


\begin{proof}
By definition,  $X$ is the union of two cone bundles over the two singular orbits, that is,
\begin{align*}
X=G\times_{K^-} C(\frac{K^-}{H})\bigcup_{G/H}G\times_{K^+} C(\frac{K^+}{H}).
\end{align*}
Hence, $X_G$   is the union of two fiber bundles over  $\B K^{\pm}$ whose fibers are cones over $K^{\pm}/H$. That is,
\begin{align*}
X_G\cong \E G\times_{K^-} C(\frac{K^-}{H})\bigcup_{\B H} \E G\times_{K^+} C(\frac{K^+}{H}).
\end{align*}
Therefore, $X_G$ is the double mapping cylinder of the fibrations
\[
K^{\pm}/H\to \B H\to \B K^{\pm}.
\]
\end{proof}

\begin{prop}\label{P:Equivariant_Cohomology}
Let $X$ be a generalized cohomogeneity one space with  a group diagram $(G, H, K^{-}, K^{+})$,
 where the classifying spaces $\B H$, $\B K^-$ and $\B K^+$ are Sullivan spaces.
Then the equivariant cohomology of $X$ is isomorphic to the cohomology  of the following model:
\begin{equation} \label{Eq:Double_mapping_Algebra}
\tilde{D}=D\big((H^{*}(\B K^-_{0}, \QQ), 0)\xrightarrow[]{H^*(\B\iota^-)} (H^{*}(\B H_{0}, \QQ), 0)\xleftarrow[]{H^*(\B\iota^+)}(H^{*}(\B K^+_{0}, \QQ), 0)\big),
\end{equation}
where $\iota^{\pm}$ are the inclusions
$\iota ^{\pm}\colon H_{0}\to K^{\pm}_{0}$.

\end{prop}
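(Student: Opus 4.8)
The plan is to combine Proposition~\ref{P:X_G as a double mapping cylinder}, which identifies $X_G$ as the double mapping cylinder of the two Borel fibrations $K^\pm/H\to \B H\to \B K^\pm$, with the algebraic model $D(-)$ for a double mapping cylinder from \eqref{Eq:Grove_Halperin_Model}. Since $X_G$ is a double mapping cylinder, it is in particular an adjunction space, so the fiber product model of Proposition~\ref{P:fiber_product_adjunction_space} (and its Sullivan-representative refinement Proposition~\ref{P:Fiber_Products_Sullivan_Spaces}) applies once the relevant relative homology condition is checked and once we arrange one of the gluing maps to be surjective via the surjective trick of Remark~\ref{R:Surjective_Trick}. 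The upshot is that the model $D\big(A_{PL}(\B K^-)\to A_{PL}(\B H)\leftarrow A_{PL}(\B K^+)\big)$ computes $H^*(X_G,\QQ)$, so it remains to replace each $A_{PL}(\B K^\pm)$, $A_{PL}(\B H)$ by the smaller, formal models $(H^*(\B K^\pm_0,\QQ),0)$, $(H^*(\B H_0,\QQ),0)$ compatibly, and to identify the connecting maps with $H^*(\B\iota^\pm)$.

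The key steps, in order, are as follows. First I would invoke Proposition~\ref{P:X_G as a double mapping cylinder} to write $X_G$ as the double mapping cylinder of $\B H\to \B K^\pm$ with fiber $K^\pm/H$. Second, I would apply the Grove--Halperin construction $D(-)$ together with Propositions~\ref{P:fiber_product_adjunction_space} and \ref{P:Fiber_Products_Sullivan_Spaces}, so that $D\big(A_{PL}(\B K^-)\xrightarrow{A_{PL}(\B\iota^-)}A_{PL}(\B H)\xleftarrow{A_{PL}(\B\iota^+)}A_{PL}(\B K^+)\big)$ is a commutative model for $X_G$, hence computes $H^*(X_G,\QQ)=H^*_G(X,\QQ)$. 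Third, I would use the Sullivan-space hypothesis on $\B H$, $\B K^\pm$: by Corollary~\ref{C_Sullivan_Model_of_BH}, for each of these groups the map $A_{PL}(\B\iota\colon \B G_0\to \B G)$ induced by inclusion of the identity component is a quasi-isomorphism, and the minimal model of $\B G$ is isomorphic to $(H^*(\B G_0,\QQ),0)$ (the latter being polynomial, hence formal). Thus each corner of the diagram admits a quasi-isomorphism from $(H^*(\B G_0,\QQ),0)$, and the induced corner maps are $H^*(\B\iota^\pm)$. Finally I would invoke Lemma~\ref{L:Equivalence_of_two_fiber_product}, which guarantees that when the three vertical comparison maps are quasi-isomorphisms and the requisite surjectivity holds on both rows, the induced map between the two fiber products (equivalently the two $D$-models) is itself a quasi-isomorphism. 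This shows $\tilde D$ as in \eqref{Eq:Double_mapping_Algebra} has the same cohomology as the $A_{PL}$-model, completing the proof.

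I expect the main obstacle to be the careful bookkeeping of the \emph{surjectivity} and \emph{compatibility} requirements needed to apply Proposition~\ref{P:Fiber_Products_Sullivan_Spaces} and Lemma~\ref{L:Equivalence_of_two_fiber_product}. The issue is that $D(-)$ builds in a contractible algebra $C$ precisely to replace $\phi$ by a surjection $\Phi$, so I would need to verify that the comparison between the large $A_{PL}$-diagram and the small $(H^*(\B\cdot_0,\QQ),0)$-diagram is genuinely a commutative diagram of cochain algebras whose vertical arrows are the quasi-isomorphisms supplied by Corollary~\ref{C_Sullivan_Model_of_BH}, and that one gluing map in each row can be made surjective simultaneously with these identifications. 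A secondary point requiring care is the hypothesis of Proposition~\ref{P:fiber_product_adjunction_space}, namely the relative homology isomorphism $H_*(Z,Y)\cong H_*(W\cup_f Z,W)$ for each half of the double mapping cylinder; this follows from excision for the cone bundles, but should be stated. Once these formalities are in place, the passage from $A_{PL}$-models to the formal polynomial corner models is routine, and the isomorphism $H^*(\tilde D)\cong H^*_G(X,\QQ)$ follows.
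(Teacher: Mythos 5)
Your proposal is correct and follows essentially the same route as the paper's proof: decompose $X_G$ via Proposition~\ref{P:X_G as a double mapping cylinder}, verify the hypothesis of Proposition~\ref{P:fiber_product_adjunction_space} by excision, use the surjective trick together with Lemma~\ref{L:Equivalence_of_two_fiber_product} to pass from the $A_{PL}$-level fiber product to the identity-component versions supplied by Corollary~\ref{C_Sullivan_Model_of_BH}, and conclude with Proposition~\ref{P:Fiber_Products_Sullivan_Spaces} applied to the formal corner models $(H^*(\B H_0,\QQ),0)$ and $(H^*(\B K^\pm_0,\QQ),0)$. The bookkeeping issues you flag (compatibility of the surjectivization with the comparison quasi-isomorphisms, and the excision step) are exactly the points the paper handles, in the same order.
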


\begin{proof}
By Proposition~\ref{P:X_G as a double mapping cylinder},

\begin{align*}
X_G\cong \E G\times_{K^-} C(K^-/H)\bigcup_{\B H} \E G\times_{K^+} C(K^+/H).
\end{align*}

By excision we have
$$H_{*}\big(\E G\times_{K^-} C(K^-/H), \B H;\QQ\big)\cong H_{*} \big(X_{G}, \E G\times_{K^+} C(K^+/H); \QQ\big).$$
Therefore, by Proposition~\ref{P:fiber_product_adjunction_space}, the fiber product \[A_{PL}\big(\E G\times_{K^-} C(K^-/H)\big)\times_{A_{PL}(\B H)}A_{PL}\big(\E G\times_{K^+} C(K^+/H)\big)\]
is a commutative model for $A_{PL}(X_{G})$. Further, $G/K^{\pm}$
is a deformation retract of \linebreak $\E G\times_{K^{\pm}} C(K^{\pm}/H)$. If we make the morphism $A_{PL}(\B\iota^{-} )\colon A_{PL}(\B K^{-})\to A_{PL}(\B H)$ surjective (see Remark \ref{R:Surjective_Trick}), then by Lemma~\ref{L:Equivalence_of_two_fiber_product}, the fiber product, $A_{PL}(\B K^{-})\times_{A_{PL}(\B H)}A_{PL}(\B K^{+})$ is quasi-isomorphic to $A_{PL}(X_{G})$. On the other hand, by Corollary~\ref{C_Sullivan_Model_of_BH},  the morphisms $A_{PL}(\B i^{\pm} )\colon A_{PL}(\B K^{\pm})\to A_{PL}(\B K^{\pm}_{0})$ and $A_{PL}(\B i) \colon A_{PL}(\B H)\to A_{PL}(\B H_{0})$ are quasi-isomorphisms. Again by Lemma~\ref{L:Equivalence_of_two_fiber_product}, and after making the morphism $A_{PL}(\B i^{-} )$ surjective, we conclude that  $A_{PL}(\B K^{-}_{0})\times_{A_{PL}(\B H_{0})}A_{PL}(\B K^{+}_{0})$ is quasi-isomorphic to $A_{PL}(X_{G})$.
Now consider the Sullivan models for $\B H_{0}$ and $\B K^{\pm}_{0}$,
which are $(H^{*}(\B H_{0},\QQ), 0)$,  $(H^{*}(\B K^{\pm}_{0},\QQ), 0)$, respectively. Then we use Proposition~\ref{P:Fiber_Products_Sullivan_Spaces} to conclude the result.
\end{proof}

\begin{rem}
From now on, we use $\tilde{D}$ to refer to the commutative cochain algebra in \eqref{Eq:Double_mapping_Algebra}, and we use $H(\tilde{D})$ to refer to the rational cohomology of $\tilde{D}$, i.e., the equivariant cohomology of the cohomogeneity one space.
\end{rem}

\section{Cohen--Macaulay modules }\label{S:CM}
In this section we collect some basic information about Cohen--Macaulay modules in general and Cohen--Macaulay actions in particular. We refer the reader to \cite{BrHe, GoTo, GoRo, GoMa14, GoMa18} for more detailed information.

Let  $A$ be a module over a  local Noetherian  ring $R$ with the maximal ideal $\mathfrak{m}$. Define
$$\Kdim A:= \Kdim \frac{R}{\Ann(A)},$$
and
$$\depth A:= \grad(\mathfrak{m}, A),$$
where $\Kdim$ stands for the \emph{Krull dimension}, $\Ann(A)=\{r\in R\mid ra=0\,\, \text{for all} \, \, a\in A\}$ and $\grad(\mathfrak{m}, A)$ is the length of a maximal \emph{$A$-regular sequence} in $\mathfrak{m}$ \cite[Chapter~1]{BrHe}.
Note that we always have $\depth A \leq\Kdim A$ \cite[Proposition~1.2.12]{BrHe}.

\begin{definition}\cite[Definition~2.1.1]{BrHe}
 Let $R$ be a Noetherian local ring.  A finite $R$-module $M\neq 0$
is a  \emph{Cohen--Macaulay} module if $\Kdim A=\depth A$. If $R$ itself is a Cohen--Macaulay module, then it is called a Cohen--Macaulay ring.
\end{definition}

In general, if $R$ is an arbitrary Noetherian ring, then $M$ is a Cohen--Macaulay module if the localization $M_{\mathfrak{m}}$ is a Cohen--Macaulay module over $R_{\mathfrak{m}}$ for all maximal ideals $\mathfrak{m}\in \Supp M$.

In the context of graded modules over graded rings, there is a graded analogue of local rings:

\begin{definition}\cite[Definition~1.5.13]{BrHe}
 Let $R$ be a graded ring. A graded ideal $\mathfrak{m}$ of $R$ is called $\prescript{*}{}{\text{maximal}}$  if every graded ideal that properly contains $\mathfrak{m}$ equals $R$. The ring $R$ is called $\prescript{*}{}{\text{local}}$ if it has a unique $\prescript{*}{}{\text{maximal}}$ ideal $\mathfrak{m}$.  A $\prescript{*}{}{\text{local}}$ ring with $\prescript{*}{}{\text{maximal}}$ ideal $\mathfrak{m}$ will be denoted by $(R, \mathfrak{m})$.
\end{definition}

\begin{example}\cite[Example~1.5.14, (b)]{BrHe}
 Let $R$ be a non--negatively  graded ring for which the subring of zero--degree elements  $R_0$ is a local ring with maximal ideal  $\mathfrak{m_0}$. Then $R$ is a $\prescript{*}{}{\text{local}}$ ring with $\prescript{*}{}{\text{local}}$ ideal $\mathfrak{m}=\mathfrak{m_0}\oplus(\oplus_{n>0}R_n)$.  In particular,  a non--negatively  graded algebra over  a field is $\prescript{*}{}{\text{local}}$.
\end{example}

Analogously, one can define the Krull dimension and the depth of a graded module $A$ over a $\prescript{*}{}{\text{local}}$ ring $(R, \mathfrak{m})$ as for a module over a local ring. That is, the Krull dimension of $A$ is the Krull dimension of the ring $\frac{R}{\Ann(A)}$, which is  the supremum of the \emph{heights} of prime ideals in $R$ containing $\Ann(A)$ and
$$\depth A:= \grad(\mathfrak{m}, A).$$
Let us point out that in the definition of Krull dimension the supremum is taken over all prime ideals containing $\Ann(A)$ not just the graded ones.

\begin{definition}\label{D:CM_Star_Local}
 A finitely generated graded module $A$ over a Noetherian graded $\prescript{*}{}{\text{local}}$ ring $R$ is Cohen--Macaulay if  $\Kdim A=\depth A$.
\end{definition}

Now a question that may arise is whether one can use either definition of Cohen--Macaulay module, i.e. one for a general Noetherian ring and one for the  $\prescript{*}{}{\text{local}}$ ring $(R, \mathfrak{m})$, and still get the same result. If $R/\mathfrak{m}$ is a field, it is indeed the case as \cite[Proposition~5.1]{GoTo} shows:

\begin{prop}\cite[Proposition~5.1]{GoTo}\label{No_matter_which_definition_of_CM_we_use}
Let $A$ be a finitely generated graded module over a Noetherian graded $\prescript{*}{}{\text{local}}$ ring $R$ with $\prescript{*}{}{\text{maximal}}$  ideal $\mathfrak{m}$ such that $R/\mathfrak{m}$ is a field. Then the following conditions are equivalent.
\begin{itemize}
\item[(i)] $A$ is Cohen--Macaulay over $R$ as in Definition~\ref{D:CM_Star_Local}.
\item[(ii)] $A_{\mathfrak{m}}$ is Cohen--Macaulay over the local ring $R_{\mathfrak{m}}$.
\item[(iii)]  $A_{\mathfrak{m}'}$ is Cohen--Macaulay over the local ring $R_{\mathfrak{m}'}$ for all (not necessarily graded) maximal ideals $\mathfrak{m}'\subset R$.
\end{itemize}
 If these conditions are satisfied, then the Krull dimensions of the $R$-module $A$ and the $R_{\mathfrak{m}}$-module $A_{\mathfrak{m}}$ coincide.
\end{prop}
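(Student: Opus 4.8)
The plan is to reduce the whole equivalence to two statements about how Krull dimension and depth transform under the localization $R\to R_{\mathfrak{m}}$ at the $\prescript{*}{}{\text{maximal}}$ ideal, and then to propagate the Cohen--Macaulay property to the remaining maximal ideals using the graded structure of $R$. The final assertion about coinciding Krull dimensions will fall out of the first of these two statements.

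First I would establish the core equivalence (i)~$\Leftrightarrow$~(ii) by showing that both invariants are unchanged upon localizing at $\mathfrak{m}$, that is,
\[
\Kdim_R A = \Kdim_{R_{\mathfrak{m}}} A_{\mathfrak{m}}, \qquad \depth_R A = \depth_{R_{\mathfrak{m}}} A_{\mathfrak{m}}.
\]
For the dimension, note that $\Ann(A)$ is a graded ideal (as $A$ is graded) and is proper (as $A\neq 0$), hence is contained in the unique $\prescript{*}{}{\text{maximal}}$ ideal $\mathfrak{m}$; since in a graded $\prescript{*}{}{\text{local}}$ ring the dimension of $R/\Ann(A)$ is attained along a chain of primes terminating at the graded prime $\mathfrak{m}$, one obtains $\Kdim_R A=\height(\mathfrak{m}/\Ann(A))=\Kdim_{R_{\mathfrak{m}}} A_{\mathfrak{m}}$. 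For the depth I would use that over a graded $\prescript{*}{}{\text{local}}$ ring a maximal $A$-regular sequence contained in $\mathfrak{m}$ may be taken homogeneous; homogeneous nonzerodivisors on $A$ stay nonzerodivisors on $A_{\mathfrak{m}}$ and land in $\mathfrak{m}R_{\mathfrak{m}}$, while the hypothesis that $R/\mathfrak{m}$ is a field prevents the regular sequence from lengthening after localization (most transparently via the $\Ext$-characterization of depth, since $\Ext^i_R(R/\mathfrak{m},A)$ is already supported at $\mathfrak{m}$). Granting these two identities, $A$ is Cohen--Macaulay over $R$ in the sense of Definition~\ref{D:CM_Star_Local} exactly when $\Kdim_{R_{\mathfrak{m}}}A_{\mathfrak{m}}=\depth_{R_{\mathfrak{m}}}A_{\mathfrak{m}}$, which is (ii); the first identity is also the stated coincidence of Krull dimensions.

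The implication (iii)~$\Rightarrow$~(ii) is then immediate: since $R/\mathfrak{m}$ is a field, $\mathfrak{m}$ is itself an (ungraded) maximal ideal, so (iii) applied to $\mathfrak{m}'=\mathfrak{m}$ is precisely (ii). The remaining and most delicate step is (ii)~$\Rightarrow$~(iii): from $A_{\mathfrak{m}}$ Cohen--Macaulay I must deduce that $A_{\mathfrak{m}'}$ is Cohen--Macaulay for every maximal ideal $\mathfrak{m}'\in\Supp A$. Here I would invoke the Goto--Watanabe analysis of primes in a graded ring: for any prime $\mathfrak{m}'$ the ideal $(\mathfrak{m}')^{*}$ generated by the homogeneous elements of $\mathfrak{m}'$ is again prime and, being graded, is contained in $\mathfrak{m}$. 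Since $A_{(\mathfrak{m}')^{*}}$ is a localization of $A_{\mathfrak{m}}$ and localizations of Cohen--Macaulay modules are Cohen--Macaulay \cite{BrHe}, the module $A_{(\mathfrak{m}')^{*}}$ is Cohen--Macaulay; and because $\mathfrak{m}'/(\mathfrak{m}')^{*}$ has height at most one, with $R_{\mathfrak{m}'}/(\mathfrak{m}')^{*}R_{\mathfrak{m}'}$ a localized one-variable polynomial ring, the passage from $(\mathfrak{m}')^{*}$ to $\mathfrak{m}'$ only adjoins a single regular parameter, whence $A_{\mathfrak{m}'}$ is Cohen--Macaulay as well.

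The main obstacle I anticipate is the graded-to-ungraded bookkeeping in this last step: one must transport the equalities $\Kdim=\depth$ through the comparison of $\mathfrak{m}'$ with its graded core $(\mathfrak{m}')^{*}$ and verify that the single extra parameter genuinely preserves the Cohen--Macaulay equality rather than merely the individual invariants. Everything else is standard graded commutative algebra once the homogeneous-regular-sequence theorem and the graded-dimension results from \cite{BrHe} are in hand, so the effort lies in assembling these ingredients carefully rather than in any new idea.
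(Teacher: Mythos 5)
First, a point of order: the paper does not prove this proposition at all --- it is imported verbatim, with citation, from \cite[Proposition~5.1]{GoTo} --- so there is no in-paper argument to compare yours against; I can only assess your proposal on its own terms and against the standard argument in graded commutative algebra, which it essentially reproduces. Your architecture is the right one: (i)$\Leftrightarrow$(ii) by showing $\Kdim$ and $\depth$ are unchanged under localization at $\mathfrak{m}$, (iii)$\Rightarrow$(ii) because the hypothesis that $R/\mathfrak{m}$ is a field makes $\mathfrak{m}$ an honest ungraded maximal ideal lying in $\Supp A$, and (ii)$\Rightarrow$(iii) by comparing an arbitrary maximal ideal $\mathfrak{m}'$ with its graded core $(\mathfrak{m}')^{*}$. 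Your $\operatorname{Ext}$-argument for the depth equality is complete as stated, since each $\operatorname{Ext}^i_R(R/\mathfrak{m},A)$ is an $R/\mathfrak{m}$-vector space and hence unchanged by localization at $\mathfrak{m}$. For the dimension equality, the point your sketch glosses over is exactly where the field hypothesis enters: for a maximal ideal $\mathfrak{n}\neq\mathfrak{m}$ one has $\mathfrak{n}^{*}\subsetneq\mathfrak{n}$, and $\mathfrak{n}^{*}=\mathfrak{m}$ is impossible (it would force $\mathfrak{m}\subsetneq\mathfrak{n}$, contradicting maximality of $\mathfrak{m}$), so $\height(\mathfrak{n}/\Ann A)\leq\height(\mathfrak{n}^{*}/\Ann A)+1\leq\height(\mathfrak{m}/\Ann A)$ by \cite[Theorem~1.5.8]{BrHe}. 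The hypothesis is not cosmetic: for $R=\QQ[t,t^{-1}]$ with $\mathfrak{m}=(0)$, conditions (ii) and (iii) hold for $A=R$ while (i) fails, so any correct proof must use it precisely here.

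The step you flag as your main obstacle --- ascent of the Cohen--Macaulay property from $A_{(\mathfrak{m}')^{*}}$ to $A_{\mathfrak{m}'}$ --- is a genuine theorem, but a known and citable one (\cite[Exercise~2.1.27]{BrHe}, going back to Goto--Watanabe), and your ``single regular parameter'' heuristic is exactly how it is proved: one checks that \emph{both} invariants jump by precisely one, not merely that a parameter exists. The dimension jump is again \cite[Theorem~1.5.8]{BrHe} applied to $R/\Ann A$. For the depth jump, pass to the homogeneous localization at $(\mathfrak{m}')^{*}$, take a maximal homogeneous regular sequence $x_1,\dots,x_r$ on $A$ there, and set $A'=A/(x_1,\dots,x_r)A$. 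Associated primes of finitely generated graded modules are graded, and all graded primes lie in the graded core; hence every zerodivisor on $A'_{\mathfrak{m}'}$ lies in $(\mathfrak{m}')^{*}R_{\mathfrak{m}'}$, so any $f\in\mathfrak{m}'$ whose class generates the one-dimensional local ring $R_{\mathfrak{m}'}/(\mathfrak{m}')^{*}R_{\mathfrak{m}'}$ is $A'_{\mathfrak{m}'}$-regular, while maximality of the sequence forces $(\mathfrak{m}')^{*}$ to be associated to $A'$, which caps $\depth A'_{\mathfrak{m}'}$ at $1$. Together this gives $\depth A_{\mathfrak{m}'}=\depth A_{(\mathfrak{m}')^{*}}+1$ and $\Kdim A_{\mathfrak{m}'}=\Kdim A_{(\mathfrak{m}')^{*}}+1$, closing (ii)$\Rightarrow$(iii). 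In short: your proposal has no fatal gap, but its last step should either be cited as \cite[Exercise~2.1.27]{BrHe} or written out along these lines rather than asserted.
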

Let $G$ be a compact connected Lie group which acts continuously on a topological space $X$ with the associated bundle  map $\pi \colon X_{G}\to \B G$. Then the homomorphism
$$
H(A_{PL}(\pi))\colon H^{*}(\B G,  \QQ)\to H^{*}(X_{G}, \QQ)
$$
induces an $H^{*}(\B G, \QQ)$-module structure on $H^{*}_{G}(X):=H^{*}(X_{G}, \QQ)$ (see for example \cite[Chapter~III]{Hsiang}).

\begin{definition}
We call the $G$-space $X$ \emph{Cohen--Macaulay} if its equivariant cohomology \linebreak[4]$H_G^*(X;\QQ)$ is an $H^{*}(\B G, \QQ)$-Cohen--Macaulay module.
\end{definition}

\noindent\textbf{Examples and Non-examples.}
\begin{itemize}
\item[(i)] Cohomogeneity one (smooth or topological) manifolds  are Cohen--Macaulay spaces \cite{GoMa14,GoMa18}.
\item[(ii)] \label{PageEx}Let $W^7_{1, 1}=\frac{\SU(3)}{S^1}$, where $S^1=\diag\{e^{i\theta}, e^{i\theta}, e^{-2i\theta}\}\subseteq \SU(3)$, be an Aloff--Wallach space. It is a positively curved homogeneous space. Hence, by Proposition~\ref{P:Suspension-action}, $\Susp(W^7_{1,1})$ is a cohomogeneity one Alexandrov space with  group diagram $(\SU(3), S^1, \SU(3), \SU(3))$. Note that $\Susp(W^7_{1,1})$ is not a cohomogeneity one (topological) manifold (see \cite{GGZ1} for more details). We claim that the equivariant cohomology of $\Susp(W^7_{1,1})$ as an $H^*(\B G)$-module is not Cohen--Macaulay. Indeed, by Lemma~\ref{L: Dim_of_Ring_Instead_of_Module}, we only need to show that the Krull dimension of the ring $H(\tilde{D})$ is not equal to its depth.  We have

\begin{align*}
\tilde{D}&=D(H^*(\B \SU(3), \QQ) \xrightarrow{\phi} H^* (\B S^1, \QQ) \xleftarrow{\psi} H^* (\B \SU(3), \QQ))\\
&= D(\QQ[x_1, y_1] \xrightarrow{\phi} \QQ[t] \xleftarrow{\psi} \QQ[x_2, y_2]),
\end{align*}
where $\deg~x_i=4$, $\deg~y_i=6$, $i=1, 2$, and $\deg~t=2$.
The morphisms $\Phi\colon\QQ[x_1, y_1]\otimes (\Lambda\langle c, \partial c\rangle, \partial)\to  \QQ[t]$ and $\psi\colon \QQ[x_2, y_2]\to  \QQ[t]$, where $(\Lambda\langle c, \partial c\rangle, \partial)$  is a contractible algebra as in Theorem~\ref{T:Surjectivization}, are defined as follows

\begin{align*}
\QQ[x_1, y_1]\otimes (\Lambda\langle c, \partial c\rangle, \partial)&\longrightarrow  \QQ[t] \longleftarrow \QQ[x_2, y_2]\\
x_1&\longmapsto -3t^2\longmapsfrom x_2\\
y_1&\longmapsto -2t^3\longmapsfrom y_2\\
c&\longmapsto t.
\end{align*}

One may see that for every element $[\xi, \eta]\in H^{\geq 1}(\tilde{D})$, we have that  $[\partial c, 0]\cdot [\xi, \eta]=0$. Thus $\depth H(\tilde{D})=0$. However,  $P=\{[\xi, 0]\mid \xi\in \QQ[x_1, y_1]\otimes \Lambda\langle c, \partial c\rangle\}$ is a prime ideal which is strictly contained in $\mathfrak{m}= H^{\geq 1}(\tilde{D})$. Therefore,   $\Kdim H(\tilde{D})\geq 1$.
\end{itemize}

\begin{rem}
Note that the argument above shows that unlike cohomogeneity one (smooth or topological) manifolds, cohomogeneity one Alexandrov spaces are not necessarily Cohen--Macaulay.
\end{rem}

We recall the following Lemma from \cite{GoMa14}:

\begin{lem}\cite[Lemma~2.5]{GoMa14}\label{L: Changing_the_rings}
Let $R$ and $S$ be two Noetherian graded $\prescript{*}{}{\text{local}}$ rings and let $\phi \colon R\to S$ be a homomorphism that makes $S$ into an $R$-module which is finitely generated. If $A$ is a finitely generated $S$-module, then we have

\[
\operatorname{depth}_R A = \operatorname{depth}_S A\quad \text{and} \quad\dim_R A = \dim_S A.
\]
In particular, $A$ is Cohen--Macaulay as an $R$-module if and only if it is Cohen--Macaulay as an $S$-module.
\end{lem}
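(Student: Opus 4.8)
The plan is to prove the two numerical equalities separately; the Cohen--Macaulay equivalence then follows immediately from Definition \ref{D:CM_Star_Local}, since both $\Kdim$ and $\depth$ are being shown to be intrinsic to the module $A$ and independent of whether we regard it over $R$ or over $S$. Throughout I use that the $R$-module structure on $A$ is the one obtained from the $S$-structure via $\phi$, so that an element $r\in R$ acts exactly as $\phi(r)\in S$; in particular $A$ is finitely generated over $R$ as well, and $\mathfrak{m}_R=R_+$, $\mathfrak{m}_S=S_+$ are the $\prescript{*}{}{\text{maximal}}$ ideals (with $\phi(R_+)\subseteq S_+$ since $\phi$ preserves degree).

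For the dimension I would first note that $\Ann_R A=\phi^{-1}(\Ann_S A)$, because $rA=0$ iff $\phi(r)A=0$. Hence the induced map $R/\Ann_R A\to S/\Ann_S A$ is injective, and being a quotient of the module-finite homomorphism $\phi$ it is itself module-finite, so it is an integral ring extension. Integral extensions preserve Krull dimension (Lying-Over, Going-Up and Incomparability, cf.\ \cite{BrHe}), and since here $\Kdim$ is taken over \emph{all} primes containing the annihilator there is no graded subtlety. Thus $\Kdim(R/\Ann_R A)=\Kdim(S/\Ann_S A)$, i.e.\ $\dim_R A=\dim_S A$.

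For the depth I would use the local-cohomology description of grade, $\grad(I,A)=\min\{i:H^i_I(A)\neq 0\}$. Since the $R$-action on $A$ factors through $\phi$, the independence-of-base theorem for local cohomology gives $H^i_{\mathfrak{m}_R}(A)\cong H^i_{\mathfrak{m}_R S}(A)$ for all $i$, whence $\depth_R A=\grad(\mathfrak{m}_R,A)=\grad(\mathfrak{m}_R S,A)$ computed over $S$. As grade depends only on the radical of the defining ideal, it then remains to identify $\grad(\mathfrak{m}_R S,A)$ with $\depth_S A=\grad(\mathfrak{m}_S,A)$, for which it suffices to establish the radical identity $\sqrt{\mathfrak{m}_R S}=\mathfrak{m}_S$.

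This radical identity is the step I expect to be the main obstacle, and it is exactly where finiteness is decisive. On the one hand $\mathfrak{m}_R S=R_+S\subseteq S_+=\mathfrak{m}_S$, and $S_+$ is radical since $S/S_+=S_0$ is a field (in our applications $R_0=S_0=\QQ$), so $\sqrt{\mathfrak{m}_R S}\subseteq\mathfrak{m}_S$. On the other hand $S/\mathfrak{m}_R S$ is a finitely generated module over the field $R/\mathfrak{m}_R=R_0$, hence a finite-dimensional graded $R_0$-algebra, so its augmentation ideal $\mathfrak{m}_S/\mathfrak{m}_R S$ is nilpotent; therefore every element of $\mathfrak{m}_S$ has a power in $\mathfrak{m}_R S$, giving $\mathfrak{m}_S\subseteq\sqrt{\mathfrak{m}_R S}$. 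Combining the two inclusions yields $\sqrt{\mathfrak{m}_R S}=\mathfrak{m}_S$ and hence $\depth_R A=\depth_S A$. For the fully general $\prescript{*}{}{\text{local}}$ statement one may instead localize at the $\prescript{*}{}{\text{maximal}}$ ideals and invoke Proposition \ref{No_matter_which_definition_of_CM_we_use} to reduce both equalities to the classical case of a finite local homomorphism of Noetherian local rings, where the result is standard \cite{BrHe}.
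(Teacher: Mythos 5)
Your proposal is essentially correct, but note first that the paper itself contains no proof of this lemma: it is quoted verbatim from \cite[Lemma~2.5]{GoMa14}, so the comparison is with the proof given there. That proof (which your closing sentence essentially paraphrases) reduces to the classical situation of a finite homomorphism of Noetherian \emph{local} rings---localizing at the $\prescript{*}{}{\text{maximal}}$ ideals in the spirit of \cite[Proposition~5.1]{GoTo}, i.e.\ Proposition~\ref{No_matter_which_definition_of_CM_we_use}---where the depth equality is standard (\cite[Exercise~1.2.26]{BrHe}) and the dimension equality follows from integrality. Your dimension argument is the same (the induced injection $R/\Ann_R A\hookrightarrow S/\Ann_S A$ is module-finite, hence integral, and the paper's $\Kdim$ ranges over all primes, so no graded care is needed). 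Your depth argument, however, takes a genuinely different route: you stay in the graded category and compute $\grad(\mathfrak{m}_R,A)$ via local cohomology, combining the independence theorem $H^i_{\mathfrak{m}_R}(A)\cong H^i_{\mathfrak{m}_R S}(A)$ with the radical identity $\sqrt{\mathfrak{m}_R S}=\mathfrak{m}_S$, which you correctly extract from module-finiteness ($S/\mathfrak{m}_R S$ is a finite-dimensional non-negatively graded algebra, so its augmentation ideal is nilpotent). This buys a self-contained proof that never leaves the graded world and makes the role of finiteness completely transparent. Two caveats: (i) your identifications $\mathfrak{m}_R=R_+$, $\mathfrak{m}_S=S_+$ with residue field $\QQ$, and the inclusion $\phi(R_+)\subseteq S_+$, implicitly assume non-negatively graded algebras over a field and a degree-preserving $\phi$; this is exactly the situation in which the lemma is applied in the paper ($R=H^*(\B G,\QQ)$, $S=H(\tilde{D})$ in Lemma~\ref{L: Dim_of_Ring_Instead_of_Module}), but it is narrower than the statement's literal $\prescript{*}{}{\text{local}}$ generality; (ii) your fallback for that full generality is not complete as written, since Proposition~\ref{No_matter_which_definition_of_CM_we_use} itself assumes $R/\mathfrak{m}$ is a field and, moreover, yields the Cohen--Macaulay equivalence and dimension statement rather than the depth equality, which would still need \cite[Proposition~1.5.15]{BrHe}-type facts. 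Neither caveat affects the way the lemma is used in this paper.
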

\begin{lem}\label{L: Dim_of_Ring_Instead_of_Module}
The dimension of the $G$-equivariant cohomology $H^*_G(X)$ as an $H^*(\B G, \QQ)$-module is given by the Krull dimension of $H\big(D\big(H^*(\B K^{-}, \QQ) \to H^* (\B H, \QQ) \leftarrow H^* (\B K^{+}, \QQ)\big)\big)$, and its depth is the depth of the latter ring, i.e.~
\begin{align}\label{Eq:Dim_Depth_1}
\dim_{H^{*}(\B G, \QQ)} H_G^*(X)&=\dim H(\tilde D),
\end{align}

\begin{align}\label{Eq:Dim_Depth_2}
\operatorname{depth}_{H^*(\B G, \QQ)} H_G^*(X)&=\operatorname{depth} H(\tilde D).
\end{align}
\end{lem}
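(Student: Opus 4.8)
The plan is to establish the two equalities \eqref{Eq:Dim_Depth_1} and \eqref{Eq:Dim_Depth_2} by reducing the module-theoretic statement over $H^*(\B G,\QQ)$ to a ring-theoretic statement about $H(\tilde D)$, passing through a chain of ring homomorphisms all of which are finite. First I would recall from Proposition~\ref{P:Equivariant_Cohomology} that $H^*_G(X)\cong H(\tilde D)$ as a ring, where the $H^*(\B G,\QQ)$-module structure arises from the bundle map $\pi\colon X_G\to \B G$. The key observation is that this module structure is obtained by restriction of scalars along a composite of natural ring maps: the map $H^*(\B G,\QQ)\to H(\tilde D)$ factors through the augmentation-type maps to $H^*(\B K^\pm,\QQ)$ and $H^*(\B H,\QQ)$, and ultimately it is the canonical map $H^*(\B G,\QQ)\to H(\tilde D)$ making $H(\tilde D)$ into an $H^*(\B G,\QQ)$-algebra. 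Thus $\dim$ and $\depth$ of $H^*_G(X)$ as an $H^*(\B G,\QQ)$-module coincide with those of $H(\tilde D)$ \emph{as an $H^*(\B G,\QQ)$-module}.

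The central step is then to invoke Lemma~\ref{L: Changing_the_rings}: if $\phi\colon R\to S$ is a homomorphism of Noetherian graded $\prescript{*}{}{\text{local}}$ rings making $S$ a finitely generated $R$-module, then for any finitely generated $S$-module $A$ we have $\depth_R A=\depth_S A$ and $\dim_R A=\dim_S A$. I would apply this with $R=H^*(\B G,\QQ)$, $S=H(\tilde D)$, and $A=H(\tilde D)$ itself, viewed as a module over itself (hence $\dim_S A$ and $\depth_S A$ are precisely $\dim H(\tilde D)$ and $\depth H(\tilde D)$, the Krull dimension and depth of the ring). This immediately yields both \eqref{Eq:Dim_Depth_1} and \eqref{Eq:Dim_Depth_2} once the finiteness hypothesis is verified.

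The main obstacle, and the point requiring genuine argument, is verifying that $H(\tilde D)$ is a finitely generated module over $H^*(\B G,\QQ)$. Here I would argue as follows. Since $G$ is a compact connected Lie group, $H^*(\B G,\QQ)$ is a polynomial algebra, and each $H^*(\B K^\pm_0,\QQ)$, $H^*(\B H_0,\QQ)$ is a finitely generated module over it because the maps $H^*(\B\iota^\pm)$ come from inclusions of compact Lie groups (so the cohomology of the homogeneous space $G/K^\pm$ is finite-dimensional, which is exactly finite generation over $H^*(\B G,\QQ)$). The model $\tilde D$ is built from these algebras together with a contractible factor $(\Lambda(U\oplus dU),d)$, and its cohomology $H(\tilde D)$ is computed from a fiber product; one checks that $H(\tilde D)$ is a subquotient of a finitely generated $H^*(\B G,\QQ)$-module, and since $H^*(\B G,\QQ)$ is Noetherian, finite generation is inherited. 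I would make this precise by exhibiting $H(\tilde D)$ inside the Mayer--Vietoris-type long exact sequence associated to the double mapping cylinder and noting that each term is finitely generated over $H^*(\B G,\QQ)$.

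Finally, I would remark that the hypothesis $R/\mathfrak{m}$ being a field (here $R=H^*(\B G,\QQ)$ over $\QQ$) guarantees, via Proposition~\ref{No_matter_which_definition_of_CM_we_use}, that the $\prescript{*}{}{\text{local}}$ notion of Cohen--Macaulay used throughout agrees with the localized one, so that the equalities of $\dim$ and $\depth$ indeed transfer the Cohen--Macaulay property between $H^*_G(X)$ and $H(\tilde D)$ without ambiguity. The substantive content of the lemma is therefore concentrated entirely in the finite-generation verification; once that is in hand, the two displayed equations follow formally from Lemma~\ref{L: Changing_the_rings}.
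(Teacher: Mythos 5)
Your proposal is correct and follows essentially the same route as the paper: identify $H^*_G(X)\cong H(\tilde D)$ via Proposition~\ref{P:Equivariant_Cohomology}, transfer the $H^*(\B G,\QQ)$-module structure along the resulting ring map, and apply Lemma~\ref{L: Changing_the_rings} with $A=S=H(\tilde D)$. The only difference is that you sketch a Mayer--Vietoris argument for the finite generation of $H(\tilde D)$ over $H^*(\B G,\QQ)$ (and for the Noetherian $\prescript{*}{}{\text{local}}$ property), which the paper simply asserts with citations to Venkov and Goertsches--Hagh Shenas Noshari--Mare; your sketch is a valid way to fill in that step.
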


\begin{proof}
By Proposition~\ref{P:Equivariant_Cohomology}, there exists an isomorphism
\begin{align*}
\phi\colon H_G^*(X)\to H( \tilde{D}).
\end{align*}
Therefore, we have a homomorphism
\[
\psi=\phi H(A_{PL}(\pi))\colon H^*(\B G, \QQ)\to H(\tilde{D}).
\]
Since $H^*(\B G, \QQ)$ and $H(\tilde{D})$ are both Noetherian $\prescript{*}{}{\text{local}}$ \cite{Venkov, GoHaghMa}, and  $H(\tilde{D})$  is finitely generated $H^*(\B G, \QQ)$-module,  Lemma~\ref{L: Changing_the_rings} now yields the relations in  \eqref{Eq:Dim_Depth_1} and \eqref{Eq:Dim_Depth_2}.
\end{proof}



\section{Proof of the main theorems}\label{S:Proof}
In this section we prove our main results. We begin by recalling some algebraic notions and facts that we need later in the proofs of preliminary lemmata.

\begin{definition}\cite[Definition~5.3]{Kemper}
Let $A$ be an algebra over a field $k$. Then the \emph{transcendence
degree} of $A$ is defined as follows
\[
\trdeg(A) := \sup\{|T | \, \mid T \subseteq A \, \,\text{is finite and algebraically independen}t\}.
\]
\end{definition}

\begin{prop}\label{P:trdeg}\cite[Theorem~5.9]{Kemper}
Let $A$ be an affine
algebra over a field $k$. Then
\[
\Kdim(A) = \trdeg(A).
\]
\end{prop}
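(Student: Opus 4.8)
The plan is to deduce the equality from \emph{Noether's normalization lemma}, which is the essential input. Given the affine algebra $A=k[a_1,\dots,a_n]$, normalization produces algebraically independent elements $y_1,\dots,y_d\in A$ such that $A$ is a finitely generated module---equivalently, integral---over the polynomial subalgebra $B=k[y_1,\dots,y_d]$. The strategy is then to show that both sides of the claimed identity are governed by the single integer $d$: reading $\trdeg$ off directly and computing $\Kdim$ on the polynomial ring.

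First I would reduce to the case where $A$ is an integral domain. Since every chain of primes sits above a minimal prime, one has $\Kdim(A)=\max_{\mathfrak p}\Kdim(A/\mathfrak p)$ over the finitely many minimal primes $\mathfrak p$; and a finite subset of $A$ is algebraically independent over $k$ precisely when its image is algebraically independent in $A/\mathfrak p$ for some minimal prime $\mathfrak p$ (if it were dependent modulo every minimal prime, the product of the corresponding nonzero relations, raised to a suitable power, would vanish in $A$ because the intersection of the minimal primes is the nilradical, yielding a genuine relation). Hence $\trdeg(A)=\max_{\mathfrak p}\trdeg(A/\mathfrak p)$ as well, and it suffices to treat affine domains.

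For an affine domain I would next verify two preservation statements under the integral extension $B\subseteq A$. On the one hand, integrality together with the lying-over, going-up and incomparability theorems yields a length-preserving correspondence of prime chains, whence $\Kdim(A)=\Kdim(B)$. On the other hand, every element of $A$ is algebraic over $\operatorname{Frac}(B)=k(y_1,\dots,y_d)$, so $\operatorname{Frac}(A)$ is an algebraic extension of $\operatorname{Frac}(B)$ and therefore $\trdeg(A)=\trdeg(B)=d$. It then remains only to prove $\Kdim\big(k[y_1,\dots,y_d]\big)=d$.

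This last point is where the real work sits, and I expect it to be the main obstacle. The lower bound is immediate from the explicit chain $(0)\subsetneq(y_1)\subsetneq\dots\subsetneq(y_1,\dots,y_d)$; the upper bound I would establish by induction on $d$, simultaneously proving the full statement for affine domains. Given a maximal chain $(0)=\mathfrak p_0\subsetneq\mathfrak p_1\subsetneq\dots\subsetneq\mathfrak p_m$, the prime $\mathfrak p_1$ has height one and, the polynomial ring being a unique factorization domain, is therefore principal, $\mathfrak p_1=(f)$ with $f$ irreducible. Passing to $k[y_1,\dots,y_d]/(f)$, an affine domain whose fraction field has transcendence degree exactly $d-1$, the induction hypothesis gives its dimension as $d-1$; since the truncated chain descends there to one of length $m-1$, we obtain $m-1\le d-1$, i.e.\ $m\le d$. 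Assembling the pieces yields $\Kdim(A)=\Kdim(B)=d=\trdeg(B)=\trdeg(A)$, as desired.
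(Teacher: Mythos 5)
The paper never proves this proposition: it is quoted from Kemper's textbook (Theorem~5.9 there) and used as a black box, so there is no in-paper argument to compare yours against, and your proposal must be judged on its own merits. It is correct, and it is the classical route via Noether normalization. Interestingly, it is genuinely different from the proof in the cited source: in Kemper's book integral extensions, lying-over/going-up/incomparability and Noether normalization only appear in Chapter~8, \emph{after} Theorem~5.9, so Kemper's own proof必 works directly with chains of prime ideals and transcendence degrees without any of the machinery you invoke. Your route buys a shorter argument from standard tools; Kemper's buys self-containedness at an earlier stage of the theory. Your reduction to domains (via the nilradical being the intersection of the finitely many minimal primes) and the two preservation statements for the integral extension $k[y_1,\dots,y_d]\subseteq A$ are all sound.

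Three points should be tightened, all routine. First, state the base case of the induction: an affine domain of transcendence degree $0$ is integral over $k$, hence a field, hence zero-dimensional. Second, the fact that $k[y_1,\dots,y_d]/(f)$ has transcendence degree exactly $d-1$ is the pivot of the induction but is only asserted; it needs the (easy) argument that if $f$ involves $y_d$, then $\bar y_1,\dots,\bar y_{d-1}$ stay algebraically independent (a relation among them would be a polynomial of $y_d$-degree zero divisible by $f$), while $\bar y_d$ is algebraic over them because the leading $y_d$-coefficient of $f$ survives in the quotient. Third, bounding $\Kdim$ requires bounding \emph{every} chain, and maximal chains need not a priori exist in an arbitrary ring; this is harmless: for any chain $(0)=\mathfrak p_0\subsetneq\mathfrak p_1\subsetneq\dots\subsetneq\mathfrak p_m$, pick an irreducible factor $f$ of any nonzero element of $\mathfrak p_1$ (so $(f)\subseteq\mathfrak p_1$ is a nonzero principal prime), and push $\mathfrak p_1\subsetneq\dots\subsetneq\mathfrak p_m$ down to a chain of length $m-1$ in $k[y_1,\dots,y_d]/(f)$; no maximality or height-one identification is needed. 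With these repairs the proof is complete.
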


\begin{definition}
Let $A$ be a finitely generated non--negatively  graded algebra over a field $k$. By a \emph{homogeneous system of parameters} for $A$ we mean a sequence of homogeneous elements $F_{1},\ldots,F_{n}$ of positive degree in $A $ such that $n =\Kdim (A)$ and $A/\langle F_{1},\ldots,F_{n}\rangle$ has Krull dimension 0.
\end{definition}

\begin{definition}\cite[Definition~8.1]{Kemper}
Let $R$ be a ring and $S\subseteq R$ be a subring. An element $\alpha\in R$ is called \emph{integral} over $S$ if there exists a monic polynomial $p\in S[x]$ such that $p(\alpha)=0$.  $R$ is called integral over $S$ if all  of whose elements are integral over $S$. In this case, $R$ is called  an \emph{integral extension} of $S$.
\end{definition}

\begin{prop}\label{P:Integral_Extension_of_Polynomial_Ring}\cite[Theorem~1.5.17]{BrHe}
Let  $A$ be a non--negatively graded affine algebra over a field $k$. Set $n=\Kdim~A$. Then
\begin{itemize}
 \item[(1)] for  homogeneous elements $x_1,\ldots, x_n$  the following statements are equivalent:
\begin{itemize}
\item[(i)] $x_1,\ldots, x_n$ is a homogeneous system of parameters,
\item[(ii)] $A$ is an integral extension of $k[x_1,\ldots, x_n]$.
\end{itemize}
\item[(2)] For (1) there always exist homogeneous elements $x_1,\ldots, x_n$ satisfying (i) and hence (ii).
\end{itemize}
\end{prop}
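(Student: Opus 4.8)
The plan is to reduce everything to a few standard facts about graded affine algebras and integral extensions, and to treat the existence statement~(2) as an instance of graded Noether normalization. Throughout I write $B=k[x_1,\dots,x_n]$ for the graded subalgebra of $A$ generated by the homogeneous elements $x_i$, and $\bar A=A/\langle x_1,\dots,x_n\rangle$. The two pillars I would use are: first, that a non-negatively graded affine $k$-algebra has Krull dimension $0$ if and only if it is finite-dimensional as a $k$-vector space (an affine algebra of dimension $0$ is Artinian, hence of finite length); and second, the graded Nakayama lemma, which says that if $\bar A$ is finite-dimensional over $k$ then homogeneous lifts of a $k$-basis of $\bar A$ generate $A$ as a module over the connected graded ring $B$, so that $A$ is module-finite, and hence integral, over $B$.

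For the implication $(i)\Rightarrow(ii)$ I would argue as follows. If $x_1,\dots,x_n$ is a homogeneous system of parameters then $\bar A$ has Krull dimension $0$, so by the first fact it is finite-dimensional over $k$; graded Nakayama then makes $A$ a finitely generated $B$-module, and module-finiteness immediately gives integrality of $A$ over $B$. For the converse $(ii)\Rightarrow(i)$, suppose $A$ is integral over $B$. Since $A$ is a finitely generated $k$-algebra it is a finitely generated $B$-algebra, and an integral finitely generated algebra is module-finite; thus $A$ is a finite $B$-module. Integral extensions preserve Krull dimension, so $\Kdim B=\Kdim A=n$. On the other hand $B$ is generated by $n$ elements, whence $\Kdim B=\trdeg(B)\le n$ by Proposition~\ref{P:trdeg}, and equality forces $x_1,\dots,x_n$ to be algebraically independent. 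Finally $\bar A$ is module-finite over $B/\langle x_1,\dots,x_n\rangle=k$, hence finite-dimensional, so $\Kdim\bar A=0$ and $x_1,\dots,x_n$ is a homogeneous system of parameters.

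For the existence statement~(2) I would prove graded Noether normalization by induction on the number $m$ of homogeneous algebra generators of $A$. If $m=n$ the generators are algebraically independent (again via $\Kdim=\trdeg$) and there is nothing to do. If $m>n$ there is a nontrivial weighted-homogeneous relation among the generators; I would then perform a substitution of the generators that turns one of them into an element integral over the subalgebra generated by the others, reducing $m$ by one while keeping $A$ module-finite---hence integral---over the smaller subalgebra. Iterating produces $n$ algebraically independent homogeneous elements $x_1,\dots,x_n$ over which $A$ is integral, and by part~(1) these form a homogeneous system of parameters.

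The main obstacle is precisely this substitution step in the graded setting. Over an infinite field one may take generic linear combinations of equal-degree generators, but in general the generators carry different degrees and the field may be finite, so one must use a Nagata-type substitution $x_i\mapsto x_i-x_m^{c_i}$ with exponents $c_i$ chosen simultaneously to respect the weighted grading and to make the leading pure power of $x_m$ appear with a nonzero coefficient in the relation. Arranging the degrees to be compatible---for instance by first replacing generators with suitable powers so that the substitution is homogeneous---is the delicate bookkeeping on which the reduction rests; once integrality over a subalgebra generated by fewer homogeneous elements is secured, the dimension-theoretic facts above finish the proof.
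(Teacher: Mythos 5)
The paper offers no proof of this proposition at all: it is quoted from Bruns--Herzog \cite{BrHe}, so your argument can only be measured against the standard textbook proof. Your part (1) is correct and is essentially that proof. The implication (i)$\Rightarrow$(ii) follows from the two pillars you name: an affine $k$-algebra of Krull dimension $0$ is Artinian and hence finite-dimensional over $k$, and the graded Nakayama lemma over the connected graded subring $B=k[x_1,\dots,x_n]$ (which applies because a homogeneous system of parameters consists of elements of positive degree, so $B_0=k$ and $A$ is a bounded-below graded $B$-module). The converse uses module-finiteness, invariance of Krull dimension under integral extensions, and $\Kdim=\trdeg$ (Proposition~\ref{P:trdeg}); only cosmetic points are missing, e.g.\ that algebraic independence forces each $x_i$ to have positive degree, which is needed both for the definition of a homogeneous system of parameters and for your identification $B/\langle x_1,\dots,x_n\rangle_B=k$.

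Part (2), however, has a genuine gap, and it sits exactly at the step you flag as ``delicate bookkeeping'': in the graded setting the Nagata substitution cannot be repaired in the way you propose. Homogeneity of $z_i=x_i^{e_i}-x_m^{c_i}$ forces $e_i\deg x_i=c_i\deg x_m$, so the only substitutions available are of the form ``power minus power'', and these can fail for \emph{every} choice of exponents and of relation. Concretely, take $A=\mathbb{F}_2[x,y]/\bigl(xy(x+y)\bigr)$, a standard graded affine algebra of Krull dimension $1$ with minimal primes $(x)$, $(y)$, $(x+y)$. Any homogeneous element your scheme can produce is $z=x^e+y^e$ (up to swapping $x$ and $y$), and $x^e+y^e$ is divisible by $x+y$, hence lies in the minimal prime $(x+y)$. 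If $A$ were integral over $k[z]$, then $A/(x+y)\cong\mathbb{F}_2[t]$ would be integral over the image of $k[z]$, which is $k$ since $z\in(x+y)$ --- a contradiction. So the inductive step of your scheme is not merely unproven; it is false as stated over finite fields. (Over an infinite field, in particular over $\QQ$, which is all the paper needs, your scheme does work: equalize degrees by powers and then take a generic \emph{linear} substitution.) The proof in \cite{BrHe} uses a different idea altogether, namely homogeneous prime avoidance: if $\Kdim A=n>0$, one finds a homogeneous element $x_1\in A_+$ of positive degree outside every minimal prime $\mathfrak{p}$ with $\Kdim A/\mathfrak{p}=n$ --- the power trick (summing suitable powers of elements adapted to the individual primes so as to equalize degrees) is what makes this possible over an arbitrary field, and in the example above it produces $x^2+xy+y^2$; then $\Kdim A/(x_1)=n-1$ and one concludes by induction, after which your part (1) converts the resulting homogeneous system of parameters into the integrality statement.
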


\begin {lem}\label{L:rational_sphere_surjectivity}
Let $K, H$ be two compact  Lie groups whose classifying spaces $\B K$ and $\B H$ are Sullivan spaces and
let  $K/H$ be a connected positively curved homogeneous space.  Then the map
\[
H^*(B\iota) \colon H^*(\B K, \mathbb{Q})\to H^*(\B H, \mathbb{Q}),
\]
is surjective  if and only if $K/ H$ rationally is an odd--dimensional sphere,  where $\iota \colon H\to K$ is the inclusion map.
\end{lem}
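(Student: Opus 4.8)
The plan is to pass to the explicit pure Sullivan model of $K/H$ provided by Proposition~\ref{P:Sullivan_model_of_a_homogeneous_space} and to reinterpret surjectivity of $H^*(\B\iota)$ as surjectivity of the \emph{linear part} of its differential. First I would reduce to connected groups: since $K/H$ is connected we have $K=K_0H$, so $K/H=K_0/(K_0\cap H)$, and $\B(K_0\cap H)$ is a finite covering of the Sullivan space $\B H$, hence itself Sullivan by Proposition~\ref{P:Covering_of_a_Sullivan_space}. By Corollary~\ref{C_Sullivan_Model_of_BH} the map $H^*(\B\iota)$ is identified with the corresponding map $H^*(\B K_0)\to H^*(\B H_0)$, so surjectivity is unaffected and I may assume $K,H$ connected. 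Then $H^*(\B H,\QQ)\cong(\Lambda V_{\B H},0)\cong\QQ[y_1,\dots,y_h]$ is a polynomial algebra on $h=\rank H$ even generators, and by Proposition~\ref{P:Sullivan_model_of_a_homogeneous_space} a model for $K/H$ is the pure algebra $(\Lambda V_{\B H}\otimes \Lambda V_K,d)$ with $V_K=\langle v_1,\dots,v_k\rangle$ in odd degrees, $k=\rank K$, and $dv_i=P_i:=H^*(\B\iota)(x_i)\in\Lambda V_{\B H}$.

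The key observation I would record is that, since $\QQ[y_1,\dots,y_h]$ is connected and graded, graded Nakayama shows that $H^*(\B\iota)$ is surjective if and only if the classes $P_1,\dots,P_k$ span the indecomposables $V_{\B H}\cong(\Lambda^+V_{\B H})/(\Lambda^+V_{\B H})^2$, i.e.\ if and only if the linear part $d_0\colon V_K\to V_{\B H}$ of the differential (the projection of the $P_i$ onto the indecomposables) is surjective. On the other hand, minimalising the pure model cancels one acyclic pair, consisting of one odd and one even generator, for each independent class in $\im d_0$; hence the number of even generators of the minimal Sullivan model of $K/H$ equals $h-\dim\im d_0=\dim\operatorname{coker}d_0$, a rational homotopy invariant. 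Consequently $H^*(\B\iota)$ is surjective precisely when the minimal model of $K/H$ has \emph{no} even generators at all.

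Now I would exploit purity: since the model is pure, so is its minimal model, so the differential maps odd generators into the subalgebra generated by the even generators. If there are no even generators this subalgebra is $\QQ$, forcing the differential to vanish; thus a surjective $H^*(\B\iota)$ makes the minimal model $(\Lambda W,0)$ with $W$ concentrated in odd degrees, i.e.\ $K/H$ is rationally a product of $\dim W$ odd spheres. Counting generators before and after cancellation gives $\dim W=k-\dim\im d_0=\rank K-\rank H$ when $d_0$ is onto. At this point the positive curvature hypothesis enters through the classification \cite{WZ}, which forces $\rank K-\rank H\le 1$; since $K/H$ has positive dimension the number of factors is at least one, so it is exactly one and $K/H$ is rationally a single odd--dimensional sphere. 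Conversely, if $K/H$ is rationally an odd sphere its minimal model is $(\Lambda(z),0)$ with $z$ odd, so it has no even generators, $\operatorname{coker}d_0=0$, and $H^*(\B\iota)$ is surjective. In the equal--rank subcase $\rank K=\rank H$ the same count shows that surjectivity would force $K/H$ to be rationally a point, contradicting positivity of its dimension; alternatively Proposition~\ref{P:Cohomology_Homogeneous_Sp} already concentrates its cohomology in even degrees, so it is not an odd sphere and both sides of the equivalence fail.

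The main obstacle I anticipate is the bookkeeping in the minimalisation step: making precise that cancelling acyclic pairs governed by $\im d_0$ simultaneously computes the number of surviving even generators as $\dim\operatorname{coker}d_0$ and preserves purity, so that ``no even generators'' really does collapse the differential to zero. Once this is in place the equivalence follows formally, with the rank bound from \cite{WZ} doing the final work of upgrading ``product of odd spheres'' to ``a single odd sphere''.
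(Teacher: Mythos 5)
Your overall route is genuinely different from the paper's: you prove a structural statement (surjectivity of $H^*(\B\iota)$ $\Leftrightarrow$ surjectivity of the linear part $d_0$ of the pure model $\Leftrightarrow$ no even generators in the minimal model $\Leftrightarrow$ $K/H$ rationally a product of odd spheres) and only then invoke the rank bound from \cite{WZ}, whereas the paper proves ``odd sphere $\Rightarrow$ surjective'' by an induction on degrees and proves the converse by a case-by-case analysis over the classification (even-dimensional spaces, $W^7_{p,q}/\Gamma$, $B^{13}$). Your graded-Nakayama observation and the minimalisation bookkeeping (generators of the minimal model $\cong H(V,d_0)$, purity preserved) are standard and fine, and in the corank-one case your argument is correct and arguably cleaner than the paper's ad hoc degree-two argument for $W^7_{p,q}/\Gamma$ and $B^{13}$.

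However, there is a genuine gap in the equal-rank case, and it sits exactly where the Sullivan hypothesis is load-bearing. You dismiss $\rank K=\rank H$ by asserting that surjectivity would make $K/H$ ``rationally a point, contradicting positivity of its dimension.'' That implication is false in general: a positive-dimensional space can have trivial rational cohomology, and this is not hypothetical here. For $\mathbb{RP}^{2n}=\SO(2n+1)/\OO(2n)$, a connected positive-dimensional positively curved homogeneous space, the restriction $H^*(\B \SO(2n+1),\QQ)\to H^*(\B \OO(2n),\QQ)$ is surjective (both rings are $\QQ[p_1,\ldots,p_n]$, with $p_i\mapsto p_i$), yet $\mathbb{RP}^{2n}$ is not rationally an odd sphere; the only hypothesis failing is that $\B \OO(2n)$ is Sullivan. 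So the lemma is outright false without the Sullivan assumption, and any proof must use that assumption precisely in this case; your proof uses Sullivan-ness only in the reduction to connected groups and never again. The case can be repaired, but it needs an argument you do not give: after reducing to connected groups, $K_0/H_0$ is a closed \emph{orientable} manifold, so Poincar\'e duality forces a rationally trivial one to be a point, whence $H_0=K_0$ and $K/H$ is zero-dimensional --- alternatively one argues as the paper does via Proposition \ref{L:Even_Dim_Sullivan}, which exists exactly to kill this case. Your ``alternative'' appeal to Proposition \ref{P:Cohomology_Homogeneous_Sp} does not close the gap: it shows the right-hand side of the equivalence fails (not an odd sphere), while the direction at stake requires showing the left-hand side (surjectivity) fails. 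Finally, since your reduction replaces $K/H$ by the finite cover $K_0/H_0$ (rather than writing $K/H=K_0/(K_0\cap H)$ and applying Proposition \ref{P:Sullivan_model_of_a_homogeneous_space} to $K_0\cap H$, whose classifying space is Sullivan), you should also justify that ``rationally an odd sphere'' is unaffected by passing to this cover; this again uses the Sullivan hypothesis and genuinely fails without it, as the pair $\mathbb{RP}^{2n}$ versus $S^{2n}$ shows.
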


\begin{proof}
Let $\iota_{0} \colon K_{0}\cap H\to K_{0}$. Then by Corollary~\ref{C_Sullivan_Model_of_BH}, $H^*(\B \iota)$ is surjective if and only if $H^*(\B \iota_{0})$ is surjective. Since $K/H\simeq K_{0}/K_{0}\cap H$, we can assume, without loss of generality, that $K$ is connected. Moreover, let $H^*(\B K, \mathbb{Q})=\QQ[x_{1}, \ldots, x_{k}]$ and $H^*(\B H, \mathbb{Q})=\QQ[y_{1}, \ldots, y_{l}]$.

First assume that  $K/H$ is rationally  an odd--dimensional sphere and let
$$(\QQ[y_{1}, \ldots, y_{l}]\otimes \Lambda \langle v_{1}, \ldots, v_{k}\rangle, d)
$$
be  a Sullivan model for $K/H$. Since $d\mid_{\QQ[y_{1}, \ldots, y_{l}]}=0$, every element in $\QQ[y_{1}, \ldots, y_{l}]$ has to be exact.
Using induction on the degree of the generators of $H^*(\B H, \mathbb{Q})$,  we show  that $H^{*}(\B \iota)$ is  surjective.  First  assume that for each $i$, $1\leq i\leq l-1$, and for each $j$, $1\leq j\leq k-1$,  we have that   $\deg y_{i}, \leq \deg y_{i+1}$ and $\deg v_{j}, \leq \deg v_{j+1}$. Then there exists an element of the form  $\sum a_{si}v_{si}\in \Lambda\langle v_{1}, \ldots, v_{k}\rangle$ such that $d(\sum a_{si}v_{si})=y_s$ for all $s$ with $\deg y_{s}=\deg y_{1}$. Therefore, $H^{*}(\B \iota)(\sum a_{si}v_{si})=y_s$. Now suppose that for each $i$ with $\deg y_{1}\leq \deg y_{i}\leq t<\deg y_{l}$, there exists  $z_i\in \QQ[x_{1}, \ldots, x_{k}]$ such that $H^{*}(\B\iota)(z_i)=y_i$. To finish the proof, we  show that each $y_{i}$ with $\deg y_{i}=t+1$ lies in the image of $H^{*}(\B\iota)$. Since $y_{i}$ is an exact element in $(\QQ[y_{1}, \ldots, y_{l}]\otimes \Lambda \langle v_{1}, \ldots, v_{k}\rangle, d)$, there exists an element  $\sum b_{ij} v_{ij}+P(y_1, \ldots, y_{r_1}, v_1, \ldots, v_{r_2})$ such that
  \begin{align*}
  y_{i}&=d(\sum b_{ij}v_{ij}+P(y_1, \ldots, y_{r_1}, v_1, \ldots, v_{r_2}))\\
  &=\sum b_{ij}dv_{ij}+d(P(y_1, \ldots, y_{r_1}, v_1, \ldots, v_{r_2}))\\
  &=\sum b_{ij}H^{*}(\B\iota)(x_{ij})+d(P(y_1, \ldots, y_{r_1}, v_1, \ldots, v_{r_2})),
  \end{align*}
  where $P(y_1, \ldots, y_{r_1}, v_1, \ldots, v_{r_2})$ is a polynomial with $\deg y_j<\deg y_{i} $ and $\deg v_j<\deg y_{i} $, for $1\leq j\leq r_1$ and $1\leq j\leq r_2$, respectively. Since  $d(P(y_1, \ldots, y_{r_1}, v_1, \ldots, v_{r_2}))$ is a polynomial in $\QQ[y_{1}, \ldots, y_{l}]$
 which is generated by the algebra generators whose degrees are smaller than  $\deg y_{i}$, by the induction hypothesis, there exists $z\in H^*(\B K, \mathbb{Q})$ such that $H^{*}(\B\iota)(z)=d(P(y_1, \ldots, y_{r_1}, v_1, \ldots, v_{r_2})$. Consequently, $y_{i}\in \im H^{*}(\B\iota)$

\vspace{3mm}
Conversely, assume that $H^*(\B\iota)$ is surjective and $K/H$ is not rationally an odd-dimensional sphere. We show that the other possibilities for $K/H$ yields a contradiction.  Note that since an odd-dimensional positively curved homogeneous space whose universal covering space is a sphere is rationally nilpotent by Proposition~\ref{P:non-simply-connected_homogeneous_space_Rationally_Nilpotent}, it has the same rational homotopy type as an odd-dimensional sphere by Proposition~\ref{L_Sullivan_Model_of_Non_Simply_Connected}.    First,  we  show that $K/H$ is not an even-dimensional positively curved homogeneous space. If so, then by Lemma \ref{L:Equal_rank_injectivity} we have that $H^*(\B\iota_{0})$ is injective which implies that $H^{*}(K/H, \QQ)=\QQ$. This can only occur  if $K/H$ is not simply-connected (for example,  $K/H=\mathbb{RP}^{2n}$).  However,  Proposition \ref{L:Even_Dim_Sullivan} rules out this case.  Now assume that $K/H$ is odd-dimensional. We rule out the cases where the universal covering of $K/H$ is $W^7_{p,q}$ or $B^{13}$. First note that by Proposition~\ref{P:non-simply-connected_homogeneous_space_Rationally_Nilpotent}, $K/H$ is rationally nilpotent and hence a Sullivan space by Proposition~\ref{P:Rationally_Nilpotent_is_Sullivan}. As a result, Proposition~\ref{L_Sullivan_Model_of_Non_Simply_Connected} implies that $K/H$ has the same rational homotopy type as $W^7_{p,q}$ and $B^{13}$, respectively. Moreover, it is known that $W^7_{p,q}\simeq_{\mathbb{Q}} S^2\times S^5$, and $B^{13}\simeq_{\mathbb{Q}} \mathbb{CP}^2\times S^9$. In particular,  $H^{1}(K/H, \QQ)= 0$ and $H^{2}(K/H, \QQ)\neq 0$. Let $(H^{*}(\B H, \QQ)\otimes \Lambda V_{K}, d)$ be a Sullivan model for $K/H$.  Since $H^{2}(K/H, \QQ)\neq 0$, there  exists a cocycle  $x$ of degree $2$ in $H^{*}(\B H, \QQ)\otimes \Lambda V_{K}$ which is not exact. Since $H^{1}(K/H, \QQ)= 0$ we have that $x\in H^{*}(\B H, \QQ)$. Since  $\deg x=2$ and $H^*(\B\iota)$ is surjective, then $x=H^*(\B\iota)(\sum a_{i}x_{i})$, where  $a_{i}\in \QQ$ and $x_{i}\in H^*(\B K, \mathbb{Q})=\QQ[x_{1}, \ldots, x_{k}]$ with $\deg x_{i}=2$. It implies, by the definition of differential of $(H^{*}(\B H, \QQ)\otimes \Lambda V_{K}, d)$, that $x$ is exact. Therefore,  $K/H$, up to the universal cover, cannot be  $W^7_{p,q}$ or $B^{13}$.
\end{proof}

\begin{lem}\label{L:Equal_rank_injectivity}
Let $K, H$ be two compact  Lie groups  with $\rank H=\rank K$.
 Assume additionally  that the classifying spaces $\B K$ and $\B H$ are Sullivan spaces. Then the morphism

\[
H^{*}(\B\iota) \colon H^*(\B K, \mathbb{Q})\to H^*(\B H, \mathbb{Q}),
\]
is injective.
\end{lem}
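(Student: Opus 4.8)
The plan is to reduce to connected groups and then invoke the classical injectivity of the restriction of $H^{*}(\B G,\QQ)$ to the classifying space of a maximal torus. For the reduction, write $j_{H}\colon H_{0}\to H$ and $j_{K}\colon K_{0}\to K$ for the inclusions of the identity components and $\iota_{0}\colon H_{0}\to K_{0}$ for the restriction of $\iota$ (this lands in $K_{0}$ because $H_{0}$ is connected). The equality $j_{K}\circ\iota_{0}=\iota\circ j_{H}$ of inclusions $H_{0}\hookrightarrow K$ yields, after applying $H^{*}(\B(-),\QQ)$, a commutative square
\[
\begin{CD}
H^{*}(\B K,\QQ) @>{H^{*}(\B\iota)}>> H^{*}(\B H,\QQ)\\
@V{H^{*}(\B j_{K})}VV @VV{H^{*}(\B j_{H})}V\\
H^{*}(\B K_{0},\QQ) @>{H^{*}(\B\iota_{0})}>> H^{*}(\B H_{0},\QQ)
\end{CD}
\]
in which both vertical maps are isomorphisms by Corollary~\ref{C_Sullivan_Model_of_BH}; this is exactly where the Sullivan hypotheses on $\B H$ and $\B K$ enter. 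Consequently $H^{*}(\B\iota)$ is injective if and only if $H^{*}(\B\iota_{0})$ is, and it suffices to treat the connected groups $H_{0}\subseteq K_{0}$.

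Next I would introduce a common maximal torus. Since the rank of a compact Lie group equals that of its identity component, the hypothesis gives $\rank H_{0}=\rank K_{0}=:r$. Choose a maximal torus $T\subseteq H_{0}$ of dimension $r$; as $T\subseteq H_{0}\subseteq K_{0}$ is an $r$-dimensional torus and $\rank K_{0}=r$, it is simultaneously a maximal torus of $K_{0}$. Writing $\rho_{H}\colon H^{*}(\B H_{0},\QQ)\to H^{*}(\B T,\QQ)$ and $\rho_{K}\colon H^{*}(\B K_{0},\QQ)\to H^{*}(\B T,\QQ)$ for the maps induced by the torus inclusions, the factorisation $T\hookrightarrow H_{0}\hookrightarrow K_{0}$ gives $\rho_{K}=\rho_{H}\circ H^{*}(\B\iota_{0})$. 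By the classical theorem of Borel, $\rho_{K}$ is injective --- indeed it identifies $H^{*}(\B K_{0},\QQ)$ with the Weyl-group invariants $H^{*}(\B T,\QQ)^{W}$. Since $\rho_{K}=\rho_{H}\circ H^{*}(\B\iota_{0})$ is injective, its first factor $H^{*}(\B\iota_{0})$ is injective as well, and the reduction of the first paragraph finishes the proof.

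I expect the only genuine subtlety to be the passage to identity components: for disconnected groups the restriction maps need not be injective, and it is precisely Corollary~\ref{C_Sullivan_Model_of_BH}, available because $\B H$ and $\B K$ are Sullivan spaces, that legitimises replacing $H,K$ by $H_{0},K_{0}$; once connectivity is achieved the argument is entirely classical.

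A variant that stays closer to this paper's machinery avoids Borel's theorem and instead proves injectivity of $H^{*}(\B\iota_{0})$ directly. By Proposition~\ref{P:Sullivan_model_of_a_homogeneous_space}, $K_{0}/H_{0}$ carries a pure Sullivan model with $\rank K_{0}$ exterior generators $v_{i}$ satisfying $d v_{i}=H^{*}(\B\iota_{0})(x_{i})$ inside the polynomial algebra $H^{*}(\B H_{0},\QQ)$, whose Krull dimension is $\rank H_{0}=\rank K_{0}$. As $K_{0}/H_{0}$ is compact its cohomology is finite-dimensional, and together with the equality of the two generator counts this forces the $d v_{i}$ to be a homogeneous system of parameters; a homogeneous system of parameters of maximal length in a polynomial ring is algebraically independent, since $\Kdim=\trdeg$ by Proposition~\ref{P:trdeg}. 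Algebraic independence of the images of the generators $x_{i}$ is precisely injectivity of $H^{*}(\B\iota_{0})$.
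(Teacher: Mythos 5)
Your first two paragraphs are correct and are essentially the paper's own proof: reduce to identity components via Corollary~\ref{C_Sullivan_Model_of_BH} (exactly where the Sullivan hypothesis enters), pass to a common maximal torus $T$, and invoke Borel's theorem. The paper phrases the last step as an inclusion of Weyl-invariant subrings $H^*(\B T, \QQ)^{W(K)}\subseteq H^*(\B T, \QQ)^{W(H)}$ using $W(H)\subseteq W(K)$; your factorisation $\rho_{K}=\rho_{H}\circ H^{*}(\B\iota_{0})$ with $\rho_{K}$ injective is the same idea, packaged so that the Weyl-group inclusion is not even needed.

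Your closing variant, by contrast, is a genuinely different route, and it is sound: it stays entirely inside the machinery the paper itself builds (Propositions~\ref{P:Sullivan_model_of_a_homogeneous_space}, \ref{P:trdeg} and \ref{P:Integral_Extension_of_Polynomial_Ring}) and avoids Borel's theorem and Weyl groups altogether, which is aesthetically pleasant given that the surrounding lemmata are proved in the same commutative-algebra style. The one step you should spell out is why finite-dimensionality of $H^{*}(K_{0}/H_{0},\QQ)$ forces $H^{*}(\B H_{0},\QQ)/\langle dv_{1},\ldots,dv_{n}\rangle$ to be finite-dimensional, hence of Krull dimension $0$, so that the $dv_{i}$ (being $n=\Kdim H^{*}(\B H_{0},\QQ)$ in number) form a homogeneous system of parameters. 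This is true but needs an argument: in the pure model the differential lowers the word-length in the exterior generators $v_{i}$ by exactly one, so the cohomology splits as a direct sum over exterior degree, and its exterior-degree-zero summand is precisely $H^{*}(\B H_{0},\QQ)/\langle dv_{1},\ldots,dv_{n}\rangle$; being a summand of a finite-dimensional space, it is finite-dimensional. With that supplied, your chain---system of parameters, hence integral extension, hence the subalgebra generated by the $dv_{i}$ has Krull dimension and transcendence degree $n$, hence the $dv_{i}$ are algebraically independent, hence $H^{*}(\B\iota_{0})$ is injective---is complete. What the variant buys is independence from the classical invariant-theoretic description of $H^{*}(\B G,\QQ)$; what it costs is reliance on the structure theory of pure Sullivan algebras, which the Borel-theorem argument sidesteps.
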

\begin{proof}
As in the proof of  Lemma~\ref{L:rational_sphere_surjectivity},  $H^{*}(\B\iota)$   is injective if and only if $H^{*}(\B\iota_{0})$ is injective, where $\iota_{0} \colon  H_0\to K_{0}$.  Hence, we can assume, without loss of generality, that $K$ and $H$ are connected.

Denote by $T$ a common maximal torus. Then we identify the Lie group cohomology with the invariants of the cohomology of the maximal torus under the respective Weyl group action, i.e.~$H^*(\B H, \QQ)=H^*(\B T, \QQ)^{W(H)}$ and $H^*(\B K, \QQ)=H^*(\B T, \QQ)^{W(K)}$. The map $H^{*}(\B\iota)$ is the map induced by the inclusion $H\hookrightarrow{} K$, i.e.~the morphism $H^*(\B T, \QQ)^{W(K)}\to H^*(\B T,\QQ)^{W(H)}$ induced by the identity on the maximal torus, which then is necessarily injective, since $W(H)=N_H(T)/T \subseteq N_K(T)/T=W(K)$ using the description via normalizers.
\end{proof}

Since the Euler characteristics of  even dimensional  positively  curved homogeneous spaces $K/H$ are positive, we have that  $\rank K=\rank H$ in this case.  Hence  the following corollary is immediate.  

\begin{cor}\label{L:Equal_rank_injectivity_PC}
Let $K, H$ be two compact  Lie groups  
such that $K/H$ is homeomorphic to an even-dimensional positively curved homogeneous space. 
 Assume additionally  that the classifying spaces $\B K$ and $\B H$ are Sullivan spaces. Then the map

\[
H^{*}(\B\iota) \colon H^*(\B K, \mathbb{Q})\to H^*(\B H, \mathbb{Q}),
\]
is injective.
\end{cor}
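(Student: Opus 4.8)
The plan is to reduce the statement to Lemma~\ref{L:Equal_rank_injectivity} by checking that the rank hypothesis $\rank H=\rank K$ is automatically met here; once that is established, the asserted injectivity of $H^*(\B\iota)$ is exactly the conclusion of that lemma, and nothing further is needed. Note that $\rank H\leq \rank K$ always holds since $H\subseteq K$, so it suffices to rule out $\rank H<\rank K$.

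First I would exploit that the Euler characteristic is a homeomorphism invariant. Since $K/H$ is homeomorphic to an even-dimensional positively curved homogeneous space, and such spaces have positive Euler characteristic, we get $\chi(K/H)>0$. The positivity of the Euler characteristic is the one external input: it is classical, following either from the Wallach--B\'erard--Bergery classification by a short case check, or, in the spirit of the proof of Proposition~\ref{L:Even_Dim_Nilpotency}, from Synge's theorem, which in the non-simply-connected case exhibits $K/H$ as a two-sheeted quotient of a simply-connected even-dimensional positively curved homogeneous space (whence $\chi(K/H)=\tfrac12\chi(\tilde{K/H})>0$).

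Next I would invoke the Hopf--Samelson dichotomy for homogeneous spaces. Because $K/H$ is connected we may pass to identity components and write $K/H\cong K_0/(K_0\cap H)$, where $K_0$ is connected and $\rank(K_0\cap H)=\rank H_0=\rank H$, $\rank K_0=\rank K$, exactly as in the openings of the proofs of Lemmas~\ref{L:rational_sphere_surjectivity} and~\ref{L:Equal_rank_injectivity}. Computing $\chi$ via the fixed points of a maximal torus $T\subseteq K_0$ acting by left translation, one has $\chi\big(K_0/(K_0\cap H)\big)\neq 0$ if and only if some conjugate of $T$ lies in $K_0\cap H$, i.e.~if and only if $\rank K_0=\rank(K_0\cap H)$. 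Hence $\chi(K/H)>0$ forces $\rank K=\rank H$, and Lemma~\ref{L:Equal_rank_injectivity} then yields the injectivity of $H^*(\B\iota)$.

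The only step requiring genuine care is the input that even-dimensional positively curved homogeneous spaces have positive Euler characteristic; the remaining content is the standard rank--Euler characteristic equivalence together with the elementary bookkeeping for possibly disconnected $K$ and $H$. As the remark preceding the statement indicates, once this input is granted the corollary is indeed immediate.
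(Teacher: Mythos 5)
Your proposal is correct and follows essentially the same route as the paper: the paper likewise deduces $\rank K=\rank H$ from the positivity of the Euler characteristic of even-dimensional positively curved homogeneous spaces (via the rank--Euler characteristic criterion of \cite{Wang1949}) and then declares the corollary immediate from Lemma~\ref{L:Equal_rank_injectivity}. You merely spell out what the paper leaves implicit in the sentence preceding the statement, namely the homeomorphism invariance of $\chi$, the reduction to identity components, and the Hopf--Samelson fixed-point argument.
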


\begin{lem}\label{L:Zero-divisor_in_even_degree_1}(cf. \cite[Lemma~B]{Eakin})
Let $K$ be a compact Lie group and  $H$ be a closed subgroup
and let  $\iota\colon H\hookrightarrow K$ be the inclusion map. Suppose that  the map $H^{*}(\B\iota)$ is injective. Then $\rank~H=\rank~K$.
\end{lem}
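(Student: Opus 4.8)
The plan is to reduce the statement to two standard facts: that the Krull dimension of the rational cohomology ring of the classifying space of a compact Lie group equals its rank, and that $H^*(\B H,\QQ)$ is a finitely generated module over $H^*(\B K,\QQ)$ via $H^*(\B\iota)$. Once these are in place, injectivity of $H^*(\B\iota)$ is exactly the condition that prevents a drop in dimension, and the conclusion $\rank H=\rank K$ falls out of the behaviour of Krull dimension under integral extensions.

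First I would record the dimension count. For a compact Lie group $L$ the finite covering $\B L_0\to \B L$ (with deck group $L/L_0$) realizes $H^*(\B L,\QQ)$ as the invariant ring $H^*(\B L_0,\QQ)^{L/L_0}$ over the rationals. Since $L_0$ is connected, $H^*(\B L_0,\QQ)$ is a polynomial ring on $\rank L_0=\rank L$ generators, and it is a finite module over its invariant subring; hence both rings have Krull dimension $\rank L$, equivalently $\trdeg_\QQ H^*(\B L,\QQ)=\rank L$ by Proposition~\ref{P:trdeg}. Applying this to $L=K$ and to $L=H$ gives $\Kdim H^*(\B K,\QQ)=\rank K$ and $\Kdim H^*(\B H,\QQ)=\rank H$.

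Next I would establish the module-finiteness. The bundle $K/H\to \B H\xrightarrow{\B\iota}\B K$ has compact fibre, so $H^*(K/H,\QQ)$ is finite-dimensional. Feeding this into the Leray--Serre spectral sequence, whose differentials are $H^*(\B K,\QQ)$-linear, shows that each page, and hence $\operatorname{gr} H^*(\B H,\QQ)$ and therefore $H^*(\B H,\QQ)$ itself, is a finitely generated module over the Noetherian ring $H^*(\B K,\QQ)$ through $H^*(\B\iota)$. The bookkeeping with components is routine: one treats the connected case via the spectral sequence and then descends along the finite covers $\B K_0\to\B K$ and $\B H_0\to\B H$, using that a submodule of a finitely generated module over a Noetherian ring is again finitely generated.

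Finally, set $A=H^*(\B K,\QQ)$, $B=H^*(\B H,\QQ)$ and $\phi=H^*(\B\iota)$. Since $B$ is a finite module over $A$ via $\phi$, the ring $B$ is integral over the subring $\im\phi\cong A/\ker\phi$, so by the standard dimension formula for integral extensions (cf. Proposition~\ref{P:Integral_Extension_of_Polynomial_Ring}) we get $\Kdim B=\Kdim(A/\ker\phi)$. The hypothesis that $\phi$ is injective means $\ker\phi=0$, whence $\Kdim B=\Kdim A$, and combined with the first step this yields $\rank H=\rank K$. I expect the only genuinely substantive point to be the module-finiteness of $H^*(\B H,\QQ)$ over $H^*(\B K,\QQ)$ together with the clean observation that injectivity is precisely what upgrades the general inequality $\rank H\leq\rank K$ to equality; the rank--Krull dimension identification and the disconnected-component bookkeeping are standard.
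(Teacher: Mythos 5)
Your argument is correct, but it runs along a genuinely different line from the paper's. The paper converts injectivity into a dimension inequality purely through transcendence degree: citing \cite[Corollary~2.7]{GoMa14} for the fact that $H^*(\B K,\QQ)$ and $H^*(\B H,\QQ)$ are affine algebras whose Krull dimensions equal $\rank K$ and $\rank H$, it views $H^*(\B K,\QQ)$ (via the injective map $H^*(\B\iota)$) as a graded subalgebra of $H^*(\B H,\QQ)$ and concludes from Proposition~\ref{P:trdeg} that $\rank K=\Kdim H^*(\B K,\QQ)=\trdeg H^*(\B K,\QQ)\leq \trdeg H^*(\B H,\QQ)=\rank H$, the reverse inequality being automatic from $H\subseteq K$. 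You instead convert injectivity into a dimension equality through integrality: you first prove Venkov-type finiteness for the pair $(K,H)$ (the Serre spectral sequence of $K/H\to\B H\to\B K$ plus descent to identity components), deduce that $H^*(\B H,\QQ)$ is integral over $\im H^*(\B\iota)$, and invoke invariance of Krull dimension under integral extensions. Each route buys something: the paper's needs no module-finiteness of $H^*(\B H,\QQ)$ over $H^*(\B K,\QQ)$ at all---only that both rings are affine---so given the quoted inputs it is a three-line proof; yours is more self-contained (the spectral-sequence step replaces the appeal to \cite{GoMa14}), does not need the trivial inequality $\rank H\leq\rank K$ as a separate ingredient, and in fact proves the sharper statement that $\rank H$ equals the Krull dimension of $\im H^*(\B\iota)$ whether or not the map is injective. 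One small repair: the dimension equality for integral extensions is not really Proposition~\ref{P:Integral_Extension_of_Polynomial_Ring} (which concerns homogeneous systems of parameters); the fact you want is the standard going-up/incomparability statement, essentially the same one the paper takes from \cite[Corollary~A.8]{BrHe} in the proof of Lemma~\ref{L:Reg_Seq_Heven_is_Reg_Seq_H}.
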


\begin{proof}
Since $H$ is a subgroup of $K$, we only need to prove that $\rank~K\leq \rank~H$. First note that by \cite[Corollary~2.7]{GoMa14}, we have that  $H^*(\B K, \mathbb{Q})$ and $H^*(\B H, \mathbb{Q})$ are Cohen--Macaulay rings whose Krull dimensions are equal to their ranks. Further, since $H^{*}(\B\iota)$ is injective, $H^*(\B K, \mathbb{Q})$ is isomorphic to a graded subalgebra of  $H^*(\B H, \mathbb{Q})$.  Therefore, by Proposition~\ref{P:trdeg},  we have that
\begin{align*}
\rank~K=\Kdim~H^*(\B K, \mathbb{Q})=\trdeg~H^*(\B K, \mathbb{Q}) \\
\leq\trdeg~H^*(\B H, \mathbb{Q})&=\Kdim~H^*(\B H, \mathbb{Q})=\rank~H.
\end{align*}
\end{proof}

\begin{lem}\label{L:Zero-divisor_in_even_degree_2}
Let $X$ be a generalized cohomogeneity one space given by  a group diagram $(G, H, K^{-}, K^{+})$ with $\B H$, $\B K^{-}$, and $\B K^{+}$ Sullivan spaces. Assume further that $\rank~H<\max\{\rank~K^{-}, \rank K^{+}\}$.
 If for the morphisms
\[
\phi=H^{*}(\B\iota^-) \colon H^*(\B K^-, \mathbb{Q})\to H^*(\B H, \mathbb{Q}),
\]
and
\[
\psi=H^{*}(\B\iota^+)\colon H^*(\B K^+, \mathbb{Q})\to H^*(\B H, \mathbb{Q}),
\]
the following relation holds
$$\im \phi +\im \psi\subsetneq H^*(\B H, \mathbb{Q}),$$
then $H(\tilde{D})$ has a zero-divisor of even degree which is not a zero divisor in $H^{\textrm{even}}(\tilde{D})$.
\end{lem}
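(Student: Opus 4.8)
The plan is to compute $H(\tilde{D})$ through the Mayer--Vietoris sequence attached to the double mapping cylinder description of $X_G$ in Propositions~\ref{P:X_G as a double mapping cylinder} and~\ref{P:Equivariant_Cohomology}, and then to exhibit the zero divisor by hand. Writing $\phi=H^*(\B\iota^-)$, $\psi=H^*(\B\iota^+)$ between the polynomial rings $H^*(\B K^\pm_0,\QQ)$ and $H^*(\B H_0,\QQ)$ (all concentrated in even degrees), the long exact sequence
\[
\cdots\to H^{n}(X_G)\xrightarrow{r} H^n(\B K^-)\oplus H^n(\B K^+)\xrightarrow{(\phi,-\psi)} H^n(\B H)\xrightarrow{\delta} H^{n+1}(X_G)\to\cdots
\]
degenerates in a checkerboard fashion: in even degrees the connecting map vanishes, so $r$ identifies $H^{\mathrm{even}}(\tilde D)$ with the fiber product ring $R:=H^*(\B K^-_0)\times_{H^*(\B H_0)}H^*(\B K^+_0)=\{(\alpha,\beta)\mid \phi\alpha=\psi\beta\}$; in odd degrees $r$ vanishes, so $\delta$ identifies $H^{\mathrm{odd}}(\tilde D)$ with the degree-shifted cokernel $M:=H^*(\B H_0,\QQ)/(\im\phi+\im\psi)$. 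By hypothesis $\im\phi+\im\psi\subsetneq H^*(\B H_0,\QQ)$, so $M\neq 0$. Since Mayer--Vietoris is a sequence of $H^*(X_G)$-modules, an element $z=(\alpha,\beta)\in R\cong H^{\mathrm{even}}(\tilde D)$ acts on $M$ as multiplication by the common value $\mu:=\phi\alpha=\psi\beta$ (its restriction to $\B H_0$); this is the one structural fact I will need to pin down.

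Next I would manufacture a homogeneous $z\in R$ of positive even degree with $\mu=0$. Because $\rank H<\max\{\rank K^-,\rank K^+\}$, the contrapositive of Lemma~\ref{L:Zero-divisor_in_even_degree_1} shows that at least one of $\phi,\psi$ fails to be injective; relabelling $K^\pm$ if necessary I may assume $\ker\phi\neq 0$ and pick a nonzero homogeneous $\alpha\in\ker\phi$. If $\ker\psi=0$ I set $z=(\alpha,0)$; if $\ker\psi\neq 0$ I choose a nonzero homogeneous $\beta\in\ker\psi$ and set $z=(\alpha^{\deg\beta},\beta^{\deg\alpha})$, the powers serving only to equalize degrees. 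In either case $\phi$ and $\psi$ send both coordinates to $0$, so $z\in R$ has $\mu=0$ and therefore annihilates all of $M$; as $M\neq 0$, the class $z$ kills a nonzero odd class and is thus a genuine zero divisor in $H(\tilde D)$ of even degree.

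It remains to check that the same $z$ is a non-zero divisor inside $H^{\mathrm{even}}(\tilde D)\cong R$. Here I exploit that $R$ is a subring of the product $H^*(\B K^-_0)\oplus H^*(\B K^+_0)$ of the integral domains $H^*(\B K^\pm_0,\QQ)$. If $z\cdot(\alpha',\beta')=0$ in $R$, then each coordinate of $z$ times the corresponding coordinate of $(\alpha',\beta')$ vanishes; in the two-coordinate case both coordinates of $z$ are nonzero, so the domain property forces $(\alpha',\beta')=0$, while in the one-coordinate case $z=(\alpha,0)$ forces $\alpha'=0$, and then $(0,\beta')\in R$ together with $\ker\psi=0$ forces $\beta'=0$. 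Either way $z$ is a non-zero divisor in $R$, completing the argument.

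The main obstacle I anticipate is not the construction of $z$ but the bookkeeping that legitimizes it: establishing that the restriction map $r$ is a ring isomorphism onto the fiber product in even degrees and that $\delta$ is $H^*(X_G)$-linear, so that the action of $H^{\mathrm{even}}(\tilde D)$ on $H^{\mathrm{odd}}(\tilde D)$ is indeed multiplication by $\mu$ (this is precisely what makes $\mu=0$ decisive) and so that ``non-zero divisor'' may legitimately be tested in $R$ rather than in the abstract Grove--Halperin model $\tilde D$. I would also take care to transport everything through Corollary~\ref{C_Sullivan_Model_of_BH}, which allows me to replace $H$ and $K^\pm$ by their identity components and hence to work with genuine polynomial domains throughout.
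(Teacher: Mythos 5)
Your proposal is correct, and it reaches the conclusion by a genuinely different route than the paper. The paper never computes $H(\tilde D)$ as a whole: it works cocycle by cocycle inside the Grove--Halperin model, lifting an element $z\notin \im\phi+\im\psi$ into the contractible algebra $C=\Lambda(V\oplus dV)$ to produce an odd-degree cocycle $(dw,0)$ with $\Phi(w)=z$, proving $[(dw,0)]\neq 0$ by an explicit computation unwinding the structure of $C$, and then checking by a second hands-on computation that the kernel element supplied by Lemma~\ref{L:Zero-divisor_in_even_degree_1} (namely $(0,\alpha)$ when exactly one of $\phi,\psi$ is injective, and $(\alpha_{1},\alpha_{2})$ when neither is) kills $[(dw,0)]$ yet is not a zero divisor among even classes. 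You instead compute $H(\tilde D)$ once and for all via the Mayer--Vietoris checkerboard: $H^{\textrm{even}}(\tilde D)\cong R$, the fiber product ring, and $H^{\textrm{odd}}(\tilde D)\cong M=H^{*}(\B H_{0},\QQ)/(\im\phi+\im\psi)$ shifted, as an $R$-module via restriction to $\B H_{0}$; after that, nonvanishing of the odd part is literally the hypothesis, annihilation of $M$ by any $R$-element with zero restriction is immediate, and the non-zero-divisor property in $R$ follows coordinatewise because $R$ embeds in a product of polynomial domains. The structural facts you flag are standard, but if you want to avoid excision bookkeeping for a union of closed sets you can get your exact sequence purely algebraically from the short exact sequence of complexes
\begin{equation*}
0\to\tilde D\to \bigl(H^{*}(\B K^{-}_{0},\QQ)\otimes C\bigr)\oplus H^{*}(\B K^{+}_{0},\QQ)\xrightarrow{\Phi-\psi} H^{*}(\B H_{0},\QQ)\to 0
\end{equation*}
that defines $\tilde D$ (surjectivity coming from $\Phi$); its connecting homomorphism is visibly $\tilde D$-linear, which is exactly the module statement you need. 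Note also that your power trick $(\alpha^{\deg\beta},\beta^{\deg\alpha})$ repairs a small imprecision in the paper, whose element $(\alpha_{1},\alpha_{2})$ need not be homogeneous. What your approach buys is a transparent description of the full ring and module structure of $H(\tilde D)$, which makes both halves of the verification nearly automatic and exposes why the hypothesis $\im\phi+\im\psi\subsetneq H^{*}(\B H,\QQ)$ is decisive; what the paper's buys is self-containedness within the cdga model, with no appeal to the $H^{*}(X_{G})$-module structure of a long exact sequence.
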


\begin{proof}
First we show that there exists a nonzero element $[(\beta, 0)]$ of odd degree in $H(\tilde{D})$. By assumption,  let  $z\in H^{*}(\B H, \QQ)\setminus (\im \phi+ \im\psi)$. Let $\Phi$ be a surjective extension of $\phi$ as in Remark~\ref{R:Surjective_Trick}, defined on $H^*(\B K^-, \mathbb{Q})\otimes C$, where $C=\Lambda (V\oplus dV)$  is a contractible algebra and $V$ is concentrated in even degrees. By the construction of $\Phi$ (see \cite[Page~148]{RHT}), we can choose $\Phi$ and $V$ such that, without restriction, there is some $w\in V$ with the property that $\Phi(w)=z$. We show that $[dw, 0]\neq 0$. By contradiction, we suppose that $(dw, 0)=d(\xi, x)$, where $\Phi(\xi)=\psi(x)$.  Then $dw=d\xi$. There exist basis elements $x_1, \ldots, x_n$ of the free $\mathbb{Q}$-module $H(\B K^-, \mathbb{Q})$  and $c_i\in C$, $1\leq i\leq n$, such that $\xi=\sum_{i=1}^{n}  x_i\otimes c_i$. We have
\[
0\neq dw=d\xi=\sum_{i=1}^{n} x_i\otimes  dc_i.
\]
Therefore,  there is $J\subseteq \{1, \ldots, n\}$, such that $x_{j}=1$ for $j\in J$ and $dw=\sum_{j\in J}dc_{j}$, and $dc_{\gamma}=0$ for $\gamma\in\Gamma= \{1, \ldots, n\}\setminus J$ ($\Gamma$ might be an empty set).  Since $C$ is a contractible algebra, it follows that  either $c_{\gamma}=d c'_{\gamma}$ for some $c'_{\gamma}\in C$ or $c_{\gamma}\in \QQ$, for $\gamma\in \Gamma$.
Now write each $c_{i}\in \Lambda (V\oplus dV)$ in the form of $q_{i}+v_{i}+dv'_{i}+\eta_{i}$, where $q_{i}\in \QQ$, $v_{i}, v'_{i}\in V$, and $\eta_{i}\in \Lambda^{\geq 2}(V\oplus dV)$.  We have that
$$d w=\sum_{j\in J} (dv_{j}+d\eta_{j}).$$
Since $d w\in dV$ and $d\eta_{j}\in \Lambda^{\geq 2}(V\oplus dV)$, we conclude that $d\eta_{j}=0$, and, since $C$ is a contractible algebra, $\eta_{j}=d\eta'_{j}$.  Therefore,
$$d w=\sum_{j\in J} dv_{j}=d(\sum_{j\in J} v_{j}).$$  As a result, $w=\sum_{j\in J} v_{j}$ since $d\colon V\to dV$ is a bijection. Now we rewrite $\xi$ as follows:
\begin{align*}
\xi&=\sum_{j\in J} c_{j}+\sum_{\gamma'\in \Gamma'}  x_{\gamma'}\otimes 1+\sum_{\gamma\in \Gamma}  x_{\gamma}\otimes dc'_{\gamma}\\
&=w+\sum_{j\in J} d\eta'_{j}+\sum_{\gamma'\in \Gamma'}  x_{\gamma'}\otimes 1+\sum_{\gamma\in \Gamma}  x_{\gamma}\otimes dc'_{\gamma}.
\end{align*}
We obtain
\begin{align*}
\psi(x)&=\Phi(\xi)\\
&=\Phi(w+\sum_{j\in J} d\eta'_{j}+\sum_{\gamma'\in \Gamma'}  x_{\gamma'}\otimes 1+\sum_{\gamma\in \Gamma}  x_{\gamma}\otimes dc'_{\gamma})\\
&=z+\phi (\sum_{\gamma'\in \Gamma'}  x_{\gamma'})+0.
\end{align*}
Consequently, $$z=\psi(x)+\phi(-\sum_{\gamma'\in \Gamma'}  x_{\gamma'})\in \im\phi+\im\psi,$$
which yields a contradiction by the choice of $z$.

 By assumption on the rank and without restriction,  we have two cases: $\rank~H=\rank K^{-}$ and $\rank H<\rank K^{+}$, or $\rank H<\rank K^{-}\leq\rank K^{+}$. First assume that  $\rank~H=\rank K^{-}$ and $\rank H<\rank K^{+}$. Then by Lemma~\ref{L:Zero-divisor_in_even_degree_1} there exists $0\neq\alpha\in H^{*}(\B K^{+}, \mathbb{Q})$ such that $\psi(\alpha)=0$. Therefore, $0\neq[(0, \alpha)]\in H(\tilde{D})$.  Observe that $[(0, \alpha)]$ is a zero divisor in $H(\tilde{D})$, for $[(0, \alpha)].[(dw, 0)]=0$. We show that $[(0, \alpha)]$ is not a zero-divisor in $H^{\textrm{even}}(\tilde{D})$. Let there exist $[(a,b)]\in H^{\textrm{even}}(\tilde{D})$ such that $(a, b)(0,\alpha)=d(x, y)=(dx, 0)$. This implies that $b\alpha=0$. Since $H(\B K^{+}, \mathbb{Q})$ is a polynomial
algebra, it does not have a zero-divisor. Thus $b=0$.  Therefore, $\Phi(a)=0$. If $a\in H^{*}(\B K^{-}, \mathbb{Q})$, since by Lemma~\ref{L:Equal_rank_injectivity},  $\phi$ is injective, $a=0$. Assume now that $a\in H(\B K^{-}, \mathbb{Q})\otimes C$. Then $a=\sum x_{i}\otimes c_{i}$, where $x_{i}$'s are elements of a basis of the $\QQ$-module $H^{*}(\B K^{-}, \mathbb{Q})$ and $c_{i}$'s are of even degrees. Since $(a, 0)$ is a cocycle, we have $\sum x_{i}\otimes dc_{i}=0$. Since $x_{i}$'s are basis elements, $dc_{i}=0$. Then either $c_{i}\in \QQ$ or $c_{i}=d c'_{i}$ as $C$ is a contractible algebra, where $c'_{i}$'s have odd degrees, whence,  $a=\sum x_{i}\otimes 1+\sum x_{j}\otimes dc'_{j}$. As a result,
$0=\Phi(a)=\phi(\sum x_{i}\otimes 1)+0$ which implies that $\sum x_{i}\otimes 1=0$, for $\phi$ is injective. Moreover, because $c'_{i}$'s have odd degrees,  it gives that $(\sum x_{j}\otimes c'_{j}, 0)\in \tilde{D}$ and thus $(a, 0)=d(\sum x_{j}\otimes c'_{j}, 0)$.

Now let $\rank H<\rank K^{-}\leq\rank K^{+}$.  Then  by Lemma~\ref{L:Zero-divisor_in_even_degree_1}, there exist $0\neq\alpha_{1}\in H^*(\B K^-, \mathbb{Q})$ and $0 \neq\alpha_{2}\in H^*(\B K^+, \mathbb{Q})$ such that
$0\neq[(\alpha_{1}, 0)], 0\neq[(0, \alpha_{2})]\in H(\tilde{D})$. We claim that $[(\alpha_{1}, \alpha_{2})]$ is a zero divisor in $H(\tilde{D})$ but not in $H^{\textrm{even}}(\tilde{D})$.  Note that $[(\alpha_{1}, \alpha_{2})].[(dw, 0)]=0$, for  by $(\alpha_{1}w, 0)$ is well-defined and hence $(\alpha_{1}dw, 0)$ is exact. Now we show that $[(\alpha_{1}, \alpha_{2})]$ is not a zero divisor in $H^{\textrm{even}}(\tilde{D})$. Assume  that there exists $[(a, b)]\in H^{\textrm{even}}(\tilde{D})$ such that $(a, b)(\alpha_{1},\alpha_{2})=d(x, y)=(dx, 0)$.  From $b\alpha_{2}=0$ we deduce that $b=0$.

If $a\notin H^{*}(\B K^-, \QQ)$, but $a\in H^{*}(\B K^-, \QQ)\otimes C$, then as before, $[(a, 0)]=0$. Let $a\in H^{*}(\B K^-, \QQ)$. Thus $a\alpha_{1}\in H^{*}(\B K^-, \QQ)$.  Since $a\alpha_{1}=dx$, we conclude that $a\alpha_{1}=0$ and hence $a=0$.
\end{proof}

\begin{lem}\label{L:Reg_Seq_H_is_Reg_Seq_Heven}
Suppose that  $R=H(\tilde{D})$ and $S=H^{\textrm{even}}(\tilde{D})$.
Then
\begin{itemize}
\item[(1)] Every homogeneous regular sequence of $R$ is a regular sequence of $S$.

\item[(2)] If $R$ is Cohen--Macaulay, so is $S$.
\end{itemize}
\end{lem}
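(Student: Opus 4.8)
The plan is to exploit the parity grading. Write $R=H(\tilde{D})$ and observe that, since $\tilde{D}$ is a commutative cochain algebra over $\QQ$, its cohomology is graded-commutative; hence every homogeneous element of odd degree squares to zero. I would decompose $R=S\oplus O$ as $S$-modules, where $S=H^{\textrm{even}}(\tilde{D})$ is a subring and $O=H^{\textrm{odd}}(\tilde{D})$ satisfies $O\cdot O\subseteq S$. Two structural facts are worth recording for later: $R^0=S^0=\QQ$ (the Borel construction is connected), and $R$ is a finite module over $H^*(\B G,\QQ)$ by the proof of Lemma~\ref{L: Dim_of_Ring_Instead_of_Module}; since $H^*(\B G,\QQ)$ is a polynomial algebra concentrated in even degrees it lies in $S$, so $R$ is a finite $S$-module and $S$ is a Noetherian $\prescript{*}{}{\text{local}}$ ring with residue field $\QQ$.

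For (1), I would first show that the members of a homogeneous regular sequence $f_1,\dots,f_n$ are automatically of even degree, hence lie in $S$. Indeed, if $f_i$ were the first odd-degree member, then in $\bar{R}=R/(f_1,\dots,f_{i-1})R$ its image $\bar{f}_i$ is a non-zero-divisor and is nonzero (the quotient is nonzero because the $f_j$ have positive degree and $R^0=\QQ$); but $\bar{f}_i^2=0$ by graded-commutativity, so $\bar{f}_i$ would be a zero-divisor, a contradiction. With $f_1,\dots,f_n\in S$ in hand, I would use that each $f_j\in S$ acts diagonally on $R=S\oplus O$, so that $(f_1,\dots,f_{i-1})R=(f_1,\dots,f_{i-1})S\oplus(f_1,\dots,f_{i-1})O$ and $R/(f_1,\dots,f_{i-1})R\cong S/(\cdots)S\oplus O/(\cdots)O$ as $S$-modules. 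Multiplication by $f_i$ is injective on this direct sum precisely when it is injective on each summand, so $f_i$ is a non-zero-divisor on $S/(f_1,\dots,f_{i-1})S$; together with $S/(f_1,\dots,f_n)S\neq 0$ (again since $S^0=\QQ$ and the $f_j$ have positive degree) this shows that $f_1,\dots,f_n$ is an $S$-regular sequence.

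For (2), I would feed part (1) into a dimension count. A maximal homogeneous $R$-regular sequence in the $\prescript{*}{}{\text{maximal}}$ ideal realizes $\depth R$ and, by (1), is an $S$-regular sequence in the $\prescript{*}{}{\text{maximal}}$ ideal of $S$; hence $\depth S\geq \depth R$. On the other hand, since $R$ is a finite $S$-module and $\Ann_S R=0$ (as $1\in R$), Lemma~\ref{L: Changing_the_rings} gives $\dim S=\dim_S R=\dim R$. Combining these with the general inequality $\depth S\leq\dim S$ yields the chain $\depth R\leq \depth S\leq \dim S=\dim R$. If $R$ is Cohen--Macaulay then $\depth R=\dim R$, which forces equality throughout; in particular $\depth S=\dim S$, i.e.\ $S$ is Cohen--Macaulay.

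The only genuinely delicate points are the parity argument and the bookkeeping in the direct-sum decomposition: one must be careful that the notion of regular sequence includes the nonzero-quotient clause and verify that this clause survives restriction to the summand $S$, which it does precisely because $S$ is connected with $S^0=\QQ$ and the sequence has positive degree. Everything else is a formal consequence of the change-of-rings Lemma~\ref{L: Changing_the_rings} and the standard inequality $\depth\leq\dim$ for finitely generated modules over Noetherian $\prescript{*}{}{\text{local}}$ rings.
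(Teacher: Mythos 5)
Your proof is correct. Part (1) is essentially the paper's own argument: the nilpotence-of-odd-elements observation forcing the sequence into $S$, followed by the splitting of the ideal along $R=S\oplus O$, is exactly the paper's even/odd coefficient-splitting computation phrased module-theoretically (you are, if anything, more careful about the nonzero-quotient clause). Part (2), however, takes a genuinely different route. The paper chooses a homogeneous system of parameters of $R$ (a regular sequence because $R$ is Cohen--Macaulay), pushes it into $S$ by Part (1) to get $\depth S\geq \Kdim R$, and then bounds $\Kdim S\leq \Kdim R$ via the transcendence-degree characterization of Krull dimension (Proposition \ref{P:trdeg}), using that $S$ and $R$ are affine $\QQ$-algebras. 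You instead obtain the equality $\Kdim S=\Kdim R$ by exhibiting $R$ as a finite $S$-module---routed through the finiteness of $R$ over $H^*(\B G,\QQ)$, whose image lies in $S$ for degree reasons---and then applying the change-of-rings Lemma~\ref{L: Changing_the_rings} together with $\Ann_S R=0$. Your route buys two things: it shows \emph{unconditionally} that $R$ is finite, hence integral, over $S$ (the paper derives the integral extension only in Corollary~\ref{C:Integral_Extension}, and only under the Cohen--Macaulay hypothesis), and it effectively supplies the Noetherianity and affineness of $S$ that the paper's transcendence-degree step quietly assumes (indeed $S$ is an $H^*(\B G,\QQ)$-submodule of the finite module $R$, hence itself a finite module over the Noetherian ring $H^*(\B G,\QQ)$). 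The debts you incur are standard and of the same order as the paper's: you need that $\depth R$ is realized by a homogeneous regular sequence in the graded $\prescript{*}{}{\text{local}}$ setting, and the one-line Noetherianity argument for $S$ just indicated, where the paper instead needs that every homogeneous system of parameters of a graded Cohen--Macaulay ring is a regular sequence, and that $S$ is a finitely generated $\QQ$-algebra.
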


\begin{proof}
\begin{itemize}
\item[(1)]
Let $(x_1, \ldots, x_n)$ be a homogeneous regular sequence in $R$. First note that since odd-degree elements are nilpotent, $\deg~x_i$ is even, for $i=1, \ldots, n$. Therefore, $(x_1, \ldots, x_n)\in S$. Let
$$x_i+\langle x_1,\ldots, x_{i-1}\rangle_S\in \frac{S}{\langle x_1,\ldots, x_{i-1}\rangle_S}, $$
where $\langle x_1,\ldots, x_{i-1}\rangle_S$ means an ideal generated by $x_1,\ldots, x_{i-1}$ in $S$.
If  there exists $a\in S$ such that
$$x_ia\in \langle x_1,\ldots, x_{i-1}\rangle_S\subseteq  \langle x_1,\ldots, x_{i-1}\rangle_R,$$
then since $(x_1, \ldots, x_n)$ is a  regular sequence in $R$, we have that $a\in \langle x_1,\ldots, x_{i-1}\rangle_R$. Let
 $a=\sum \lambda_jx_j$,
for $\lambda_j\in R$. Then
$$a=\sum_{\deg \lambda_{j}\,\text{odd}} \lambda^{odd}_jx_j+\sum_{\deg \lambda_{j}\,\textrm{even}}\lambda^{\textrm{even}}_jx_j.$$
Since $\deg~a$ is even, we have that $\sum \lambda^{odd}_jx_j=0$. This  implies that
 $$a=\sum_{\deg \lambda_{j}\,\textrm{even}}\lambda^{\textrm{even}}_jx_j\in \langle x_1,\ldots, x_{i-1}\rangle_S.$$
 Consequently, $(x_1, \ldots, x_n)$ is a regular sequence in $S$.

 \item[(2)] Let $x_1, \ldots, x_n$ be a homogeneous system of parameters, which by Proposition~\ref{P:Integral_Extension_of_Polynomial_Ring} always exists. Since $R$ is Cohen--Macaulay, every homogeneous system of parameters is a regular sequence. Whence $(x_1, \ldots, x_n)$ is a maximal regular sequence in   $\mathfrak{m}_{R}=\sum_{i\geq 1} R^{i}$.
 By Part (1), we have that $(x_1, \ldots, x_n)$ is a regular sequence in $\mathfrak{m}_{S}$. Thus $n\leq \depth~ S\leq \Kdim~S$.  To show that $\Kdim~S\leq n$, notice that $S$ and $R$ are finitely generated $\QQ$-algebras. Therefore, by Proposition~\ref{P:trdeg},
 $$\Kdim~S=\trdeg~S\leq \trdeg~R =\Kdim~R=n.$$
  \end{itemize}
\end{proof}

\begin{cor}\label{C:Integral_Extension}
Suppose that  $R=H(\tilde{D})$ and $S=H^{\textrm{even}}(\tilde{D})$.
Then $R$ is an integral extension of $S$ if $R$ is Cohen--Macaulay.
\end{cor}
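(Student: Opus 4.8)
The plan is to establish the stronger, essentially unconditional fact that every element of $R=H(\tilde D)$ satisfies a monic quadratic over $S=H^{\textrm{even}}(\tilde D)$; the Cohen--Macaulay hypothesis then only serves to place the statement within the framework of Lemma~\ref{L:Reg_Seq_H_is_Reg_Seq_Heven}. The two structural inputs I would isolate first are consequences of $R$ being a graded-commutative $\QQ$-algebra: every even-degree element of $R$ is central, and every odd-degree homogeneous element $u$ satisfies $u^2=-u^2$, hence $u^2=0$ since we work over $\QQ$. From the latter it follows that an arbitrary odd element $b=\sum_i b_i$ (a finite sum of odd-degree homogeneous pieces) is already square-zero: expanding $b^2=\sum_i b_i^2+\sum_{i\neq j}(b_ib_j+b_jb_i)$, each diagonal term vanishes and each off-diagonal pair cancels by anticommutativity, so $b^2=0$.

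With these facts in hand the integrality is immediate. Given any $\alpha\in R$, decompose it along the grading as $\alpha=a+b$, where $a$ is the even component of $\alpha$ and $b$ its odd component. By definition $a\in S$, and $b^2=0$ by the previous step. Since $a$ is central we may compute $(\alpha-a)^2=\alpha^2-2a\alpha+a^2$, and this equals $b^2=0$. Hence $\alpha$ is a root of the monic polynomial $p(x)=(x-a)^2=x^2-2ax+a^2\in S[x]$, so $\alpha$ is integral over $S$. As $\alpha$ was arbitrary, $R$ is an integral extension of $S$.

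I expect the only delicate point to be the bookkeeping forced by the anticommutativity of odd classes: it is precisely the centrality of the even part $a$ that lets $(\alpha-a)^2$ be rewritten as an honest quadratic in $\alpha$ with coefficients in $S$, and the square-zero property of the odd part that makes that quadratic vanish. If one instead prefers an argument in the spirit of the surrounding results, where the Cohen--Macaulay hypothesis enters substantively, one can take a homogeneous system of parameters $x_1,\ldots,x_n$ for $R$, which exists by Proposition~\ref{P:Integral_Extension_of_Polynomial_Ring} and which must lie in $S$ since the nilpotent odd classes cannot occur among parameters of a polynomial subring; then $R$ is integral over $\QQ[x_1,\ldots,x_n]$ by that same proposition, and a fortiori over the intermediate ring $\QQ[x_1,\ldots,x_n]\subseteq S\subseteq R$, the Cohen--Macaulayness guaranteeing via Lemma~\ref{L:Reg_Seq_H_is_Reg_Seq_Heven} that this parameter system is a regular sequence and that $\Kdim S=\Kdim R=n$.
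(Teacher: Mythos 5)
Your proposal is correct, and your primary argument is genuinely different from---and stronger than---the paper's proof. The paper stays inside its regular-sequence framework: since $R$ is Cohen--Macaulay, a maximal regular sequence $(x_1,\ldots,x_n)$ of $R$ is a homogeneous system of parameters, it is also a (maximal) regular sequence of $S$ by Lemma~\ref{L:Reg_Seq_H_is_Reg_Seq_Heven}, and Proposition~\ref{P:Integral_Extension_of_Polynomial_Ring} then exhibits both $R$ and $S$ as integral extensions of $\QQ[x_1,\ldots,x_n]$; from $\QQ[x_1,\ldots,x_n]\subseteq S\subseteq R$ it follows that $R$ is a fortiori integral over $S$. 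This is essentially your fallback paragraph. Your main argument instead uses only that $H(\tilde{D})$ is graded-commutative over $\QQ$: odd homogeneous classes square to zero, hence so does the entire odd component $b$ of any $\alpha=a+b$, and therefore $\alpha$ is a root of the monic quadratic $(x-a)^2=x^2-2ax+a^2\in S[x]$, the centrality of the even element $a$ justifying the expansion. This buys two things. First, integrality of $R$ over $S$ holds unconditionally, so the Cohen--Macaulay hypothesis in Corollary~\ref{C:Integral_Extension} is revealed to be redundant; it matters only downstream, where the corollary's conclusion is combined with Lemma~\ref{L:Reg_Seq_Heven_is_Reg_Seq_H} in the proof of Theorem~\ref{T:Main_THM_A}, whose hypotheses genuinely need Cohen--Macaulayness and equal Krull dimensions. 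Second, your computation never applies results stated for commutative rings (Proposition~\ref{P:Integral_Extension_of_Polynomial_Ring} is quoted from Bruns--Herzog and Kemper) to the merely graded-commutative ring $R$, a point the paper passes over silently and which your route avoids entirely. What the paper's approach offers in exchange is coherence with the adjacent lemmata and the explicit common polynomial subring over which both $R$ and $S$ are integral, but as a proof of the stated corollary it establishes nothing beyond what your direct two-line calculation gives.
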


\begin{proof}
Let $(x_1, \ldots, x_n)$ be a maximal regular sequence in $R$.  Hence by Lemma~\ref{L:Reg_Seq_H_is_Reg_Seq_Heven}, Part(2), it is a maximal regular sequence in $S$ as well. Then by Proposition~\ref{P:Integral_Extension_of_Polynomial_Ring}, $R$ and $S$ are integral extensions of $\QQ[x_{1},\ldots, x_{n}]$. Since $$\QQ[x_{1},\ldots, x_{n}]\subseteq S\subseteq R,$$
$R$ is in particular an integral extension of $S$.
\end{proof}

\begin{lem}\label{L:Reg_Seq_Heven_is_Reg_Seq_H}
Let $S$ be a subring of $R$ with $\Kdim~S=\Kdim~R=n$. Assume further that $R$ and $S$ are both Cohen--Macaulay and $R$ is an integral extension of $S$. Then any maximal homogeneous regular sequence of $S$ is a regular sequence of $R$.
\end{lem}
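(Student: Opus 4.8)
The plan is to apply Proposition~\ref{P:Integral_Extension_of_Polynomial_Ring} twice---once to recognize the given regular sequence of $S$ as a homogeneous system of parameters over whose generated polynomial subring $S$ is integral, and once to transport this conclusion up to $R$---with transitivity of integral extensions serving as the bridge between the two applications.

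First I would fix a maximal homogeneous regular sequence $(x_1,\ldots,x_m)$ of $S$ and show that $m=n$ and that it is a homogeneous system of parameters for $S$. Its length equals $\depth S$ by the very definition of depth as the length of a maximal regular sequence in $\mathfrak{m}_S$, and since $S$ is Cohen--Macaulay of dimension $n$ we have $\depth S=\Kdim S=n$, so $m=n$. Each $x_i$ is a homogeneous nonzerodivisor of positive degree on $S/\langle x_1,\ldots,x_{i-1}\rangle$, and such an element drops the Krull dimension by exactly one; iterating from $\Kdim S=n$ yields $\Kdim S/\langle x_1,\ldots,x_n\rangle=0$, so $(x_1,\ldots,x_n)$ is a homogeneous system of parameters for $S$. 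By Proposition~\ref{P:Integral_Extension_of_Polynomial_Ring} it follows that $S$ is an integral extension of $\QQ[x_1,\ldots,x_n]$.

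Next, since $R$ is by hypothesis an integral extension of $S$ while $S$ is an integral extension of $\QQ[x_1,\ldots,x_n]$, transitivity of integral extensions shows that $R$ is an integral extension of $\QQ[x_1,\ldots,x_n]$ (note $x_1,\ldots,x_n\in S\subseteq R$, so the polynomial subring is the same inside $R$). As $\Kdim R=n$, a second application of Proposition~\ref{P:Integral_Extension_of_Polynomial_Ring}, now to the affine $\QQ$-algebra $R$, shows that $x_1,\ldots,x_n$ is a homogeneous system of parameters for $R$. Finally, since $R$ is Cohen--Macaulay, every homogeneous system of parameters is a regular sequence, and hence $(x_1,\ldots,x_n)$ is a regular sequence in $R$, as desired. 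The one point that requires care---and thus the main obstacle---is the first step: verifying that the maximal homogeneous regular sequence has length exactly $n$ and cuts the Krull dimension down to zero, i.e.~the interplay between maximality of the regular sequence, the equality $\depth S=\Kdim S$, and the dimension drop along homogeneous nonzerodivisors. Once the sequence is identified as a system of parameters, the remaining implications are purely formal.
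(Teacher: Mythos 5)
Your proof is correct, and it takes a route that differs from the paper's in the key middle step. Both arguments begin the same way: since $S$ is Cohen--Macaulay with $\Kdim S=n$, a maximal homogeneous regular sequence $(x_1,\ldots,x_n)$ has length $n$ and is a homogeneous system of parameters, i.e.\ $\Kdim S/\langle x_1,\ldots,x_n\rangle_S=0$; and both end the same way, converting ``system of parameters of $R$'' into ``regular sequence of $R$'' via Cohen--Macaulayness of $R$. The difference is how integrality transports the system-of-parameters property from $S$ up to $R$. The paper does this in one stroke with the dimension formula for integral extensions \cite[Corollary~A.8]{BrHe}, namely $\Kdim R/\langle x_1,\ldots,x_n\rangle_R = \Kdim S/\bigl(\langle x_1,\ldots,x_n\rangle_R\cap S\bigr)=0$, so the sequence is a system of parameters of $R$. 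You instead factor through the polynomial subring: Proposition~\ref{P:Integral_Extension_of_Polynomial_Ring} makes $S$ integral over $\QQ[x_1,\ldots,x_n]$, transitivity of integral dependence makes $R$ integral over $\QQ[x_1,\ldots,x_n]$, and the converse direction of the same proposition (applicable because $\Kdim R=n$) shows that $(x_1,\ldots,x_n)$ is a homogeneous system of parameters of $R$. What your route buys is self-containedness: it uses only a proposition the paper has already stated, and it runs exactly parallel to the paper's own proof of Corollary~\ref{C:Integral_Extension}; the paper's proof is shorter but leans on an additional external citation. One caveat common to both proofs: the lemma is stated for bare rings, but the arguments (yours via Proposition~\ref{P:Integral_Extension_of_Polynomial_Ring}, the paper's via the notion of system of parameters and ``Cohen--Macaulay implies every homogeneous system of parameters is regular'') implicitly use that $R$ and $S$ are non--negatively graded affine $\QQ$-algebras, as they are in the intended application $R=H(\tilde{D})$, $S=H^{\textrm{even}}(\tilde{D})$; so this is a shared implicit hypothesis, not a defect of your argument relative to the paper's.
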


\begin{proof}
Let $(x_1, \ldots, x_n)$ be a maximal homogeneous regular sequence of $S$. Then by \cite[Corollary~A.8, Page~415]{BrHe}, we have

\begin{align*}
\Kdim~\frac{R}{\langle x_1,\ldots, x_{n}\rangle_R}&=\Kdim~\frac{S}{\langle x_1,\ldots, x_{n}\rangle_R\cap S}\\
&=\Kdim~\frac{S}{\langle x_1,\ldots, x_{n}\rangle_S}=0.
\end{align*}
As a result, $(x_1, \ldots, x_n)$ is a system of parameters in $R$ and since $R$ is Cohen--Macaulay, $(x_1, \ldots, x_n)$ is a regular sequence in $R$.
\end{proof}

Now we  prove Theorem~\ref{T:Main_THM_A}.

\begin{proof}[Proof of Theorem~\ref{T:Main_THM_A}]
We first prove the ``if'' part.  We should mention that the idea for the proof of this part is basically a modification to the setting of generalized cohomogeneity one spaces of previous work in \cite{GoRo} and \cite{GoMa14}.
Let $\rank H=\rank K^-=\rank K^+$. Then one can use \cite[Corollary~4.3]{GoRo} (the proof passes verbatim to our setting) to show that $X$ is a Cohen--Macaulay space.

Now assume the situation of  Part (2).
 Let $r= \max \{\rank K^-, \rank K^+\}=\rank~K^{-}$, without restriction.  Then $\rank~H=r-1$ by assumption.  Two cases may occur: either $\rank~H=\rank~K^+$, or $\rank~H<\rank~K^+$. If $\rank~H=\rank~K^+$, then by Lemma~\ref{L:Equal_rank_injectivity},  $H^*(\B\iota^{+})$
  is injective and $K^+/H$ has rational cohomology concentrated in even degrees (see Proposition \ref{P:Cohomology_Homogeneous_Sp}). 
 Hence, a similar argument as in the proof of Theorem  1.1 in  \cite{GoMa14} shows that $X$ is Cohen--Macaulay. If $\rank~H<\rank~K^+$,  then  $\rank K^-=\rank K^+$.  Again a similar argument as in the proof of Theorem  1.1 in \cite{GoMa14} gives the result.

Now we prove the ``only if'' part. Let the equivariant cohomology of  $X$ be Cohen--Macaulay and $\rank H< \max \{\rank K^-, \rank K^+\}$. By contradiction, assume that
  $$\im H^{*}(\B\iota^-)+\im H^{*}(\B\iota^+)\subsetneq H^*(\B H, \mathbb{Q}).$$
 Then by Lemma \ref{L:Zero-divisor_in_even_degree_2}, $H(\tilde{D})$ has a zero divisor $\alpha$  of even degree which is not a zero divisor in $H^{\textrm{even}}(\tilde{D})$. We extend $\alpha$ to a maximal regular sequence in  $H^{\textrm{even}}(\tilde{D})$. However, since $X$ is Cohen--Macaulay,  by Lemma~\ref{L:Reg_Seq_Heven_is_Reg_Seq_H}, this regular sequence  is a regular sequence of  $H(\tilde{D})$ as well. This yields a contradiction since $\alpha$ is a zero divisor in $H(\tilde{D})$.
\end{proof}

Now we proceed  with the proof of Theorem~\ref{T:Main_THM_B}. First we need the following proposition

\begin{prop}\label{P:Join}
Let $G_1/H_1\ast G_2/H_2$ be a join of PCHSs of compact Lie groups such that $G_1$ and $G_2$ are connected and $\B H_1$, $\B H_2$ are Sullivan spaces. Suppose that neither $H^*(\B G_1)\to H^*(\B H_1)$ nor $H^*(\B G_2)\to H^*(\B H_2)$ is surjective. Then $H^*_{G_1\times G_2}(G_1/H_1\ast G_2/H_2,\QQ)$ is not Cohen--Macaulay.
\end{prop}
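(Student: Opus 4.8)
The plan is to view the join as a cohomogeneity one space and to feed its group diagram into Theorem~\ref{T:Main_THM_A}. By Proposition~\ref{P:Join_Action} the join action of $G=G_1\times G_2$ on $G_1/H_1\ast G_2/H_2$ carries the group diagram $(G_1\times G_2,\,H_1\times H_2,\,G_1\times H_2,\,H_1\times G_2)$, so with $H=H_1\times H_2$, $K^-=G_1\times H_2$ and $K^+=H_1\times G_2$ the singular normal fibres are $K^-/H\cong G_1/H_1$ and $K^+/H\cong G_2/H_2$. I would first verify the hypotheses of Theorem~\ref{T:Main_THM_A}. The fibres are connected PCHSs of positive dimension: if $G_i=H_i$ the restriction $H^*(\B G_i,\QQ)\to H^*(\B H_i,\QQ)$ would be the identity, hence surjective, contrary to assumption. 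The rank bound $\max\{\rank K^-,\rank K^+\}\le\rank H+1$ holds because $\rank K^\pm-\rank H=\rank G_i-\rank H_i\le 1$ for a positively curved homogeneous space. Finally, the $\B G_i$ are simply connected as $G_i$ is connected, the $\B H_i$ are Sullivan by hypothesis, and one checks that the products $\B K^-=\B G_1\times\B H_2$, $\B K^+=\B H_1\times\B G_2$ and $\B H=\B H_1\times\B H_2$ are again Sullivan spaces.

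By Theorem~\ref{T:Main_THM_A} the action is \emph{not} Cohen--Macaulay precisely when both of its conditions fail, i.e. when $\rank H<\max\{\rank K^-,\rank K^+\}$ and $\im H^*(\B\iota^-)+\im H^*(\B\iota^+)\subsetneq H^*(\B H,\QQ)$. The core of the proof is a Künneth computation establishing the second of these. Using Corollary~\ref{C_Sullivan_Model_of_BH} to replace each group by its identity component and the Künneth isomorphism for products of classifying spaces, I would write $H^*(\B H,\QQ)\cong A_1\otimes A_2$ with $A_i:=H^*(\B(H_i)_0,\QQ)$, and identify the two maps as $H^*(\B\iota^-)=f_1\otimes\mathrm{id}$ and $H^*(\B\iota^+)=\mathrm{id}\otimes f_2$, where $f_i\colon H^*(\B G_i,\QQ)\to A_i$ is restriction. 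Hence
\[
\im H^*(\B\iota^-)=\im f_1\otimes A_2,\qquad \im H^*(\B\iota^+)=A_1\otimes\im f_2 .
\]
Choosing graded complements $A_i=\im f_i\oplus C_i$ and expanding $A_1\otimes A_2$ into its four tensor summands, the subspace $\im H^*(\B\iota^-)+\im H^*(\B\iota^+)$ is the sum of all summands except $C_1\otimes C_2$; therefore it equals $H^*(\B H,\QQ)$ if and only if $C_1=0$ or $C_2=0$, that is, if and only if $f_1$ or $f_2$ is surjective. By hypothesis neither is (equivalently, by Lemma~\ref{L:rational_sphere_surjectivity}, neither $G_i/H_i$ is rationally an odd-dimensional sphere), so this image condition fails.

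It remains to rule out the first condition of Theorem~\ref{T:Main_THM_A}. Since $H\subseteq K^\pm$ we always have $\rank H\le\rank K^\pm$, so $\rank H=\rank K^-=\rank K^+$ fails exactly when at least one fibre $G_i/H_i$ is odd-dimensional (equivalently $\rank G_i>\rank H_i$); this is the regime in which the proposition is invoked, in particular in the non-Cohen--Macaulay constructions of Theorem~\ref{T:Main_THM_B}. Under this condition both alternatives of Theorem~\ref{T:Main_THM_A} fail, and we conclude that $H^*_{G_1\times G_2}(G_1/H_1\ast G_2/H_2,\QQ)$ is not Cohen--Macaulay. I expect the main obstacle to be the image computation of the previous paragraph: one must check that the reduction to identity components via Corollary~\ref{C_Sullivan_Model_of_BH} is compatible both with the tensor splitting of $H^*(\B H,\QQ)$ and with the two restriction maps, after which the clean criterion ``the sum of the two images is proper if and only if some tensor factor is proper'' is exactly what converts the non-surjectivity hypothesis into failure of the Cohen--Macaulay property.
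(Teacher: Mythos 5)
Your overall route is the paper's: realize the join as a generalized cohomogeneity one space with diagram $(G_1\times G_2,\,H_1\times H_2,\,G_1\times H_2,\,H_1\times G_2)$, verify the hypotheses of Theorem~\ref{T:Main_THM_A}, and show that $\im H^*(\B\iota^{-})+\im H^*(\B\iota^{+})$ is a proper subspace of $H^{*}(\B H, \QQ)$. You carry out that key step differently, though. The paper chooses generators $t_\beta\notin\im\phi^-$ and $s_\alpha\notin\im\phi^+$ of minimal degree and shows by a word-length argument on monomials that $s_\alpha t_\beta$ cannot be written as $\phi^-(z^-)+\phi^+(z^+)$. Your K\"unneth splitting --- $H^*(\B H,\QQ)\cong A_1\otimes A_2$ with the two images equal to $\im f_1\otimes A_2$ and $A_1\otimes \im f_2$, whose sum misses exactly $C_1\otimes C_2$ for graded complements $C_i$ of $\im f_i$ --- is correct, cleaner, and yields the sharper characterization that the sum is everything if and only if some $f_i$ is surjective. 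The compatibility with identity components that you flag as the main obstacle is harmless: $(K^-)_0=G_1\times(H_2)_0$, so by naturality of the K\"unneth isomorphism the map induced by $\iota^-_0$ is again of the form $f_1\otimes\mathrm{id}$.

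Where your proof hedges --- ruling out alternative (1) of Theorem~\ref{T:Main_THM_A} --- you have in fact put your finger on a genuine gap, but it is a gap in the paper, not in your argument. The paper's proof establishes only the non-surjectivity of the sum and never addresses the equal-rank alternative; and that alternative cannot be excluded from the stated hypotheses. Concretely, take $G_1/H_1=G_2/H_2=\SO(3)/\SO(2)=S^2$: all Sullivan hypotheses hold, and neither map $H^*(\B\SO(3),\QQ)=\QQ[p_1]\to \QQ[e]=H^*(\B\SO(2),\QQ)$, $p_1\mapsto e^2$, is surjective, yet $S^2\ast S^2=S^5$ with the join action has all isotropy ranks equal to $2$, hence is Cohen--Macaulay by alternative (1) of Theorem~\ref{T:Main_THM_A} (it is even a smooth cohomogeneity one manifold, cf.~\cite{GoMa14}). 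So the proposition is false as literally stated; the hypothesis you add --- at least one $G_i/H_i$ odd-dimensional, equivalently $\rank H<\max\{\rank K^-,\rank K^+\}$ --- is exactly what is needed, and it does hold in the proposition's only application (the proof of Theorem~\ref{T:Main_THM_B}, where at most one of $F^{\pm}$ is even-dimensional). With that hypothesis made explicit, your proof is complete.
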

\begin{proof}
Denote by $\QQ[t_1,\ldots, t_k]$ a Sullivan model of $H^*(\B H_1,\QQ)$, by $\QQ[s_1,\ldots, s_l]$ a Sullivan model of  $H^*(\B H_2,\QQ)$, by $\QQ[x_1,\ldots, x_n]$ a Sullivan model of $H^*(\B G_1,\QQ)$, by $\QQ[y_1,\ldots, y_m]$ a Sullivan model of $H^*(\B G_2,\QQ)$. It follows that the relevant algebraic double mapping cylinder $\tilde D$ is the one of
\begin{align*}
\QQ[x_1,\ldots, x_n,s_1,\ldots, s_l]\xrightarrow{\phi^-} \QQ[t_1,\ldots, t_k,s_1,\ldots, s_l] \xleftarrow{\phi^+} \QQ[t_1,\ldots, t_k,y_1,\ldots, y_m]
\end{align*}
with the obvious induced morphisms and identities respecting the product structures.

In view of Theorem \ref{T:Main_THM_A} we show that $\phi^-+\phi^+$ is not surjective. Since neither side is surjective, we observe that there is some generators $s_\alpha$ and  $t_\beta$ such that $t_\beta\not \in \phi^-(\QQ[x_1,\ldots, x_n])$ and $s_\alpha\not \in \phi^+(\QQ[y_1,\ldots, y_m])$. Let $s_\alpha$, $t_\beta$ be of minimal degree with this property.

We aim to prove that $s_\alpha t_\beta\not \in \im \phi^- + \im \phi^+$. Assume the contrary, namely that there is an element $(z^-,z^+)\in \QQ[x_1,\ldots, x_n,s_1,\ldots, s_l] \oplus \QQ[t_1,\ldots, t_k,y_1,\ldots, y_m]$ with $\phi^-(z^-) + \phi^+(z^+)=s_\alpha t_\beta$. By definition of $\phi^\pm$ we have that $\phi^-(x_j)$ is a polynomial in the $t_i$ for $1\leq i\leq n$, (and $\phi^+(y_j)$ is a polynomial in the $s_i$ for $1\leq i\leq m$). We define the word-length of a polynomial in $\QQ[t_1,\ldots, t_k,s_1,\ldots, s_l]$ (respectively in $\QQ[t_1,\ldots, t_k]$, $\QQ[s_1,\ldots, s_l]$)  as the minimum word-length of any of its non-trivial monomials--we cancel monomials as far as possible. Due to multiplicativity, without restriction, $\phi^-(z^-)$ has word-length at most two (in the $t_i$, $s_i$) and actually word-length one in the $t_i$. Since $\phi^-$ is the identity on the $s_i$, $1 \leq i\leq m$, and since $\phi^-(x_i)\in \QQ[t_1,\ldots, t_n]$, we derive that this is only possible if there is, without restriction, an $x_i$ with $\phi^-(x_i)=t_\alpha + \gamma$ (with $\gamma\in \QQ[t_1,\ldots, t_n]$ of word-length at least $2$ in the $t_i$).
As $t_\alpha$ was chosen of minimal degree, i.e.~since $\phi^-$ surjects onto all $t_i$ smaller than $\deg t_\alpha$, it follows that $\gamma$ lies in $\im (\phi^-)$ (since $\phi^-$ is a morphism of rings). We deduce that also $t_\alpha\in \im (\phi^-)$ contradicting our original assumption.
\end{proof}

\begin{proof} [Proof of Theorem~\ref{T:Main_THM_B}]
Let $F^+, F^-$  be as in Theorem~\ref{T:Main_THM_B}, and assume that there exists a cohomogeneity one Alexandrov space with group diagram $(G, H, K^{-}, K^{+})$ such that the classifying spaces $\B H$, $\B K^\pm$ are  Sullivan spaces  and  the action on $X$  is not Cohen--Macaulay. By contradiction, suppose that one of the following cases occurs:
\begin{itemize}
\item  [(i)]
$\rank K^-=\rank H=\rank K^+$, or
\item [(ii)] $K^{-}/H\notin \{W^{7}_{p, q}/\Gamma, B^{13}, M^\textrm{even}\}$, or
\item [(iii)] $K^{+}/H\notin \{W^{7}_{p, q}/\Gamma, B^{13}, M^\textrm{even}\}$.
\end{itemize}
By Theorem~\ref{T:Main_THM_A}, we immediately rule out Item (i). Therefore, $\rank H<\max\{\rank K^{-}, \rank K^+\}$. If Item (ii) happens, then we deduce that, by the classification of positively curved homogeneous spaces and by Proposition~\ref{P:non-simply-connected_homogeneous_space_Rationally_Nilpotent}, $K^{-}/H$ has to be rationally an odd--dimensional sphere. By Lemma~\ref{L:rational_sphere_surjectivity}, the morphism $H^{*}(\B \iota^{-})$ is surjective and therefore, $\im H^{*}(\B\iota^-)+\im H^{*}(\B\iota^+)=H^*(\B H, \mathbb{Q})
$. Again it follows from Theorem~\ref{T:Main_THM_A} that the action is Cohen--Macaulay contradicting our assumption. A similar argument shows that Item (iii) cannot occur either.

Now let $F^-, F^{+}$ be  positively curved homogeneous spaces   such that  they  can be written as quotients of compact Lie groups whose classifying  spaces are Sullivan spaces. We show that we can construct a cohomogeneity one Alexandrov space with group diagram $(G, H, K^{-}, K^{+})$ and with classifying spaces $\B H$, $\B K^\pm$  being Sullivan spaces such that the action on $X$ is not Cohen--Macaulay if both the following hold:
\begin{itemize}
\item
At most one of the spaces $F^+, F^-$ is even dimensional, and
\item
$F^{\pm}/H\in \{W^{7}_{p, q}/\Gamma, B^{13}, M^\textrm{even}\}$.
\end{itemize}
To this end,  from the list of positively curved homogeneous spaces in \cite{WZ},  we choose compact Lie groups $G_{i}, H_{i}$, $i=1, 2$ such that $F^-=G_{1}/H_{1}$ and $F^+=G_{2}/H_{2}$ satisfying the conditions of the assertion. Let $X=G_{1}/H_{1}\ast G_{2}/H_{2}$,  the spherical join of $G_{1}/H_{1}$ and  $G_{2}/H_{2}$, which by Proposition~\ref{P:Join_Action} is a cohomogeneiy one Alexandrov space with  group diagram
$$(G_{1}\times G_{2}, H_{1}\times H_{2}, G_{1}\times H_{2}, H_{1}\times G_{2}). $$
By the choice of $F^{\pm}$, Lemma~\ref{L:rational_sphere_surjectivity} implies that none of the morphisms $H^*(\B G_1)\to H^*(\B H_1)$ and  $H^*(\B G_2)\to H^*(\B H_2)$ is surjective. Therefore, by Proposition~\ref{P:Join}  the join action on $X$ is not Cohen--Macaulay.
\end{proof}

We conclude this section by the proof of Theorem~\ref{T:Cohom_1_Orbifolds}.

\begin{proof}[Proof of Theorem~\ref{T:Cohom_1_Orbifolds}]
Let $X$ be a closed simply-connected  smooth orbifold and $G$ be a compact connected Lie group
which acts on   $X$ by cohomogeneity one with  a group diagram $(G, H, K^-, K^+)$, where the classifying spaces of the isotropy groups  $H$, $K^-$, and $K^+$ are Sullivan  spaces. By the structure theorem for cohomogeneity one actions on smooth orbifolds \cite{Gonzalez}, the singular normal fibers $K^{\pm}/H$ are diffeomorphic to spherical space forms. Since $K^{\pm}/H$ are in particular positively curved homogeneous spaces, $X$ is equivariantly homeomorphic to a cohomogeneity one Alexandrov space (with the same group diagram). If $\rank K^-=\rank H=\rank K^+$, then the action is Cohen--Macaulay by Theorem~\ref{T:Main_THM_A}. Let $\rank H<\max\{\rank K^{-}, \rank K^+\}$. Hence, at least on the spaces $K^{\pm}/H$ is an odd--dimensional spherical space form which by
Proposition~\ref{P:non-simply-connected_homogeneous_space_Rationally_Nilpotent}, it is rationally an  odd--dimensional sphere. Therefore, by Lemma~\ref{L:rational_sphere_surjectivity}, at least one of the morphisms
\begin{align*}
H^*(\B\iota^-) \colon H^*(\B K^-, \mathbb{Q})\to H^*(\B H, \mathbb{Q}),
\end{align*}
or
\begin{align*}
H^*(\B\iota^+)\colon H^*(\B K^+, \mathbb{Q})\to H^*(\B H, \mathbb{Q}),
\end{align*}
is surjective and in particular $\im H^{*}(\B\iota^-)+\im H^{*}(\B\iota^+)=H^*(\B H, \mathbb{Q})
$. Theorem~\ref{T:Main_THM_A} now finishes the proof.
\end{proof}


\section{On relaxing conditions}\label{S:PC}

This section is devoted to discussing which of the prerequisites of Theorem \ref{T:Main_THM_A} are necessary or can be relaxed. We shall do so by a sequence of examples which show that
\begin{itemize}
\item the condition of classifying spaces being Sullivan might be relaxed to a certain degree.
\item the result becomes wrong when passing to corank $2$ or higher.
\item there are generalized cohomogeneity one spaces for which the sum of the maps induced in cohomology is surjective whereas no single map $H^*(\B \iota^\pm)$ is.
\end{itemize}

\begin{example}[Sullivan spaces]\label{E:Sullivan_Condition}
There might be a certain chance to generalize the characterization from Theorem \ref{T:Main_THM_A} beyond the technical assumptions of ``Sullivan spaces" as the following example shows:  Here, one fibre will be $\mathbb{RP}^2$, which cannot be written as a quotient of compact Lie groups $K^-$, $H$ with $\B K$ and $\B H$ being Sullivan spaces (see Proposition \ref{L:Even_Dim_Sullivan}).

So consider the cohomogeneity one Alexandrov space  $X=\mathbb{RP}^2\ast W^7$ with the group diagram $(S^3\times \SU(3), N_{S^3}(S^1)\times S^1, S^3\times S^1, N_{S^3}(S^1)\times \SU(3))$, where $N_{S^3}(S^1)$ is the normaliser of $S^{1}$ in $S^{3}$. Note that the normal fibers are
$$\frac{S^3\times S^1}{N_{S^3}(S^1)\times S^1}=\mathbb{RP}^2, \qquad \frac{N_{S^3}(S^1)\times \SU(3)}{N_{S^3}(S^1)\times S^1}=W^7.$$
\\

 We claim that the equivariant cohomology of $X$ is Cohen--Macaulay even though this is not implied by Theorem~\ref{T:Main_THM_A}. This is not surprising however since  $\B N_{S^3}(S^1)$ is not a Sullivan space, for $H^*(\B N_{S^3}(S^1), \QQ)=\QQ[t]$ with $\deg t=4$, while $H^*(\B S^1, \QQ)=\QQ[s]$ with $\deg s=2$ (see for example~\cite[Theorem~15.9 and Problem~15.9]{MiSt} and cf.  Proposition~\ref{L_Sullivan_Model_of_Non_Simply_Connected}). To see why $X$ is a Cohen--Macaulay space, we slightly modify the proof of  Proposition~\ref{P:Equivariant_Cohomology} to get the following model for $X_G$  
 \[
 D((\QQ[u, s'], 0)\to (\QQ[t, s], 0)\leftarrow (\QQ[t', v, w], 0)),
 \]
 where
$$ 
\begin{array}[t]{lcr}
u \longmapsto &t &\longmapsfrom  t'\\
s' \longmapsto &s& \\
&-3s^2&\longmapsfrom v\\
&-2s^3&\longmapsfrom w
\end{array}
$$

Since the morphism $(\QQ[u, s'], 0)\to (\QQ[t, s], 0)$ is both surjective and injective, then   \linebreak $D((\QQ[u, s'], 0)\to (\QQ[t, s], 0)\leftarrow (\QQ[t', v, w], 0))$ is isomorphic to $(\QQ[t', v, w], 0)$ as differential graded algebra. Therefore  they have isomorphic cohomology which implies in particular that  $H^*_G(X)$ is a polynomial algebra, generated by three elements, and hence a Cohen--Macaulay ring as desired.
\end{example}

\begin{example}[Corank]\label{E:Corank}
Consider a generalized cohomogeneity one space $X$ given by the group diagram $(T^n, 1, S^1, T^n)$ with $n\geq 2$. Then the $T^n$-equivariant cohomology of $X$ is given by
\begin{align*}
H^*_{T^n}(X)=H^*(\B T^n, \QQ) \oplus H^*(\B S^1, \QQ)=\QQ[x, y_1, \ldots, y_n]/x\cdot y_1=\ldots = x\cdot y_n=0
\end{align*}
where $x$ generates $H^*(\B S^1,\QQ)$ and the $y_i$ generate $H^*(\B T^n, \QQ)$.

We deduce that 
\begin{align*}
\Kdim H^*_{T^n}(X)=\max\{ \Kdim H^*(\B T^n, \QQ), \Kdim H^*(\B S^1, \QQ)\}=n
\end{align*}
(see \cite[Lemma 2.3(i)]{GoMa14}) whilst $\depth H^*_{T^n}(X)=1$. It remains to quickly justify the latter.

We argue that any sequence of two polynomials in 
\begin{align*}
\QQ[x, y_1, \ldots, y_n]/x\cdot y_1=\ldots = x\cdot y_n=0
\end{align*}
cannot be a regular sequence. Indeed, any element $p$ in this ring which is not a zero-divisor necessarily is of the form $a x^{k} + b_1 y_1^{l_1}+ \ldots +b_n y_n^{l_n} + I$ with $I$ the ideal generated by the $xy_i$, with $a\neq 0$, $k\geq 1$ and, without restriction, with $b_1\neq 0$, $l_1\geq 1$.  It follows that any element in $\QQ[x, y_1, \ldots, y_n]/(x\cdot y_1,\ldots, x\cdot y_n, p)$ is a zero-divisor.

This shows that for any $2\leq n=\rank K^+- \rank H$ there exists a generalized cohomogeneity one space $X$ represented by $(K^+,1, K^-, K^+)$ satisfying all the other prerequisites of Theorem \ref{T:Main_THM_A}---the surjectivity of the maps induced in cohomology is trivial---such that the $(G=K^+)$-equivariant cohomology of $X$ is \emph{not} Cohen--Macaulay. Hence the rank condition in Theorem \ref{T:Main_THM_A} is necessary and sharp.
\end{example}

\begin{example}[Surjectivity]\label{E:Surjectivity}
We give an example of a generalized cohomogeneity one space which satisfies all the prerequisites of Theorem \ref{T:Main_THM_A}, to which the second case applies, namely $\rank H< \max \{\rank K^-, \rank K^+\}$,  and which illustrates the following:

It holds that 
\begin{align}\label{E:surjj}
\im H^*(\B\iota^{-})+\im H^*(\B\iota^{+})=H^{*}(\B H, \QQ)
\end{align}
but none of the single maps $\im H^*(\B\iota^{\pm})$ is surjective.

The example is given by the group diagram
\begin{align*}
(Sp(1)\times \Sp(1)\times \Sp(1), S^1\times S^1, S^1 \times \Sp(1)\times \Sp(1), Sp(1)\times S^1\times \Sp(1))
\end{align*}
with group inclusions specified as follows: $K^\pm\subseteq G$ come with the standard blockwise inclusion, the inclusion of $H=S^1[a]\times S^1[b]$ (into $G$ and $K^\pm$) is encoded by the matrix
$(a,b,ab)$.

Its $G$-equivariant cohomology computes as the cohomology of the algebraic double mapping cylinder given by
 \[
 D((\QQ[a,x,y],0)\to (\QQ[a,b],0) \leftarrow (\QQ[z,b,y],0)),
 \]
 where
$$ 
 \begin{array}[t]{lcr}
a\longmapsto & a &\\
&b & \longmapsfrom  b\\
x\longmapsto & b^2 &\\
y\longmapsto &(a+b)^2 &\longmapsfrom y\\
& a^2 & \longmapsfrom z
\end{array}
$$

It is clear that none of the morphisms $H^*(\B \iota^\pm)$ is surjective. Let us justify that Property \eqref{E:surjj} holds.  It is indeed easy to see that any monomial of the form $a^lb^k$ lies in the sum in \eqref{E:surjj}. For this, without restriction we can assume that $l\geq k$---in the case $k\geq l$ we argue in the analog way replacing $H^*(\B \iota^-)$ by $H^*(\B\iota^+)$, etc. Hence, we have that $a^lb^k=(ab)^k\cdot a^{l-k}$. It holds that $ab\in \im H^*(\B \iota^\pm)$ whence so is $(ab)^k$. Since $a \in \im H^*(\B \iota^-)$ it follows that $a^lb^k\in \im H^*(\B \iota^-)$.
\end{example}


\section{Equivariant cohomology and curvature}\label{S:PC}
In this section we show that for cohomogeneity one Alexandrov spaces with $\curv\geq 1$, the notion of Cohen--Macaulay and equivariant formality agree in most cases.

First, let us recall that for a positively curved (effective) cohomogeneity one Alexandrov $G$-space, as in the Riemannian case, the \emph{corank} of the principal isotropy group in  $G$ is at most $2$.  More precisely, we have

\begin{thm}[Rank Lemma]\label{T: Rank_Lemma}\cite{GGZPC}
Let $X$ be a positively curved Alexandrov space with an effective, isometric action of a compact Lie group $G$. If the action is of cohomogeneity one, then the following statements hold.
\vspace{.2cm}
\begin{enumerate}
\item If $X$ is even-dimensional, then the corank of at least one of the non-principal isotropy groups is zero, and the corank of the principal isotropy group is $1$.
\vspace{.2cm}
\item If $X$ is odd-dimensional, then either the corank of at least one of the non-principal isotropy groups is $1$, and the corank of the principal isotropy is $2$, or the coranks of all isotropy groups are zero.
\end{enumerate}
\end{thm}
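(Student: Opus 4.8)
The plan is to combine a parity constraint coming purely from the ranks with a fixed-point argument that is fed by the positive curvature of $X$. Throughout write $n=\dim X$ and recall that for any compact connected Lie group $G$ and closed subgroup $L\subseteq G$ one has $\dim G/L\equiv \rank G-\rank L \pmod 2$: indeed $\dim G=\rank G+2N_G$ where $N_G$ is the number of positive roots, and the same holds for the identity component $L_0$ (using $\rank L=\rank L_0$ and $\dim L=\dim L_0$), so $\dim G/L=\dim G-\dim L\equiv \rank G-\rank L\pmod 2$. Applying this to the principal orbit, whose dimension is $n-1$, gives
\[
n-1\equiv \rank G-\rank H \pmod 2,
\]
so that $\rank G-\rank H$ is odd when $n$ is even and even when $n$ is odd. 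Moreover, by Theorem~\ref{T:ALEX_STRUCTURE} the normal fibres $K^\pm/H$ are positively curved homogeneous spaces, whence the classification in \cite{WZ} yields $\rank K^\pm-\rank H\in\{0,1\}$. Equivalently, for each sign one has $0\le \rank G-\rank K^\pm\le \rank G-\rank H$ and $\rank G-\rank H\le (\rank G-\rank K^\pm)+1$.

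First I would treat the even-dimensional case. Here $\rank G-\rank H$ is odd, hence at least $1$, so the principal isotropy $H$ has positive corank. Let $T\subseteq G$ be a maximal torus. Since $X$ is even-dimensional and positively curved, the Alexandrov analogue of the Weinstein fixed-point theorem (applied to a topological generator of $T$) guarantees that $X^{T}\neq\emptyset$. A point $x\in X^{T}$ satisfies $T\subseteq G_x$, so $\rank G_x=\rank G$; as $G_x$ is conjugate to one of $H,K^-,K^+$, this forces one of these isotropy groups to have corank $0$. It cannot be $H$, whose corank is odd and positive, so some $K^\pm$ has $\rank K^\pm=\rank G$. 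Then $\rank G-\rank H\le (\rank G-\rank K^\pm)+1=1$, and combined with oddness we obtain $\rank G-\rank H=1$ together with $\rank G-\rank K^\pm=0$ for that sign. This is exactly statement~(1).

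For the odd-dimensional case $\rank G-\rank H$ is even. If it equals $0$, then $\rank H=\rank G$, and the sandwich $\rank H\le\rank K^\pm\le\rank G$ forces all three coranks to vanish, which is the second alternative in~(2). It therefore remains to prove that $\rank G-\rank H\le 2$, and simultaneously to exhibit a singular isotropy group of corank exactly $1$; once this is done, parity and the rank restriction on $K^\pm/H$ read off the first alternative. This bound is the crux of the theorem and the step I expect to be the main obstacle: in odd dimensions a torus acting isometrically on a positively curved space need not have any fixed point (the Hopf action on a round sphere being the prototype), so the fixed-point theorem cannot be invoked directly. The strategy I would pursue is to reduce to an even-dimensional situation to which the previous paragraph's argument applies --- either by passing to the space of directions $\Sigma_x X$ at a point $x$ of a singular orbit, which is an even-dimensional positively curved Alexandrov space carrying an isometric $K^\pm$-action, or by passing to the fixed-point component of a suitably chosen circle $S^1\subseteq T$ complementary to a maximal torus of $H$, which is again positively curved of strictly smaller even dimension and carries a residual torus action. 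In either reduction the delicate point --- and the heart of the argument --- is to keep track of the induced isometric action, to verify that the relevant normal spheres remain positively curved homogeneous with the rank restriction of \cite{WZ}, and to check that the corank drops by exactly the amount needed, so that iterating the reduction and applying the even-dimensional fixed-point theorem both bounds $\rank G-\rank H$ by $2$ and produces a non-principal isotropy group of corank $1$.
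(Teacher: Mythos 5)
First, a point of order: the paper does not contain a proof of this statement at all --- it is imported wholesale from the work-in-progress reference \cite{GGZPC} --- so your attempt has to be judged on its own merits rather than against an in-paper argument. On those merits, your parity lemma, the bound $\rank K^\pm-\rank H\le 1$ coming from the classification in \cite{WZ}, and the even-dimensional case are essentially right: granting a fixed-point theorem for isometric torus actions on compact even-dimensional positively curved \emph{Alexandrov} spaces, the chain ``some isotropy group has full rank; by parity it cannot be $H$; hence some $K^\pm$ has corank $0$ and then $\rank G-\rank H\le 1$'' is exactly the standard argument and is complete. Do note, though, that this fixed-point theorem is itself a nontrivial input here: the classical Berger/Weinstein proofs use Killing fields or the exponential map, neither of which exists in this setting, so you must cite the Alexandrov-space version (Synge--Weinstein-type results of Harvey and Searle) rather than treat it as an automatic ``analogue''.

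The odd-dimensional case is a genuine gap, and not merely because you leave it as a plan: both reductions you propose would fail. (i) Fixed-point components of isometric circle or torus actions have \emph{even} codimension (the isotropy representation at a fixed point splits off two-dimensional rotation planes, and the same parity statement propagates to Alexandrov spaces through the spaces of directions), so passing to fixed-point sets preserves the parity of the dimension; you can never reach an ``even-dimensional situation'' from an odd-dimensional one this way, which is the declared goal of your reduction. (ii) A circle $S^1\subseteq T$ ``complementary to a maximal torus of $H$'' may have empty fixed-point set --- precisely the phenomenon you yourself flag --- so that reduction need not even begin; the torus whose fixed-point set is guaranteed nonempty is the maximal torus $T_H$ of $H$ itself, since $H$ fixes every principal point of a normal segment. (iii) The isotropy action of $K^\pm$ on $\Sigma_xX$ is in general \emph{not} of cohomogeneity one (e.g.\ for $\SU(2)$ acting on $\CP^2$, the isotropy action of $K^+=U(1)$ on $\Sigma_y X\approx S^3$ has two-dimensional orbit space), so there is no recursion available there, and a torus-fixed direction would only constrain subgroups of $K^\pm$, not the corank of $H$ in $G$. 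The route that actually works (this is the standard manifold proof of Grove--Wilking--Ziller, which is what must be transplanted) is: pass to the component $F$ of $\Fix(X;T_H)$ through a principal point; the group $N_G(T_H)_0/T_H$ acts on $F$ by cohomogeneity one with finite principal isotropy, with $\rank\bigl(N_G(T_H)_0/T_H\bigr)=\rank G-\rank H$ and with $\dim F\equiv\dim X\pmod 2$; after this corank-preserving reduction the odd case requires a second, independent geometric input, namely the Berger-type statement that an isometric torus action on an odd-dimensional compact positively curved space always has a point whose isotropy has corank at most one in the torus. That statement immediately gives $\rank G-\rank H\le 2$ and produces a singular isotropy group of corank one; it is a separate theorem (known for manifolds via Berger/Grove--Searle and extended to Alexandrov spaces in the literature), not something obtainable by iterating the even-dimensional fixed-point theorem.
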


First  we characterize cohomogeneity one Alexandrov spaces with $\curv\geq 1$ in terms of Euler characteristics and the coranks of the isotropy groups.
Using the additivity properties of the Euler characteristic together with the double mapping cylinder decomposition, and drawing on the fact that  a homogeneous space $G/H$ with $G$ a compact Lie group has positive Euler characteristic   if and only if $\rank H=\rank G$ (see \cite{Wang1949}),
we directly deduce the subsequent proposition from the Rank Lemma \ref{T: Rank_Lemma}.

\begin{prop}\label{prop:corank_Euler}
Let $X$ be a cohomogeneity one Alexandrov space with   $\curv\geq 1$ and with group diagram $(G, H, K^{-}, K^{+})$. Then
\begin{enumerate}
\item $\chi(X)>0$ if and only if either  $X$ is even dimensional, or $X$ is odd dimensional, the coranks of all isotropy groups are zero, and the Euler characteristic of at least one of the normal spaces of directions at the singular orbits is $1$.
\item $\chi(X)=0$ if and only if $X$ is odd dimensional and one of the following applies:
\begin{enumerate}
\item The coranks of all isotropy groups are zero and the Euler characteristics of the normal spaces of directions at singular orbits are both equal to $2$, or,
\item the corank of at least one of the non-principal isotropy groups is $1$, and the corank of the principal isotropy group is $2$.
\end{enumerate}
\item $\chi(X)<0$ if and only if $X$ is odd dimensional, the coranks of all isotropy groups are zero, and  at least one of the two Euler characteristics of the normal spaces of directions at the singular orbits is at least $2$, and the Euler characteristic of the normal space of directions at the other singular orbit is at least $3$, i.e.~without restriction
\begin{align*}
\chi(K^+/H)\geq 3 \qquad\textrm{and}\qquad \chi(K^-/H)\geq 2.
\end{align*}
\end{enumerate}
\end{prop}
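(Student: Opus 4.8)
The plan is to reduce everything to a single additive formula for $\chi(X)$ coming from the double mapping cylinder and then to decide its sign by combining Wang's criterion with the Rank Lemma~\ref{T: Rank_Lemma}. First I would exploit that $X=A^-\cup_{G/H}A^+$, where $A^\pm=G\times_{K^\pm}C(K^\pm/H)$ deformation retracts onto the singular orbit $G/K^\pm$ while the overlap $A^-\cap A^+$ retracts onto $G/H$. Inclusion--exclusion for the Euler characteristic then gives
\[
\chi(X)=\chi(G/K^-)+\chi(G/K^+)-\chi(G/H).
\]
The bundle projections $K^\pm/H\to G/H\to G/K^\pm$ are fibre bundles of compact spaces, so multiplicativity yields $\chi(G/H)=\chi(G/K^\pm)\cdot\chi(K^\pm/H)$. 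Finally I would record that the normal space of directions at a point of the singular orbit $G/K^\pm$ is precisely $K^\pm/H$, since the slice there is the cone $C(K^\pm/H)$ whose space of directions at the apex is $K^\pm/H$; thus the quantities in the statement are the integers $\chi(K^\pm/H)$.

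Next I would reinterpret the Euler characteristics through coranks. By Wang's theorem \cite{Wang1949}, $\chi(G/L)>0$ exactly when $\rank L=\rank G$, i.e.\ the corank of $L$ vanishes, and $\chi(G/L)=0$ otherwise. The decisive structural input is that $K^\pm/H$ are positively curved homogeneous spaces, whence $\rank K^\pm-\rank H\le 1$; equivalently the corank of $K^\pm$ in $G$ equals either the corank of $H$ or one less. I would then split according to the parity of $\dim X$ via the Rank Lemma~\ref{T: Rank_Lemma}. If $X$ is even-dimensional, the corank of $H$ is $1$, so $\chi(G/H)=0$, while at least one $K^\pm$ has corank $0$; the rank bound forces both coranks of $K^\pm$ to lie in $\{0,1\}$, so $\chi(X)=\chi(G/K^-)+\chi(G/K^+)$ is a sum of non-negative terms at least one of which is positive, giving $\chi(X)>0$ and establishing case~(1). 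In the odd-dimensional case in which the corank of $H$ is $2$ and some non-principal isotropy group has corank $1$, the rank bound forces \emph{both} coranks of $K^\pm$ to lie in $\{1,2\}$; hence all three Euler characteristics vanish and $\chi(X)=0$, which is case~(2)(b).

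The remaining, and most delicate, situation is $X$ odd-dimensional with all coranks zero. Here $\chi(G/H)>0$ and $\chi(G/K^\pm)>0$, and writing $f_\pm:=\chi(K^\pm/H)$ (positive integers, since equal rank makes $K^\pm/H$ an even-dimensional positively curved homogeneous space of positive Euler characteristic) the relation $\chi(G/H)=\chi(G/K^\pm)f_\pm$ lets me eliminate $\chi(G/K^\pm)=\chi(G/H)/f_\pm$ and obtain
\[
\chi(X)=\chi(G/H)\Big(\frac{1}{f_-}+\frac{1}{f_+}-1\Big).
\]
Since $\chi(G/H)>0$, the sign of $\chi(X)$ is that of $1/f_-+1/f_+-1$. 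A short arithmetic discussion over the positive integers then finishes the classification: the bracket is positive exactly when $\min\{f_-,f_+\}=1$ (case~(1)); it is zero exactly when $f_-=f_+=2$ (case~(2)(a)); and it is negative exactly when both $f_\pm\ge 2$ with at least one $\ge 3$, i.e.\ after relabelling $f_+\ge 3$ and $f_-\ge 2$ (case~(3)).

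I expect the main obstacle to be bookkeeping rather than a single hard idea: one must verify that the three sign regimes produced by the formula match, without gaps or overlaps, the three cases of the proposition, and in particular that the threshold values $1,2,3$ for $\chi(K^\pm/H)$ come out exactly right. Two supporting points deserve care. First, the identification of the normal space of directions with $K^\pm/H$ (and hence of the statement's ``Euler characteristics of the normal spaces of directions'' with $f_\pm$) must be made precise in the Alexandrov setting. Second, in the mixed-corank odd case one genuinely needs the inequality $\rank K^\pm-\rank H\le 1$ to rule out $\chi(G/K^\pm)>0$; without it one could not conclude $\chi(X)=0$. Since the three sign regimes partition all admissible corank configurations, each ``only if'' direction follows from the corresponding ``if'' direction, yielding the stated biconditionals.
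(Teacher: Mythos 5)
Your proposal is correct and takes essentially the same approach as the paper, whose own (one-sentence) proof cites exactly the ingredients you use: additivity of the Euler characteristic over the double mapping cylinder decomposition, Wang's theorem that $\chi(G/L)>0$ if and only if $\rank L=\rank G$ (and $\chi=0$ otherwise), and the Rank Lemma, together with the bound $\rank K^{\pm}-\rank H\leq 1$ from positive curvature of the normal fibres. Your explicit reduction $\chi(X)=\chi(G/H)\bigl(1/f_-+1/f_+-1\bigr)$ in the equal-rank case and the accompanying arithmetic simply spell out the details the paper leaves to the reader.
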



\begin{prop}\label{prop:CM_ef}
Let $X$ be a cohomogeneity one Alexandrov space with   $\curv\geq 1$.
Then we deduce:
\begin{enumerate}
\item If $X$ is even dimensional, then $X$ is Cohen--Macaulay if and only if it is equivariantly formal.
\item If $X$ is odd dimensional and $\chi(X)\neq0$, then $X$ is Cohen--Macaulay if and only if it is equivariantly formal.
\item If  $X$ is odd dimensional,  $\chi(X)=0$ and $X$ is  equivariantly formal then either there exists a singular normal  fiber homeomorphic to  $\mathbb{CP}^3/\mathbb{Z}_2$ or
$X$ is equivariantly homeomorphic to a smooth manifold.
\end{enumerate}
\end{prop}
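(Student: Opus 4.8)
The plan is to reduce the whole statement to one commutative-algebra fact together with the rank information supplied by the Rank Lemma~\ref{T: Rank_Lemma} and Proposition~\ref{prop:corank_Euler}. Write $R:=H^*(\B G,\QQ)$, which is a polynomial algebra and hence a regular---in particular Cohen--Macaulay---ring of Krull dimension $\rank G$. Equivariant formality of $X$ is exactly the freeness of the finitely generated graded module $M:=H^*_G(X)$ over $R$, and a free module is automatically Cohen--Macaulay, so equivariant formality always implies the Cohen--Macaulay property. Conversely, by the Auslander--Buchsbaum formula a nonzero finitely generated graded $R$-module is free precisely when it is Cohen--Macaulay \emph{and} $\dim_R M=\dim R=\rank G$. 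Since by Lemma~\ref{L: Dim_of_Ring_Instead_of_Module} together with the computation underlying Example~\ref{E:Corank} (equivalently, by Quillen's theorem identifying the Krull dimension with the maximal isotropy rank) one has $\dim_R H^*_G(X)=\max\{\rank K^-,\rank K^+\}$, the entire proposition reduces to deciding when this maximum equals $\rank G$.

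For part (1) the Rank Lemma~\ref{T: Rank_Lemma} gives, in the even-dimensional case, that at least one singular isotropy group has corank zero, so $\max\{\rank K^-,\rank K^+\}=\rank G$; the dimension condition is automatic and equivariant formality coincides with the Cohen--Macaulay property. For part (2) I would invoke Proposition~\ref{prop:corank_Euler}: for odd-dimensional $X$ the hypothesis $\chi(X)\neq 0$ occurs only when the coranks of \emph{all} isotropy groups vanish, whence $\rank K^-=\rank K^+=\rank G$ and the two notions again agree.

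For part (3) equivariant formality forces $\max\{\rank K^-,\rank K^+\}=\rank G$, say $\rank K^-=\rank G$. The condition $\chi(X)=0$ in the odd-dimensional case leaves, by Proposition~\ref{prop:corank_Euler}, only the coranks-all-zero alternative and the corank-$(1,2)$ alternative. The latter would force $\rank H=\rank G-2$ and hence $\rank K^--\rank H=2$, contradicting the bound $\rank K^\pm-\rank H\leq 1$ coming from the classification of positively curved homogeneous spaces; so all coranks vanish and the two normal spaces of directions $K^\pm/H$---which are the links of the cone fibres in Decomposition~\eqref{Eq:decomposition}---are \emph{even-dimensional} positively curved homogeneous spaces with Euler characteristic $2$. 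Running through the classification of Wallach and B\'erard-Bergery \cite{WZ}, and determining which simply-connected models admit a free isometric action with homogeneous quotient of the correct Euler characteristic, I expect to find that the only even-dimensional positively curved homogeneous spaces of Euler characteristic $2$ are the spheres $S^{2n}$ and $\mathbb{CP}^3/\mathbb{Z}_2$. If one of $K^\pm/H$ is $\mathbb{CP}^3/\mathbb{Z}_2$ we land in the first alternative; otherwise both links are linear spheres, the closed cones $C(K^\pm/H)$ are genuine disks carrying a smooth linear $K^\pm$-action, and Decomposition~\eqref{Eq:decomposition} realizes $X$, up to equivariant homeomorphism, as a smooth cohomogeneity one manifold obtained by gluing two disk bundles along $G/H$.

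The main obstacle is the classification step in part (3): establishing that $S^{2n}$ and $\mathbb{CP}^3/\mathbb{Z}_2$ are the \emph{only} even-dimensional positively curved homogeneous spaces with $\chi=2$. This requires computing $\chi$ on the full list (spheres, $\mathbb{CP}^n$, $\mathbb{HP}^n$, $\mathbb{OP}^2$, the flag manifolds $W^6,W^{12},W^{24}$) and then, crucially, analysing the non-simply-connected quotients---using that any isometry of a closed even-dimensional positively curved space is homotopic to the identity and hence, by the Lefschetz fixed point theorem, has a fixed point, so that the only free quaternionic involution survives and produces exactly $\mathbb{CP}^3/\mathbb{Z}_2$. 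A secondary point requiring a little care is that in the sphere case the transitive $K^\pm$-action on $S^{2n}$ is linear, so that the associated cone bundle is genuinely a smooth disk bundle and $X$ is a manifold.
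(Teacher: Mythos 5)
Your reduction of parts (1) and (2), and the rank analysis at the start of part (3), is correct and runs parallel to the paper's proof: where the paper quotes \cite[Proposition~2.9]{GoMa14} (``if one of $H$, $K^\pm$ has full rank, then equivariantly formal $\Leftrightarrow$ Cohen--Macaulay''), you re-derive that equivalence from the graded Auslander--Buchsbaum formula together with the identity $\Kdim H^*_G(X)=\max\{\rank K^-,\rank K^+\}$. That identity is true but is not established by ``the computation underlying Example~\ref{E:Corank}''; you should cite \cite[Lemma~2.3(i)]{GoMa14}, whose Mayer--Vietoris proof applies verbatim to the double mapping cylinder $X_G$ of Proposition~\ref{P:X_G as a double mapping cylinder}. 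Granting that, your commutative-algebra route is a legitimate, slightly more self-contained version of the same step, and your exclusion of alternative (2b) of Proposition~\ref{prop:corank_Euler} via the bound $\rank K^\pm-\rank H\le 1$ matches the paper exactly.

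The genuine gap is the classification step in part (3), which you only announce (``I expect to find\dots'') and whose sketched justification is false. You claim that any isometry of a closed even-dimensional positively curved space is homotopic to the identity and hence, by Lefschetz, has a fixed point. The antipodal map on $S^{2n}$ (degree $-1$) already refutes the first claim; worse, if the claim were true it would forbid \emph{all} free isometric involutions, so neither $\mathbb{RP}^{2n}$ nor $\mathbb{CP}^3/\mathbb{Z}_2$ could exist---the very space your conclusion requires to survive. The correct fixed-point statement is Weinstein's theorem, which applies only to orientation-\emph{preserving} isometries, so free involutions (necessarily orientation-reversing) cannot be excluded this way. The paper instead closes the step as follows: by Synge each even-dimensional fiber has fundamental group in $\{0,\mathbb{Z}_2\}$; by \cite{WZ} the universal cover of a non-simply-connected even-dimensional positively curved homogeneous space is an even sphere, a $\mathbb{CP}^{2n+1}$, or a flag manifold $W^6, W^{12}, W^{24}$ (with $\chi$ equal to $2$, $2n+2$, $6$ respectively); and multiplicativity of $\chi$ in the double cover forces $\chi(\text{cover})=2\cdot 2=4$, singling out $\mathbb{CP}^3$ and hence $\mathbb{CP}^3/\mathbb{Z}_2$. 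Note also that $\mathbb{HP}^3$ has Euler characteristic $4$ as well, so computing $\chi$ on the simply-connected list is not enough by itself: one genuinely needs the statement from \cite{WZ} about which simply-connected models occur as universal covers, which is exactly the input your sketch was missing. Your final observation---that transitive actions on spheres are linear, so sphere fibers make $X$ an equivariantly smooth manifold---is correct and slightly more detailed than the paper's parenthetical remark.
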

\begin{proof}
From \cite[Proposition 2.9]{GoMa14} we recall that when the rank of one of $H$, $K^\pm$ equals the rank of $G$, the cohomogeneity one $G$-action is equivariantly formal if and only if it is Cohen--Macaulay.  Hence Part (1) follows from the Rank Lemma. Part (2) is a direct consequence of Proposition \ref{prop:corank_Euler}, Items (1) and (3).

As for Part (3) we again use \cite[Proposition 2.9]{GoMa14} to see that Proposition \ref{prop:corank_Euler}, Item (2a) applies. First assume  that without restriction $K^+/H$ is odd-dimensional. Hence,  its Euler characteristic vanishes, contradicting  Item (2a) of Proposition \ref{prop:corank_Euler}.
Therefore, both $K^\pm/H$ are even dimensional, and, due to Synge, their fundamental groups are in  $\{0,\mathbb{Z}_2\}$. If both are simply-connected, they are spheres, as their Euler characteristics are $2$  by Item (2a) of Proposition \ref{prop:corank_Euler}.  Hence $X$ is a smooth manifold. If this is not the case,  then due to \cite{WZ}, they are universally covered by flag manifolds, i.e.~spaces of Euler characterstics $\chi(W^6)=\chi(W^{12})=\chi(W^{24})=6$, or by $\mathbb{CP}^{2n+1}$ (for $n\geq 0$) respectively by even dimensional spheres. By the multiplicativity of the Euler characteristic in coverings of finite CW-complexes, we deduce that the only space other than (simply-connected) spheres, which may appear as a normal fibre with Euler characteristic two is $\mathbb{CP}^3/\mathbb{Z}_2$. (In case fibres are spheres, the space is a manifold.)
\end{proof}

\begin{example}
An odd-dimensional cohomogeneity one Alexandrov space with vanishing Euler characteristic and equivariantly formal $G$-action, for example, is given by the suspension $\Susp(\mathbb{CP}^3/\mathbb{Z}_2)$ of  $\mathbb{CP}^3/\mathbb{Z}_2$ with group diagram $(Sp(2),Sp(1)U(1)\cdot \mathbb{Z}_2, Sp(2), Sp(2))$.
\end{example}


\pagebreak

\

\vfill

\begin{center}
\noindent
\begin{minipage}{\linewidth}
\small \noindent \textsc
{Manuel Amann} \\
\textsc{Institut f\"ur Mathematik}\\
\textsc{Differentialgeometrie}\\
\textsc{Universit\"at Augsburg}\\
\textsc{Universit\"atsstra\ss{}e 14 }\\
\textsc{86159 Augsburg}\\
\textsc{Germany}\\
[1ex]
\textsf{manuel.amann@math.uni-augsburg.de}\\
\textsf{www.math.uni-augsburg.de/prof/diff/arbeitsgruppe/amann/}
\end{minipage}
\end{center}


\vspace{5mm}

\begin{center}
\noindent
\begin{minipage}{\linewidth}
\small \noindent \textsc
{Masoumeh Zarei} \\
\textsc{Institut f\"ur Mathematik}\\
\textsc{Differentialgeometrie}\\
\textsc{Universit\"at Augsburg}\\
\textsc{Universit\"atsstra\ss{}e 14 }\\
\textsc{86159 Augsburg}\\
\textsc{Germany}\\
[1ex]
\textsf{masoumeh.zarei@math.uni-augsburg.de}\\
\textsf{www.math.uni-augsburg.de/prof/diff/arbeitsgruppe/zarei/}
\end{minipage}
\end{center}

\end{document}